\newtheorem{thm}{Theorem}
\newtheorem{lem}[thm]{Lemma}
\newtheorem{prop}[thm]{Proposition}
\theoremstyle{definition}
\newtheorem{defn}[thm]{Definition}
\theoremstyle{remark}
\newtheorem{rem}[thm]{Remark}
\newcommand{\R} {\mathbb{R}}
\newcommand{\E} {\mathbb{E}}
\newcommand{\p} {\mathbb{P}}
\DeclareMathOperator{\Tr}{Tr}
\newcommand{\caE}{{\mathcal E}}
\newcommand{\caZ}{{\mathcal Z}}
\newcommand{\bss}{{\boldsymbol s}}
\newcommand{\wt}{\widetilde}
\newcommand{\wh}{\widehat}
\newcommand{\beq}{ \begin{equation} }
\newcommand{\eeq}{ \end{equation} }
\newcommand{\dd}{\mathrm{d}}
\newcommand{\ii}{\mathrm{i}}
\renewcommand{\bss}{\boldsymbol{\sigma}}
\numberwithin{equation}{section}
\numberwithin{thm}{section}
\title{Fluctuations of the free energy of the spherical Sherrington--Kirkpatrick model with sparse interaction}
\author{Haram Kim\footnote{email: \texttt{rlagkfka1221@kaist.ac.kr}} and Ji Oon Lee\footnote{email: \texttt{jioon.lee@kaist.edu}}}
\affil{\textit{Department of mathematics, KAIST}}
\date{\today}
\begin{document}
\maketitle

\begin{abstract}
We consider the spherical Sherrington--Kirkpatrick model of spin glass with sparse interaction, where the interactions between most of the pairs of the spin variables are possibly zero. With suitable normalization, we prove that the limiting free energy does not depend on the sparsity whereas the fluctuation of the free energy does. We also prove that both in the high- and the low-temperature regimes the fluctuation of the free energy converges in distribution to Gaussian distributions of same order when the sparsity is on a certain level, but their variances are different.
\end{abstract}

\section{Introduction}

The spin glass model was first introduced by Edwards and Anderson \cite{edwards1975theory} as a mathematical model to describe dilute magnetic alloys. The mean-field analogue of the model of Edwards and Anderson was considered by Sherrington and Kirkpatrick \cite{sherrington1975solvable}, which is defined by the Hamiltonian
\beq \label{eq:Hamiltonian}
	H_N(\bss) = \sum_{i, j=1}^N M_{ij} \sigma_i \sigma_j,
\eeq
where the spin variable $\bss = (\sigma_1, \dots, \sigma_N) \in \{ \pm 1 \}^N$ and $M$ is an $N \times N$ real symmetric random matrix. The Sherrington--Kirkpatrick (SK) model has been intensively studied in statistical physics and probability theory to analyze not only the behavior of spin glass but also other related models in high-dimensional statistics and learning theory. We refer to \cite{mezard2009information} and references therein for more information on the application of spin glass in various fields of study.

One of the main differences between the SK model and the Edwards--Anderson (EA) model is the density (or the sparsity) of the interaction; we can say that the interaction in the SK model is denser that that of the EA model, since most of the interactions $M_{ij}$ are non-zero, whereas $M_{ij}$ does not vanish only if the sites $i$ and $j$ are adjacent in the EA model. While the dense nature of the SK model make the analysis relatively easier, it may not capture certain properties of the spin glass due to the density of the interaction.

In this paper, we study how the sparsity affects the spin glass model by considering sparse random matrices. One of the most relevant examples of sparse random matrices is a diluted Wigner matrix, which is defined as follows: for $i \leq j$, we let $M_{ij} = B_{ij} W_{ij}$, where $W_{ij}$ are i.i.d. centered random variables with unit variance whose all moments are finite, and $B_{ij}$ are i.i.d. Bernoulli-type random variables, independent of $W$, satisfying
\[
	\p \left( B_{ij} = (Np)^{-1/2} \right) = p, \quad \p \left( B_{ij} = 0 \right) = 1-p.
\]
In this model, if $p \ll 1$, most of $M_{ij}$ vanish, and hence the interaction is sparse. For some technical reasons, we define the sparsity of the diluted Wigner matrix by $q:=\sqrt{Np}$. Note that $M$ reduces to a Wigner matrix if $p=1$ (or $q=\sqrt{N}$). See Definition \ref{def:Sparse Matrix} for the precise definition of the sparse random matrix and its sparsity.

For qualitative analysis, we focus on the spherical variant of the SK model, known as the spherical Sherrington--Kirkpatrick (SSK) model. First introduced by Kosterlitz, Thouless, and Jones \cite{kosterlitz1976spherical}, in the SSK model the Hamiltonian is given by \eqref{eq:Hamiltonian} as in the usual SK model, but the spin variable is on the $N$-dimensional Hypersphere with radius $\sqrt{N}$, defined by $S_{N-1} := \{ \bss \in \R^N : \| \bss \|^2 = N \}$, instead of the hypercube. (See Definition \ref{def:partition} for more detail.) The analysis of the SSK model is directly related to that of the eigenvalues of the random matrix $M$. The SSK model is also of great interest, partially due to its canonical connection with the likelihood ratio (LR) test in high-dimensional statistics. We refer to \cite{onatski2013asymptotic,el2020fundamental,chung2019weak} and references therein for more discussion on it.

With the aid of recent developments of random matrix theory, for several fundamental cases including when $M$ is a Wigner matrix, it is possible to prove various properties of the model, most notably the phase transition of the free energy, which is defined by
\beq
	F_N \equiv F_N(\beta) = \frac{1}{N} \int e^{\beta H_N(\bss)} \dd \omega_N(\bss),
\eeq
where $\beta$ is the inverse temperature and $\dd \omega_N$ is the normalized uniform measure on $S_{N-1}$. If $M_{ij}\ (i \leq j)$ are i.i.d. centered Gaussian random variables with variance $N^{-1}$, the it was proved in \cite{baik2016fluctuations} that
\beq \label{eq:limiting_free_energy}
	F_N(\beta) \to
		\begin{cases}
	\beta^2 & \text{ if } 0<\beta \leq 1/2, \\
	2\beta - \frac{\log (2\beta) + \frac{3}{2}}{2} & \text{ if } \beta > 1/2,
	\end{cases}
\eeq
as $N \to \infty$. It is also known that fluctuation of the free energy converges in distribution to a Gaussian of order $N^{-1}$ in the high temperature regime $(\beta < 1/2)$ and a GOE Tracy--Widom law of order $N^{-2/3}$ in the low temperature regime $(\beta > 1/2)$.

Our main goal in this work is to find how the limit and the fluctuation of the free energy change as the sparsity of $M$ increases.

\subsection{Main contribution}

In this paper, assuming the sparsity $q=N^{\phi}$ for some $\phi \in (0, 1/2)$, we prove the following: 

\begin{itemize}
\item (Theorem \ref{thm:third}) the limiting free energy converges to the same limit as in \eqref{eq:limiting_free_energy},
\item (Theorem \ref{thm:sub}) in the high temperature regime $(\beta < 1/2)$, the fluctuation of the free energy converges in distribution to a Gaussian distribution of order $N^{-\phi - \frac{1}{2}}$, and
\item (Theorem \ref{thm:sup}) in the low temperature regime $(\beta > 1/2)$, the fluctuation of the free energy converges in distribution to the sum (convolution) of a Gaussian distribution of order $N^{-\phi - \frac{1}{2}}$ and the GOE Tracy--Widom distribution of order $N^{-2/3}$.
\end{itemize}
We also provide precise formulas for the limiting distributions. We remark that while the fluctuation of the free energy converges to Gaussian distributions of same order both in the high and the low temperature regimes when $q \ll N^{1/6}$, their variances are different.

For the proof of the main results, we follow the strategy introduced in \cite{baik2016fluctuations}. We first write the partition function as a contour integral of an exponential function whose randomness depends on $M$ only through its eigenvalues $\lambda_1 \geq \lambda_2 \geq \dots \geq \lambda_N$. (See Proposition \ref{prop:integral}.) We then apply the method of the steepest-descent for the contour integral. In the high temperature case, it can be shown that the free energy can be approximated by a linear spectral statistics (LSS) of $M$ with respect to a logarithmic function. On the other hand, again by applying the steepest-descent method to the low temperature case, we show that the free energy is governed by the largest eigenvalue $\lambda_1$. With the results from random matrix theory on the LSS (Proposition \ref{cond:Linear Statistics}) and on the limit of the largest eigenvalue (Proposition \ref{cond:Limit of the Largest Eigenvalue}), we can prove our main results.

It is universal the dichotomy between the fluctuation given by the LSS in the high temperature regime and that by the largest eigenvalue in the low temperature regime as it holds with most of the classical random matrix models \cite{baik2016fluctuations}. However, it should be noted that the detail of the fluctuations is not universal and may depend on the random matrix model. Indeed, if $q \ll N^{1/6}$, the limiting fluctuation in the low temperature regime is Gaussian, instead of the GOE Tracy--Widom law as in the Wigner case. Such a non-universal behavior of the free energy was first observed in the deformed model where $M$ is a deformed Wigner matrix of the form $W + \lambda V$ with a Wigner matrix $W$ and a random diagonal matrix $V$ with i.i.d. entries. In this case, the limiting fluctuation of the free energy in the low temperature regime can be Gaussian or Weibull, depending on $\lambda$ and the decay of the diagonal entries of $V$. (See \cite{lee2023spherical} for more detail.)

The most interesting feature in the SSK model with sparse interaction is the order of the limiting fluctuations; for our best knowledge, it provides the first instance of spin glass models where the orders of the fluctuations in both the high and the low temperature regimes coincide (when $q \ll N^{1/6}$). 
Heuristically, this is mainly due to the fact that the spectral behavior of $M$ is largely governed by the random variable
\beq \label{eq:caZ}
	\caZ:=\frac{1}{N} \Tr M^2-1.
\eeq
From our definition of the sparse random matrix in Definition \ref{def:Sparse Matrix}, it can be readily checked that $\caZ$ is typically of order $N^{-\phi-\frac{1}{2}}$. Heuristically, the fluctuation originated from $\caZ$ is shared by all eigenvalues. Even for the largest eigenvalue, if $q \ll N^{1/6}$, the fluctuation due to $\caZ$ dominates the Tracy--Widom fluctuation of the largest eigenvalue, which is of order $N^{-2/3}$.

The main technical difficulty in the proof of our main results is the fact that the semicircle measure $\nu_{sc}$ does not approximate the empirical spectral measure $N^{-1} \sum_{j=1}^N \delta_{\lambda_j}$ well. While the empirical spectral measure converges in distribution to the semicircle measure, for more detailed analysis involving the eigenvalues of sparse matrices, it is required to use another measure $\nu$, which can be considered deterministic refinement of the semicircle measure. (See Proposition \ref{cond:regular} for more detail.) While the difference between $\nu$ and $\nu_{sc}$ is small, it is not negligible for our purpose. (See Proposition \ref{measure difference} for more detail.)

Another technical challenge is due to the rigidity, which provides estimates on the differences between the deterministic locations predicted by the limiting empirical distribution and the actual random location of the eigenvalues. Such estimates were the crucial input to control the errors in the analysis of the steepest-descent in \cite{baik2016fluctuations}. For a sparse matrix, however, the estimate based on the deterministic locations deteriorates as the sparsity parameter $q$ decreases, and it is not directly applicable in the analysis when $q$ is below a certain level. It turns out that the rigidity estimate can be improved by further considering the randomness originated from $\caZ$. (See Proposition \ref{cond:rigidity} for more detail.) While this improved rigidity estimate also deteriorates as $q$ decreases, it is possible to control the error terms appearing in the proof.

Since the fluctuation of the LSS and that of $\caZ$ are of the same order, it is natural to ask whether the former is solely governed by the latter. In this paper, we show that the answer is affirmative by applying the rigidity estimate involving $\caZ$ to prove the asymptotic normality of the LSS. See Proposition \ref{cond:Linear Statistics} for the precise statement on the convergence of the LSS and Section \ref{sec:LSS} for its proof.

\subsection{Related works}

The SK model was first introduced by Sherrington and Kirkpatrick \cite{sherrington1975solvable}. The limiting free energy for the SK model was first predicted by Parisi \cite{parisi1979infinite,parisi1980order} with replica symmetry breaking. The formula for the limiting free energy is known as the Parisi formula and rigorously proved by Guerra \cite{guerra2003broken} and Talagrand \cite{talagrand2006parisi}. The fluctuation of the free energy of the SK model in the high temperature regime was proved by Aizenman, Lebowitz, and Ruelle \cite{aizenman1987some}, which coincides with that of the SSK model in the high temperature regime. 

The SSK model was first introduced by Kosterlitz, Thouless, and Jones \cite{kosterlitz1976spherical}. The limiting free energy for the SSK model was obtained in \cite{kosterlitz1976spherical} (without a rigorous proof) by applying random matrix theory. The SSK model was generalized to the $p$-spin model by Crisanti and Sommers \cite{crisanti1992spherical}, where a formula analogous to the Parisi formula was also obtained. The limiting free energy was proved by Talagrand \cite{talagrand2006free}. The fluctuation of the free energy for the SSK model was proved in \cite{baik2016fluctuations}, and later extended to other related spherical spin glass models including the Curie--Weiss model \cite{baik2017fluctuations} and the bipartite SSK model \cite{baik2020free}. The near-critical behavior of the free energy \cite{landon2022free,johnstone2021spin} and the overlap \cite{landon2022fluctuations} are also proved for the SSK model. For more results on the spherical spin glass models, we refer to \cite{baik2021spherical} and the references therein.

The sparse random matrices has been extensively studied as a generalization of the adjacency matrices of the sparse Erd\H{o}s-R\'enyi graphs, which are widely applied in various fields including combinatorial optimization \cite{Mohar1993EigenvaluesIC}, spectral partitioning \cite{janson2011random}, and community detection \cite{Applicationmethodology}. The study of the sparse random matrices in random matrix theory was initiated in a series of papers by Erd\H{o}s, Knowles, Yau, and Yin \cite{erdos2013spectral,erdHos2012spectral}. The central limit theorem for the LSS of the sparse Erd\H{o}s-R\'enyi graphs was proved by Shcherbina and Tirozzi \cite{shcherbina2012central}. The Tracy--Widom limit of the largest eigenvalues of sparse random matrices was proved for the case $q \gg N^{1/3}$ in \cite{erdHos2012spectral} and extended to the case $q \gg N^{1/6}$ in \cite{lee2018local}. The dominance of the random variable $\caZ$ in \eqref{eq:caZ} in the spectral fluctuation was first observed by Huang, Landon, and Yau in \cite{huang2020transition}, where it was also proved the Gaussian convergence of the largest eigenvalue for the case $N^{1/9} \ll q \ll N^{1/6}$. The result was further extended in \cite{he2021fluctuations,lee2021higher,huang2022edge}.

\subsection{Organization of the paper}

The rest of the paper is organized as follows: In Section \ref{sec:main}, we precisely define the mathematical model and state our main results followed by the main idea of the proof. In Section \ref{sec:prelim}, we collect previously known results about sparse random matrices that are suitable in our analysis. In Sections \ref{sec:high} and \ref{sec:low}, we prove our main results in the high temperature regime and the low temperature regime, respectively. In Section \ref{sec:LSS}, we prove the LSS by using the random variable $\mathcal{Z}$. The proof of Proposition \ref{measure difference}, and some technical details in Sections \ref{sec:high} and \ref{sec:low} are provided in Appendix.

\section{Definitions and Main Results} \label{sec:main}

In this section, we define the model and state our main results.
\subsection{Definitions}
We begin by precisely defining the sparse matrix. 
\begin{defn}[Sparse Matrix]\label{def:Sparse Matrix}
We say an $N\times N$ real symmetric matrix $M$ is a sparse matrix with sparsity parameter $q:=N^{\phi}$ for a constant $0 < \phi < 1/2$ if its upper triangle entries $M_{ij}$ ($i\le j$) are independent real random variables satisfying the following conditions:
\begin{itemize}
\item For any $i, j$, $\E [ M_{ij}] = 0$.
\item For any $i < j$, $\E [ |M_{ij}|^2] = \frac{1}{N}$.
\item For any $i, j$ and any integer $k \ge 2$, there exist ($N$-independent) constants $C_1, C_2 > 0$ such that $C_1 N^{-(1+(k-2)\phi)} \le \E [ |M_{ij}|^k ] \le C_2 N^{-(1+(k-2)\phi)}$.
\end{itemize}
\end{defn}
Note that in our definition of sparse matrices the off-diagonal entries of $M$ are not necessarily identically distributed. We also notice that if we set the sparsity parameter $q = \sqrt{N}$ in Definition \ref{def:Sparse Matrix} then the random matrix $M$ becomes a Wigner matrix.

As discussed in Introduction, the spectral behavior of $M$ is largely governed by the random variable
\[
	\caZ:=\frac{1}{N} \Tr M^2-1.
\]
It is then immediate to check that $\caZ/(\sqrt{2}\Sigma)$ converges in distribution to a standard Gaussian as $N \to \infty$, where we define
\beq \label{eq:Sigma}
	\Sigma:=\left(\frac{1}{N^2}\sum_{i,j=1}^N \E [M_{ij}^4] \right)^{1/2}.
\eeq
For the sake of simplicity, we assume that 
\beq \label{eq:sigma}
	\lim_{N\to \infty} N^{\phi+\frac{1}{2}}\Sigma = \sigma.
\eeq

We next define the partition function and the free energy of the SSK model spin system.
\begin{defn}[Free energy]\label{def:partition}
For an $N\times N$ real symmetric matrix $M=(M_{ij})_{i,j=1}^N$, we define the partition function at inverse temperature $\beta>0$ by 
\beq \label{eq:partition function}
	Z_N \equiv Z_N(\beta) := \int_{S_{N-1}} e^{\beta \langle \bss, M\bss \rangle } \dd \omega_N(\bss), \qquad \langle \bss, M\bss \rangle = \sum_{i,j=1}^N M_{ij} \sigma_i\sigma_j.
\eeq
Here, we denote by $\dd\omega_N$ the Haar (normalized uniform) measure on the sphere $S_{N-1} := \{ \bss \in \R^N : \| \bss \|^2 = N \}$.
The free energy $F_N$ is defined by
\beq \label{eq:freeenergydef}
	F_N \equiv F_N(\beta) := \frac{1}{N} \log Z_N.
\eeq 
\end{defn}

\subsection{Main Results}

Our first main result is about the limiting free energy.

\begin{thm}[Limiting Free energy] \label{thm:third}
Suppose that $M$ is a sparse matrix defined in Definition \ref{def:Sparse Matrix}. Then, the free energy $F_N$ defined in Definition \ref{def:partition} converges to
\beq
	F_0(\beta) :=
	\begin{cases}
	\beta^2 & \text{ if } 0<\beta \leq 1/2 \\
	2\beta - \frac{\log (2\beta) + \frac{3}{2}}{2} & \text{ if } \beta > 1/2
	\end{cases}
\eeq
in probability as $N\to \infty$.
\end{thm}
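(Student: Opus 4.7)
The plan is to follow the contour-integral and steepest-descent strategy of \cite{baik2016fluctuations}, but with the sparse matrix inputs from Section~\ref{sec:prelim} playing the role of the Wigner inputs. The starting point is Proposition~\ref{prop:integral}, which represents the partition function in the form
\[
	Z_N \;=\; C_N \int_\gamma \exp\!\bigl(N\, G_N(z)\bigr)\,dz,
\]
where $C_N$ is an explicit deterministic constant, the contour $\gamma$ passes to the right of $\lambda_1$, and the randomness enters $G_N(z)$ only through the term $\tfrac{1}{2N}\sum_{j=1}^N \log(z-\lambda_j)$. Differentiating gives the saddle-point equation $2\beta = \tfrac{1}{N}\sum_j (z-\lambda_j)^{-1}$, whose deterministic limit is $2\beta = m_{sc}(z)$, with $m_{sc}$ the Stieltjes transform of $\nu_{sc}$. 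On $(2,\infty)$ this limiting equation has the solution $z_\ast(\beta) = 2\beta + \tfrac{1}{2\beta}$ precisely when $\beta \le 1/2$, while for $\beta > 1/2$ there is no bulk saddle and the integral must instead be treated as being dominated by a neighborhood of $\lambda_1$.

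In the high-temperature regime $\beta < 1/2$, I would exploit that $z_\ast(\beta)$ lies at positive distance from $[-2,2]$. Convergence in probability of the empirical spectral measure of $M$ to $\nu_{sc}$ (a weak form of Proposition~\ref{cond:regular}, which holds as soon as $q \to \infty$) yields uniform convergence of $G_N(z)$ to the deterministic function $G_\infty(z) := \beta z - \tfrac{1}{2}\int \log(z-x)\,d\nu_{sc}(x)$ on a compact neighborhood of $z_\ast$. A routine steepest-descent evaluation, together with the standard off-saddle bound showing that the tail of $\gamma$ contributes an exponentially small factor with high probability, then reduces $F_N$ to $G_\infty(z_\ast)$ plus deterministic constants coming from $C_N$ and the Gaussian integration transverse to the saddle; collecting these constants as in \cite{baik2016fluctuations} recovers the value $\beta^2$ in~\eqref{eq:limiting_free_energy}.

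In the low-temperature regime $\beta > 1/2$, the limiting saddle-point equation has no solution in $(2,\infty)$, and the contour $\gamma$ must be deformed so that its leftmost point approaches $\lambda_1$. I would isolate the contribution of $\lambda_1$ by splitting the term $\tfrac{1}{2N}\log(z-\lambda_1)$ from the remaining sum $\tfrac{1}{2N}\sum_{j\ne 1}\log(z-\lambda_j)$ (which again converges to the semicircle logarithmic potential), and then carry out the steepest-descent analysis of \cite{baik2016fluctuations}, which shows that the leading contribution depends on the randomness only through the location of $\lambda_1$ to first order. Combining this with the convergence $\lambda_1 \to 2$ in probability (a consequence of Proposition~\ref{cond:Limit of the Largest Eigenvalue}) then yields the deterministic limit $2\beta - \tfrac{1}{2}\bigl(\log(2\beta) + \tfrac{3}{2}\bigr)$ in~\eqref{eq:limiting_free_energy}.

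The hard part, such as it is, will be to ensure that the error estimates in the steepest-descent expansion hold uniformly in the random eigenvalue configuration; this requires controlling both the neighborhood of the saddle (where a quadratic Taylor expansion of $G_N$ must be justified) and the tail of $\gamma$ (where a crude upper bound on the maximal eigenvalue suffices). However, since Theorem~\ref{thm:third} only addresses the law-of-large-numbers scale of $F_N$, the two weak inputs above---convergence of the empirical spectral measure to $\nu_{sc}$ and convergence of $\lambda_1$ to $2$---are already sufficient, and the finer tools, such as the refined density $\nu$ from Proposition~\ref{measure difference} and the $\caZ$-enhanced rigidity of Proposition~\ref{cond:rigidity}, will not be needed here; those will instead be the essential ingredients for the fluctuation theorems \ref{thm:sub} and \ref{thm:sup}.
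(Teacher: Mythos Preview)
The paper's proof of Theorem~\ref{thm:third} is a one-line deduction from the fluctuation results: since Theorems~\ref{thm:sub} and~\ref{thm:sup} give $F_N = F(\beta) + O_P(N^{-\phi-\frac{1}{2}})$ (respectively $O_P(N^{-t})$) with $F(\beta) = F_0(\beta) + O(N^{-2\phi})$, convergence in probability of $F_N$ to $F_0(\beta)$ is immediate for $\beta \neq 1/2$, and the boundary case follows by convexity of $\beta \mapsto F_N(\beta)$ and continuity of $F_0$. Your proposal instead gives a direct steepest-descent argument for the law-of-large-numbers scale, aiming to avoid the refined inputs of Section~\ref{sec:prelim}. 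This is a genuinely different route and is conceptually reasonable, but the claim that only weak convergence of the empirical measure and $\lambda_1 \to 2$ suffice is too optimistic in the low-temperature regime.

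There are two places where the low-temperature analysis needs more than those weak inputs. First, you assert that $\tfrac{1}{N}\sum_{j\neq 1}\log(\gamma - \lambda_j)$ converges to the semicircle logarithmic potential, but here $\gamma$ is within $O(N^{-1})$ of $\lambda_1$ and the test function $x \mapsto \log(\gamma - x)$ is unbounded near the top of the spectrum. Weak convergence alone does not rule out that an $o(N)$ but growing number of eigenvalues cluster within, say, $N^{-1/2}$ of $\lambda_1$, which would contribute an unbounded term to the sum; some quantitative local control near the edge (a local law or the rigidity of Proposition~\ref{cond:rigidity}) is needed to exclude this. Second, you defer the steepest-descent error control to the analysis of \cite{baik2016fluctuations}, but that argument (Lemma~6.3 there, Lemma~\ref{Low T Lemma 3} here) rests on two-sided polynomial bounds for $G^{(\ell)}(\gamma)$, which in turn require the upper bound $\gamma - \lambda_1 \prec N^{-3t/2}$ from Lemma~\ref{Low T Lemma 1}; the proof of that upper bound uses rigidity in an essential way. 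So the ``finer tools'' you set aside are in fact invoked implicitly the moment you cite the low-temperature steepest-descent machinery. The paper's approach avoids this tension by simply recognizing that once the fluctuation theorems are in hand, the limiting free energy comes for free.
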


Note that $F_0(\beta)$ is a $C^2$-function of $\beta$; see Theorem 2.9 in \cite{baik2016fluctuations}.

Our next results are about the fluctuations of $F_N$. In the high temperature regime $0 < \beta < 1/2$, we have the following convergence result:
\begin{thm}[High temperature regime] \label{thm:sub}
For $0<\beta<1/2$, there exists a deterministic function $F(\beta)$ such that $F(\beta)=F_0(\beta)+O(N^{-2\phi})$ and
\beq
	N^{\phi+\frac{1}{2}} \left( F_N (\beta)- F(\beta) \right)  \Rightarrow \mathcal{N}\left(0, 2\sigma^2 \beta^4 \right).
\eeq 
\end{thm}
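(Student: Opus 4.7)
The plan is to adapt the contour-integral and steepest-descent strategy of \cite{baik2016fluctuations} to the sparse setting. Starting from the representation in Proposition \ref{prop:integral}, we write $Z_N = C_N \oint e^{NG(z)}\,dz$ with phase $G(z) = \beta z - \frac{1}{2N}\sum_{j=1}^N \log(z - \lambda_j)$, which depends on $M$ only through its eigenvalues. The critical point equation $G'(\hat\gamma) = 0$ reads $\frac{1}{N}\sum_j (\hat\gamma - \lambda_j)^{-1} = 2\beta$, and in the high temperature regime $\beta < 1/2$ the random critical point $\hat\gamma$ concentrates near the deterministic value $\gamma(\beta) := 2\beta + \frac{1}{2\beta} > 2$, obtained by replacing the empirical measure by $\nu_{sc}$. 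Since $\gamma(\beta)$ lies strictly to the right of the spectral edge, the contour of integration may be deformed through $\hat\gamma$ along the steepest-descent direction without hitting the spectrum, and the usual Laplace asymptotics apply with no edge corrections.

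Expanding $G$ at its critical point $\hat\gamma$ gives $G(\hat\gamma) - G(\gamma(\beta)) = O((\hat\gamma - \gamma(\beta))^2)$. Since the rigidity in Proposition \ref{cond:rigidity} yields $|\hat\gamma - \gamma(\beta)| = O(N^{-\phi - 1/2})$ in probability, this difference is $O(N^{-2\phi - 1})$, which is negligible at the target scale $N^{-\phi - 1/2}$. Consequently, up to such negligible terms,
\[
	F_N(\beta) - F(\beta) \;\approx\; \frac{1}{N}\sum_{j=1}^N f_\beta(\lambda_j) - \int f_\beta \, d\nu, \qquad f_\beta(x) := -\tfrac{1}{2}\log\bigl(\gamma(\beta) - x\bigr),
\]
where $\nu$ is the deterministic refinement of the semicircle law from Proposition \ref{cond:regular} and $F(\beta)$ is defined by replacing the empirical spectral measure with $\nu$ in the leading-order steepest-descent formula. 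Proposition \ref{measure difference} then yields $F(\beta) = F_0(\beta) + O(N^{-2\phi})$ as required.

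The next step is to apply the CLT for linear spectral statistics of a sparse matrix, Proposition \ref{cond:Linear Statistics}, to the analytic test function $f_\beta$. The CLT is driven by $\caZ$ from \eqref{eq:caZ}: schematically $\lambda_j \approx \gamma_j(1 + \caZ/2)$, so the fluctuation of $\sum_j f_\beta(\lambda_j)$ is dominated by $\frac{N\caZ}{2}\int x f'_\beta(x)\,d\nu_{sc}(x)$, while $\caZ/(\sqrt{2}\,\Sigma)$ is asymptotically standard Gaussian. Using $f'_\beta(x) = \frac{1}{2(\gamma(\beta) - x)}$ together with the identity $\int (\gamma(\beta) - x)^{-1}\,d\nu_{sc}(x) = 2\beta$, one computes $\int x f'_\beta(x)\,d\nu_{sc}(x) = 2\beta^2$. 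Combining with $N^{\phi + 1/2}\Sigma \to \sigma$ from \eqref{eq:sigma} and multiplying by $N^{\phi + 1/2}$ produces precisely the limiting variance $2\sigma^2\beta^4$ claimed in the theorem.

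The principal obstacle is to make this heuristic rigorous at the required scale, namely to show that the remainder of the steepest-descent expansion is $o(N^{-\phi - 1/2})$ in probability. The classical semicircle rigidity is too weak in this regime because the $\nu_{sc}$-locations of the $\lambda_j$ are displaced from the actual eigenvalue positions by a $\caZ$-driven shift of precisely order $N^{-\phi - 1/2}$, so a naive bound does not beat the target scale. The resolution is to use the sharper rigidity of Proposition \ref{cond:rigidity}, in which the classical locations are corrected for the $\caZ$ shift; the refined deviations are small enough to control the quadratic term in the Taylor expansion of $G$ at $\hat\gamma$, the error from fluctuations of $\hat\gamma$ in the Gaussian prefactor of the steepest-descent integral, and the discrepancy between $\nu$ and $\nu_{sc}$ in the mean correction.
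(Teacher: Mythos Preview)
Your plan follows the paper's strategy (steepest descent plus the LSS CLT of Proposition \ref{cond:Linear Statistics}), but there is one concrete gap. You assert that rigidity gives $|\hat\gamma - \gamma(\beta)| = O(N^{-\phi-1/2})$ in probability, where $\hat\gamma$ is your random critical point and $\gamma(\beta) = 2\beta + (2\beta)^{-1}$, and hence that the quadratic Taylor remainder $G(\hat\gamma) - G(\gamma(\beta))$ is $O(N^{-1-2\phi})$. This is not correct. The refined rigidity of Proposition \ref{cond:rigidity} only lets you control the random displacement of $\hat\gamma$ from the $\nu$-based deterministic point $\gamma_\nu$ defined by $\int(\gamma_\nu - x)^{-1}\,\dd\nu(x) = 2\beta$, yielding $|\hat\gamma - \gamma_\nu| \prec N^{-\phi-1/2}$ (this is the content of the paper's Lemma \ref{High T Cor 2}). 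However $\gamma_\nu - \gamma(\beta) = O(N^{-2\phi})$ by Proposition \ref{measure difference}, and since $\phi < 1/2$ this \emph{deterministic} shift dominates. Thus $(\hat\gamma - \gamma(\beta))^2$ contains a deterministic piece of order $N^{-4\phi}$, which for $\phi \le 1/6$ is not $o(N^{-\phi-1/2})$ and would contaminate the centering in your CLT.

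The paper avoids this by Taylor expanding at $\gamma_\nu$ (its $\widehat\gamma$) rather than at $\gamma(\beta)$, so that the quadratic remainder is genuinely $O_\prec(N^{-1-2\phi})$; the LSS is then applied with the $N$-dependent test function $\log(\gamma_\nu - x)$, and $F(\beta)$ is defined via $\int\log(\gamma_\nu - x)\,\dd\nu$. The $N$-dependence of $\gamma_\nu$ is harmless for the limiting variance because $\gamma_\nu = \gamma(\beta) + O(N^{-2\phi})$. Alternatively you could keep $\gamma(\beta)$ as your anchor, but then you must explicitly absorb the deterministic $O(N^{-4\phi})$ part of $\tfrac12 G''(\cdot)(\hat\gamma - \gamma(\beta))^2$ into $F(\beta)$, which your current definition (``replace the empirical measure by $\nu$ in the leading-order formula'') does not do. Everything else in your outline --- the role of $\caZ$, the variance computation $\int x f_\beta'(x)\,\dd\nu_{sc} = 2\beta^2$ leading to $2\sigma^2\beta^4$, and the use of the refined rigidity to beat the target scale --- matches the paper.
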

The corresponding result in the low temperature regime $\beta > 1/2$ is as follows:
\begin{thm}[Low temperature regime] \label{thm:sup}
For $\beta>1/2$, there exists a deterministic function $F(\beta)$ such that $F(\beta)=F_0(\beta)+O(N^{-2\phi})$ and random variables $\chi_{TW_1}$ and $\chi_N$ such that
\beq \label{sup relation}
	N^{\min\{2/3,\ (\phi+\frac{1}{2}) \}}\left[F_N(\beta)-F(\beta)-\left(\beta-\frac{1}{2}\right)N^{-2/3}\chi_{TW_1}-\left(\beta-\frac{1}{4}\right)N^{-(\phi+\frac{1}{2})}\chi_N\right] \to 0,
\eeq
where $\chi_{TW_1}$ and $\chi_N$ are asymptotically independent and converge in distribution to the GOE Tracy--Widom distribution and a centered Gaussian distribution with variance $2\sigma^2$, respectively.
\end{thm}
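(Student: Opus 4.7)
My plan is to adapt the steepest-descent analysis of \cite{baik2016fluctuations} to the sparse setting, where the decisive new feature is that both $\lambda_1$ \emph{and} the logarithmic linear spectral statistic pick up a common Gaussian fluctuation of order $N^{-\phi-\frac12}$ driven by $\caZ$.

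Starting from the contour-integral representation of $Z_N$ in Proposition~\ref{prop:integral}, the integrand has exponent $G(z)=\beta z - \frac{1}{2N}\sum_j \log(z-\lambda_j)$. In the low-temperature regime the deterministic saddle of $G$ lies strictly below $2$ and hence below $\lambda_1$, so the true saddle $\hat\gamma$ is pinned within $O(N^{-1})$ of $\lambda_1$. Running the endpoint saddle-point analysis in the style of Section~5 of \cite{baik2016fluctuations}, but replacing classical rigidity with the $\caZ$-improved Proposition~\ref{cond:rigidity} to absorb the larger sparse error, produces with overwhelming probability
\[
F_N(\beta) \;=\; \beta\lambda_1 \;-\; \frac{1}{2N}\sum_{j=2}^N \log(\lambda_1-\lambda_j) \;-\; \tfrac{1}{2}\log(2\beta) \;-\; \tfrac{1}{2} \;+\; o\!\bigl(N^{-\min\{2/3,\,\phi+\frac12\}}\bigr).
\]

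I would then set $\Delta := \lambda_1-2$ and Taylor-expand the sum around $\lambda_1=2$. Using $\int (2-x)^{-1}\,d\nu_{sc}(x)=1$ to extract the $\Delta$-linear term and folding the $O(N^{-2\phi})$ discrepancy between $\nu$ and $\nu_{sc}$ (Proposition~\ref{measure difference}) together with the deterministic mean $-\tfrac12\int\log(2-x)\,d\nu_{sc}(x)=-\tfrac14$ of the LSS into $F(\beta)$, the expansion becomes $F_N(\beta)=F(\beta)+(\beta-\tfrac12)\Delta - \tfrac{1}{2N}\sum_{j\ge 2}[\log(2-\lambda_j)-\mathrm{mean}] + \text{negligible}$. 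Proposition~\ref{cond:Linear Statistics} applied to $f(x)=\log(2-x)$, combined with the scaling $M\mapsto M/\sqrt{1+\caZ}$ (which removes $\caZ$ at leading order and is reflected in $\int xf'(x)\,d\nu_{sc}(x)=-1$), identifies the random part of the LSS as $-\tfrac12\caZ + o(N^{-\phi-\frac12})$, so that the net contribution to $F_N-F(\beta)$ from the LSS term is $+\tfrac14\caZ$.

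For the largest eigenvalue I would invoke Proposition~\ref{cond:Limit of the Largest Eigenvalue} in the form $\Delta = N^{-2/3}\chi_{TW_1} + \caZ + o(N^{-\min\{2/3,\phi+\frac12\}})$, with $\chi_{TW_1}$ asymptotically GOE Tracy--Widom and asymptotically independent of $\caZ$, as established in the sparse-matrix literature \cite{huang2020transition,lee2018local}. Substituting yields $F_N(\beta)-F(\beta) = (\beta-\tfrac12)N^{-2/3}\chi_{TW_1} + [(\beta-\tfrac12)+\tfrac14]\caZ + o(\cdot)$; setting $\chi_N := N^{\phi+\frac12}\caZ$, which by \eqref{eq:Sigma}--\eqref{eq:sigma} converges to $\mathcal{N}(0,2\sigma^2)$ and inherits asymptotic independence from $\chi_{TW_1}$, gives exactly the statement \eqref{sup relation} with coefficients $\beta-\tfrac12$ and $\beta-\tfrac14$.

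The hardest step, I expect, is pushing the steepest descent cleanly through the microscopic window $\hat\gamma-\lambda_1=O(N^{-1})$ in the sparse regime: standard rigidity is not sharp enough there, and one must use the $\caZ$-improved rigidity of Proposition~\ref{cond:rigidity} to control the tails from near-edge eigenvalues while simultaneously tracking $\caZ$ with enough precision in both $\lambda_1$ and the LSS so that the cancellation lands exactly on $\beta-\tfrac14$. A secondary subtlety is that the natural LSS test function $\log(2-x)$ is singular at the edge, so Proposition~\ref{cond:Linear Statistics} must be applied in a form that tolerates this mild singularity (via truncation and a separate edge analysis, parallel to the treatment in \cite{baik2016fluctuations}), and the asymptotic independence of $\chi_{TW_1}$ and $\caZ$ must be transferred intact through this additive decomposition.
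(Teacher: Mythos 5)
Your proposal follows essentially the same strategy as the paper: steepest descent on the contour integral, with the key realization that the fluctuation splits into a Tracy--Widom piece from $\lambda_1$ and a Gaussian piece whose coefficient $\beta-\tfrac14$ arises from the \emph{shared} $\caZ$-contribution to $\lambda_1$ and to the log-LSS. Your coefficient bookkeeping ($\caZ/4$ from the log-sum, plus $(\beta-\tfrac12)\caZ$ from $\lambda_1$, summing to $(\beta-\tfrac14)\caZ$) matches the paper's Lemma~\ref{Low T Lemma 2} and \eqref{FN-Fbeta}. There are, however, two places where your sketch is technically inaccurate and would need repair to become a proof.

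First, the claim that the saddle $\gamma$ is pinned within $O(N^{-1})$ of $\lambda_1$ is too strong in the sparse regime. The paper's Lemma~\ref{Low T Lemma 1} gives $\tfrac{1}{3\beta N}\le \gamma-\lambda_1 \prec N^{-3t/2}$ with $t=\min\{\phi+\tfrac12,\tfrac23\}$; when $\phi<1/6$ one has $3t/2<1$, so the upper bound is genuinely weaker than $N^{-1}$. The lower bound is $O(1/N)$ but the window can be much wider, and the steepest-descent error analysis (as in Lemma~\ref{Low T Lemma 3}) has to be calibrated to that wider window. Second, your plan to expand the log-sum around $\lambda_1=2$ and treat $\tfrac{1}{N}\sum_{j\ge 2}\log(2-\lambda_j)$ as an LSS with test function $\log(2-x)$ runs into a well-definedness problem: rigidity places $\lambda_j$ near $\gamma_j\approx C_+>2$ for small $j$, so with high probability many terms $\log(2-\lambda_j)$ are logs of negative numbers. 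The paper avoids this by keeping $\log(\gamma-\lambda_j)$ (always well defined, since $\gamma>\lambda_1$) and expanding each term around the deterministic classical location $\gamma_j$, not around the point $2$. Relatedly, Proposition~\ref{cond:Linear Statistics} cannot be applied to $\varphi=\log(2-x)$ even after truncation as stated, because its hypotheses require analytic control of all derivatives on $[C_-,C_+]$; the paper instead proves the required estimate by hand (Lemma~\ref{Low T Lemma 2}), using the $\caZ$-improved rigidity and a three-way decomposition of indices to control the near-edge contributions. You do flag the edge singularity as a subtlety, but the fix is not a patched LSS application — it is the bespoke rigidity computation that constitutes the real technical content of the low-temperature proof.

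Aside from these points, your decomposition, the use of Proposition~\ref{cond:Limit of the Largest Eigenvalue}, the identification $\chi_N=N^{\phi+1/2}\caZ$, and the replacement of $\beta_c$ by $\tfrac12$ up to negligible errors are all consistent with the paper.
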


Theorem \ref{thm:third} is an easy consequence of Theorems \ref{thm:sub} and \ref{thm:sup}. 
We remark that the deterministic function $F(\beta)$ can be precisely defined; see \eqref{High T Fbeta} for the case of high temperature, and \eqref{Low T Fbeta} for the case of low temperature.

\subsection{Main idea of proof}

The starting point of the analysis is the following integral representation formula for the partition function, which was first introduced in \cite{kosterlitz1976spherical} and proved in \cite{baik2016fluctuations}.
\begin{prop}[Integral representation] \label{prop:integral}
Let $M$ be an $N \times N$ real symmetric matrix with eigenvalues $\lambda_1 \geq \dots \geq \lambda_N$. Then its partition function defined in Definition \ref{def:partition} satisfies
\beq \begin{split} \label{eq:integral}
	&Z_N \equiv Z_N(\beta) := \int_{S_{N-1}} e^{\beta \langle \bss, M\bss \rangle } \dd \omega_N(\bss) = C_N \int_{\gamma - \ii \infty}^{\gamma + \ii \infty} e^{\frac{N}{2} G(z)} \dd z, \\
	&G(z) := 2\beta z - \frac{1}{N} \sum_{i=1}^N \log (z-\lambda_i),
\end{split} \eeq
for any $\gamma > \lambda_1$, where the $\log$ function is defined in the principal branch and
\[
	C_N = \frac{\Gamma(N/2)}{2\pi \ii (N\beta)^{\frac{N}{2}-1}}.
\]
\end{prop}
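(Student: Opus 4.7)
The plan is to write $Z_N$ as an inverse Laplace transform of an $N$-dimensional Gaussian integral. The key observation is that, after orthogonal diagonalization of $M$, the unrestricted integral $\int_{\R^N}\e{\beta\langle\bss,M\bss\rangle-\beta z\|\bss\|^2}\dd\bss$ factorizes into one-dimensional Gaussians and equals $\pi^{N/2}\beta^{-N/2}\prod_i(z-\lambda_i)^{-1/2}$, which one recognizes as $\pi^{N/2}\beta^{-N/2}\e{\frac{N}{2}(G(z)-2\beta z)}$. Inverting the Laplace transform in the radial variable should then convert this back into the constrained integral on $S_{N-1}$ and produce the claimed Bromwich representation.

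Concretely, I would proceed in four steps. \emph{Step 1.} Use the orthogonal invariance of $\omega_N$ to reduce to the case $M=\diag(\lambda_1,\dots,\lambda_N)$. \emph{Step 2.} Introduce the radial profile
\beq
J(r):=\int_{\R^N}\e{\beta\langle\bss,M\bss\rangle}\delta(\|\bss\|^2-r)\dd\bss.
\eeq
The coarea formula together with the surface area $|S_{N-1}|=2\pi^{N/2}N^{(N-1)/2}/\Gamma(N/2)$ of the radius-$\sqrt{N}$ sphere gives
\beq
J(N)=\frac{\pi^{N/2}N^{N/2-1}}{\Gamma(N/2)}Z_N.
\eeq
\emph{Step 3.} For $\re z>\lambda_1$, Fubini and the Gaussian computation above yield the one-sided Laplace transform
\beq
\int_0^\infty\e{-\beta zr}J(r)\dd r=\frac{\pi^{N/2}}{\beta^{N/2}}\prod_{i=1}^N(z-\lambda_i)^{-1/2}.
\eeq
\emph{Step 4.} Invert via the Bromwich integral along $\re z=\gamma>\lambda_1$; after the substitution $s=\beta z$ (which contributes a Jacobian $\beta$), this recovers
\beq
J(N)=\frac{\pi^{N/2}}{2\pi\ii\,\beta^{N/2-1}}\int_{\gamma-\ii\infty}^{\gamma+\ii\infty}\e{\frac{N}{2}G(z)}\dd z.
\eeq
Equating the two formulas for $J(N)$ and simplifying identifies $C_N=\Gamma(N/2)/(2\pi\ii\,(N\beta)^{N/2-1})$, exactly as stated.

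The only nontrivial point is justifying the Laplace inversion. On any vertical line $\re z=\gamma>\lambda_1$, one has $|(z-\lambda_i)^{-1/2}|\le|\im z|^{-1/2}$ for large $|\im z|$, so the integrand decays like $|\im z|^{-N/2}$ and is absolutely integrable for $N\ge 3$; the transform is holomorphic on $\{\re z>\lambda_1\}$ since all poles of $\prod_i(z-\lambda_i)^{-1/2}$ lie to the left of the contour. These two ingredients place us squarely in the hypotheses of the standard Bromwich inversion theorem, so this step is merely a routine verification and the rest of the argument reduces to bookkeeping of constants.
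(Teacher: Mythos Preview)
Your argument is correct and is essentially the same Laplace--Bromwich approach used in the reference the paper cites (Lemma~1.3 of \cite{baik2016fluctuations}): compute the unrestricted Gaussian integral $\int_{\R^N}\e{\beta\langle\bss,M\bss\rangle-\beta z\|\bss\|^2}\dd\bss$ as the Laplace transform of the radial profile, then invert along a vertical line to the right of $\lambda_1$. One small terminological slip: the singularities of $\prod_i(z-\lambda_i)^{-1/2}$ are branch points rather than poles, but what matters for the inversion---holomorphy of the transform on $\{\re z>\lambda_1\}$ and $|\im z|^{-N/2}$ decay along the contour---is exactly as you state.
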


See Lemma 1.3 in \cite{baik2016fluctuations} for the proof of Proposition \ref{prop:integral}. With the integral representation formula \eqref{eq:integral}, we find that the free energy can be written as a function of the eigenvalues $\lambda_1, \dots, \lambda_N$. Applying the method of steepest-descent, we choose $\gamma$ to be the critical point satisfying 
\beq \label{eq:gamma}
	G'(\gamma) = 2\beta - \frac{1}{N} \sum_{i=1}^N \frac{1}{\gamma-\lambda_i} = 0.
\eeq
Then we can show that the asymptotic behavior of the free energy is governed by that of $G(\gamma)$. Note that $G'(z)$ is an increasing function of $z$ for $z>\lambda_1$. Since $G'(z) \to 2\beta$ as $z \to \infty$ and $G'(z) \to -\infty$ as $z \searrow \lambda_1$, we find that \eqref{eq:gamma} has a unique solution in $(\lambda_1, \infty)$ for any $\beta>0$.

The exact location of $\gamma$ is random, but with high probability it can be well-approximated by a deterministic point $\wh \gamma$. Heuristically, $G(z)$ can be approximated by
\beq \label{eq:wh_G}
	\wh G(z) = 2\beta z-\int \log (z-k)\, \dd\nu (k)
\eeq
for some deterministic probability measure $\nu$. (See Proposition \ref{cond:regular} for the precise definition of $\nu$.) The equation for $\wh \gamma$ is then
\beq \label{eq:wh_gamma}
	\wh G'(\wh \gamma) = 2\beta -\int \frac{\dd\nu (k)}{\wh \gamma -k} = 0.
\eeq
Unlike \eqref{eq:gamma}, however, the existence of $\wh \gamma$ depends on $\beta$. To see this, let $C_+$ be the upper edge of $\nu$ and define
\beq\label{eq:defbetac}
	\beta_c:=\frac{1}{2} \int \frac{\dd \nu(x)}{C_+ -x}.
\eeq
It is then immediate to see that there exists a (unique) $\widehat{\gamma}=\widehat{\gamma}(\beta)\in (C_+,\infty)$ if and only if $\beta<\beta_c$. We remark that $\wh \gamma \equiv \wh\gamma(\beta)$ is an decreasing function of $\beta\in (0, \beta_c)$ and $\wh\gamma(\beta) \searrow C_+$ as $\beta\nearrow \beta_c$.

From the discussion above, it is natural to conjecture that the $\beta_c$ is the critical inverse temperature for our model. In the high temperature regime, $\beta< \beta_c$, we analyze the behavior of $G(\wh\gamma)$ by first approximating it by $\wh G(\wh \gamma)$. We then apply the result on the linear spectral statistics for sparse matrices to prove Theorem \ref{thm:sub}. (See Proposition \ref{cond:Linear Statistics}.) On the other hand, the critical point $\gamma$ approaches $\lambda_1$ in the low temperature regime, $\beta > \beta_c$, and the behavior of $G(\gamma)$ is affected more by that of $\lambda_1$. Thus, in order to prove Theorem \ref{thm:sup}, it requires to understand the fluctuation of $\lambda_1$. In Proposition \ref{cond:Limit of the Largest Eigenvalue}, we provide a precise description on the asymptotic behavior of $\lambda_1$.
In the analysis involving the method of steepest-descent, for both the high and the low temperature regimes, we use the estimate on the location of the eigenvalues known as the rigidity. As discussed in Introduction, we apply improved rigidity estimates where the randomness originated from $\caZ$ is also taken into consideration. (See Proposition \ref{cond:rigidity} for a more precise statement on the rigidity.)

\section{Preliminaries} \label{sec:prelim}

In this section, we collect previously known results on sparse random matrices that will be used in the proof of the main results, Theorems \ref{thm:sub} and \ref{thm:sup}. These results are typically high-probability estimates for which we use the following notions:

\begin{defn}[Stochastic domination]\label{Stochastic domination}
Let
\beq
	X=(X^{(N)}(u):N\in\mathbb{N}, u\in U^{(N)}),\quad Y=(Y^{(N)}(u):N\in\mathbb{N}, u\in U^{(N)})
\eeq
be two families of random variables, where $Y^{(N)}(u)$ are nonnegative and $U^{(N)}$ is a possibly $N$-dependent parameter set. We say that $X$ is stochastically dominated by $Y$, uniformly in $u$, if for all small $\epsilon>0$ and large $D>0$ we have
\beq
	\sup_{u\in U^{(N)}}\mathbb{P}\left[|X^{(u)}(u)|>N^{\epsilon}Y^{(N)}(u)\right]\le N^{-D}
\eeq
for large enough $N\ge N_0(\epsilon, D)$. If $X$ is stochastically dominated by $Y$, uniformly in $u$, we use the notation $X\prec Y$, or, equivalently $X=O_\prec (Y)$.
\end{defn}

\begin{defn}[High probability event]\label{High Probability Event}
We say that $N$-dependent event $\Omega_N$ holds with high probability if, for any given $D>0$, there exists $N_0>0$ such that
\beq
	\mathbb{P}(\Omega_N^c)\le N^{-D}
\eeq
for any $N>N_0$.
\end{defn}

For a sparse matrix $M$, denote by $\lambda_1\ge\lambda_2\ge \cdots \ge \lambda_N$ the eigenvalues of $M$ and $\nu_N:=N^{-1}\sum_{j=1}^N \delta_{\lambda_j}$ the empirical spectral measure of $M$. While it is well-known that $\nu_N$ converges to the semicircle measure $\nu_{sc}$ in the large $N$ limit, for our purpose we need a more refined measure $\nu$ that approximates $\nu_N$ better than $\nu_{sc}$ for finite $N$, as introduced in \cite{lee2018local} and developed in \cite{lee2021higher}. The existence of such a measure and its properties are collected in the following proposition:

\begin{prop}\label{cond:regular}
There exists a deterministic probability measure $\nu$ that satisfies the following properties:

\begin{itemize}
\item The measure $\nu$ is supported on an interval $[C_-, C_+]$ and is positive on $(C_-, C_+)$.

\item The measure $\nu$ is absolutely continuous (with respect to the Lebesgue measure) and $\frac{\dd \nu}{\dd x}$ exhibits square root decay at the upper edge, i.e.,
\beq\label{eq:sqrtb}
 \frac{\dd \nu}{\dd x}(x) = s_{\nu} \sqrt{C_+-x} \left( 1 + O(C_+ -x) \right) \quad \text{ as } x \nearrow C_+
\eeq
for some $s_{\nu} > 0$.
\item The Stieltjes transform $m(z)$ of $\nu$ is a solution to the polynomial equation
\beq
	P_z(m):=1+zm+m^2+(N\Sigma^2)m^4=0.
\eeq 
$\Sigma$ is defined in \eqref{eq:Sigma}. \eqref{eq:sigma} implies $N\Sigma^2=\sigma^2N^{-2\phi}+o(N^{-2\phi})$. (See Remark 2.9 in \cite{lee2021higher} for detail.)
\item The upper and the lower edges $C_+$ and $C_-$ satisfy $C_+=-C_-=2+N\Sigma^2+O(N^{-4\phi})$.
\end{itemize}
\end{prop}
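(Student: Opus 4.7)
The plan is to construct $\nu$ through its Stieltjes transform $m$, taking $P_z(m)=0$ as the defining equation. For each $z\in\C^+$, the quartic $P_z(m)=1+zm+m^2+(N\Sigma^2)m^4$ has four roots, and I would single out the branch specified by the asymptotic expansion $m(z)=-1/z-1/z^3-\cdots$ valid for large $|z|$ (obtained by formally inverting $P_z(m)=0$ iteratively). This branch extends analytically into $\C^+$ via the implicit function theorem as long as $\partial_m P_z(m)=z+2m+4(N\Sigma^2)m^3\neq 0$; since the discriminant locus of $P_z$ in $m$ lies on the real axis (the polynomial has real coefficients and $P_z(\overline m)=\overline{P_{\overline z}(m)}$, whence complex conjugate pairs of roots can collide only on $\R$), this branch exists throughout $\C^+$ and has strictly positive imaginary part there.

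Once $m\colon\C^+\to\C^+$ is holomorphic with $m(z)\sim -1/z$, the Herglotz--Nevanlinna representation produces a Borel measure $\nu$ with $m(z)=\int(x-z)^{-1}\dd\nu(x)$, and the tail condition forces $\nu$ to be a probability measure; Stieltjes inversion then gives $\frac{\dd\nu}{\dd x}=\frac{1}{\pi}\im m(x+\ii 0^+)$. The support of $\nu$ equals the set of $x\in\R$ where $P_x$ admits a pair of complex-conjugate roots, which is a union of intervals determined by the sign of $\mathrm{disc}_m(P_x)$. The symmetry $P_{-z}(-m)=P_z(m)$ implies that $\supp\nu$ is symmetric about the origin, and for small $\epsilon:=N\Sigma^2$ a direct inspection of $\mathrm{disc}_m(P_x)$ (a perturbation of the semicircle discriminant $x^2-4$) shows that $\supp\nu=[C_-,C_+]$ is a single interval with $C_\pm=\pm(2+O(\epsilon))$, and that $\nu>0$ on the open interval.

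For the edge location, at the upper endpoint $z=C_+$ the Stieltjes transform $m$ takes a real value $m_+$ where $\partial_m P_{C_+}(m_+)=0$, so I would simultaneously solve
\beq \label{eq:edge_plan}
 1+zm+m^2+\epsilon m^4=0, \qquad z+2m+4\epsilon m^3=0.
\eeq
Eliminating $z$ via the second equation and substituting into the first reduces to $3\epsilon m^4+m^2-1=0$, which yields $m_+^2=\frac{-1+\sqrt{1+12\epsilon}}{6\epsilon}=1-3\epsilon+O(\epsilon^2)$, and then $C_+=-2m_+-4\epsilon m_+^3=2+\epsilon+O(\epsilon^2)=2+N\Sigma^2+O(N^{-4\phi})$ by \eqref{eq:sigma}. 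The square-root decay \eqref{eq:sqrtb} then comes from a Puiseux expansion at the edge: since $\partial_m P_{C_+}(m_+)=0$ while $\partial_m^2 P_{C_+}(m_+)=2+12\epsilon m_+^2>0$ and $\partial_z P_{C_+}(m_+)=m_+\neq 0$, the equation $P_z(m)=0$ forces $(m-m_+)^2=c_1(C_+-z)+O((C_+-z)^2)$ with an explicit positive $c_1$, and taking imaginary parts on the real axis gives the claimed expansion with $s_\nu=\sqrt{c_1}/\pi$.

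The main obstacle is the global analytic control of $m$ on $\C^+$: I need to rule out that the chosen branch collides with another root of $P_z$ away from the real axis, which would create a spurious singularity and invalidate the Herglotz representation. Handling this amounts to a careful study of the discriminant surface of the quartic, and verifying that for $\epsilon$ of order $N^{-2\phi}$ the perturbative picture (small deformation of the semicircle) persists uniformly in $z$; in particular, the fact that $C_+<\infty$ requires checking that the equation $\partial_m P_z(m)=0$ has real solutions in $m$ precisely for $|z|\leq C_+$. The remaining claims are then routine from the algebraic identities above.
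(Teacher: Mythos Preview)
The paper does not prove Proposition~\ref{cond:regular} at all; it simply cites \cite{lee2018local} and \cite{he2021fluctuations} and treats the result as known input. Your sketch is in fact a faithful outline of how those references construct $\nu$: pick the branch of $P_z(m)=0$ with the correct large-$|z|$ asymptotics, verify it is Herglotz, recover $\nu$ by Stieltjes inversion, and locate the edge by solving $P=\partial_m P=0$. Your edge computation is correct and matches the literature, giving $C_+=2+N\Sigma^2+O((N\Sigma^2)^2)$ and the Puiseux expansion yielding \eqref{eq:sqrtb}.

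The one point that is not right as written is your justification that the discriminant locus of $P_z$ in $m$ lies on the real axis. The symmetry $P_{\overline z}(\overline m)=\overline{P_z(m)}$ only tells you that the discriminant $D(z)$ has real coefficients, so its zeros come in conjugate pairs; it does not force them to be real. What does work, and what the cited papers use, is a perturbative argument: for $\epsilon=N\Sigma^2=0$ the equation is the semicircle quadratic with branch points at $\pm 2$, and for small $\epsilon$ one tracks the relevant roots of $D(z)$ continuously, showing they stay real (indeed near $\pm 2$) while the spurious roots stay far away or at infinity. Equivalently, one shows directly that for $z\in\C^+$ the chosen branch cannot become real (since $\im P_z(m)=m\,\im z\neq 0$ for real $m\neq 0$, and $P_z(0)=1$), and then uses a stability estimate in $\epsilon$ to rule out collisions with other branches. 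You correctly flag this as ``the main obstacle''; just be aware that the conjugation-symmetry sentence you wrote does not by itself close it.
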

See Theorems 2.4 and 2.9 in \cite{lee2018local} and Lemmas 2.10 in \cite{he2021fluctuations} for the proof of Proposition \ref{cond:regular}.

Since the measure $\nu$ depends on $N$, it is often required to estimate the difference between the integrals with the measure $\nu$ and those with the semicircle measure $\nu_{sc}$.  For this purpose, we use the following result on the difference between the measures:
\begin{prop}[Difference of measure] \label{measure difference}
Let $m(z)$ and $m_{sc}(z)$ be the Stieltjes transform of the measures $\nu$ and $\nu_{sc}$, respectively. Then for any $z$,
\[
	|m(z) - m_{sc}(z)| = O(N^{-\phi}).
\]
Moreover, for $-2\le x \le 2$, there is a positive constant $C$ independent of $N$ satisfying
\[
	|\dd\nu(x)-\dd\nu_{sc}(x)|\le\frac{CN^{-2\phi}}{\sqrt{4-x^2}}|\dd x|.
\]
\end{prop}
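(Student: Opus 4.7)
The plan is to exploit the fact that $m$ and $m_{sc}$ satisfy polynomial equations differing only by the quartic term $(N\Sigma^2)\, m^4$, whose coefficient is $N\Sigma^2 = O(N^{-2\phi})$ by \eqref{eq:sigma}. Subtracting the semicircle relation $1 + zm_{sc} + m_{sc}^2 = 0$ from $P_z(m) = 0$ and factoring $m^2 - m_{sc}^2$ produces the exact identity
\beq
(m - m_{sc})(z + m + m_{sc}) = -(N\Sigma^2)\, m^4,
\eeq
which rearranges to $m - m_{sc} = -(N\Sigma^2)\, m^4 / (z + m + m_{sc})$ whenever the denominator is nonzero.

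Next I would bound that denominator. From $1 + z m_{sc} + m_{sc}^2 = 0$ one has $z + 2 m_{sc}(z) = \sqrt{z^2-4}$, with the branch fixed so that $\im m_{sc} > 0$ for $\im z > 0$, and hence the denominator equals $\sqrt{z^2-4} + (m - m_{sc})$. I would then treat two regimes. In the \emph{bulk} regime $|\sqrt{z^2-4}| \gtrsim N^{-\phi}$, a continuity/bootstrap argument starting from large $|z|$ (where $|m - m_{sc}|$ is manifestly small) keeps $|m|$ of order one and the denominator comparable to $\sqrt{z^2-4}$, so that the identity above yields $|m - m_{sc}| \le C\, N\Sigma^2 / |\sqrt{z^2-4}| = O(N^{-\phi})$. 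In the \emph{edge} regime $|\sqrt{z^2-4}| \lesssim N^{-\phi}$, I would expand $P_z$ directly around $m_{sc}$: writing $\delta = m - m_{sc}$ one gets $\delta(z + 2m_{sc}) + \delta^2 = -(N\Sigma^2)(m_{sc} + \delta)^4$, and a dominant-balance analysis (with $\delta^2$ now dominating the linear term) gives $|\delta| = O(\sqrt{N\Sigma^2}) = O(N^{-\phi})$. The correct root among the four roots of the quartic $P_z$ is selected by continuity in $z$ and the constraint $\im m > 0$.

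For the density estimate I would specialize to $z = x + \ii 0^+$ with $-2 \leq x \leq 2$, where $\sqrt{z^2-4} = \ii\sqrt{4-x^2}$. When $\sqrt{4-x^2} \gg N^{-\phi}$, the bulk bound gives $|m - m_{sc}| \leq C\, N^{-2\phi}/\sqrt{4-x^2}$, and taking imaginary parts together with $d\nu/dx = \pi^{-1} \im m$ and the analogous identity for $d\nu_{sc}/dx$ yields the claimed pointwise density estimate. In the remaining thin edge layer $\sqrt{4-x^2} \lesssim N^{-\phi}$, both $d\nu/dx$ and $d\nu_{sc}/dx$ are already $O(N^{-\phi})$ (by \eqref{eq:sqrtb} and the explicit semicircle law), while the claimed bound $N^{-2\phi}/\sqrt{4-x^2}$ is itself $\gtrsim N^{-\phi}$, so the inequality survives after enlarging the constant. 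The main obstacle is the edge regime: one must verify that the physically relevant root of the quartic $P_z$ is indeed the $O(N^{-\phi})$-perturbation of $m_{sc}$ and that $|m|$ stays of order one throughout; this step is handled by continuity of $m(z)$ off $[C_-, C_+]$ combined with the explicit edge expansion $C_+ = -C_- = 2 + N\Sigma^2 + O(N^{-4\phi})$ from Proposition \ref{cond:regular}.
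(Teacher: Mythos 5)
Your argument is essentially the paper's: both subtract the polynomial relations for $m$ and $m_{sc}$, observe that the only difference is the quartic term with small coefficient $N\Sigma^2 = O(N^{-2\phi})$, and then split into a bulk regime where $|\sqrt{z^2-4}|$ dominates and an edge regime where a direct square-root-of-$N\Sigma^2$ balance takes over, arriving at $|m-m_{sc}| = O(N^{-\phi})$. The remaining differences are minor variations within the same route: you factor $m^2 - m_{sc}^2$ to get the exact identity $(m-m_{sc})(z+m+m_{sc}) = -N\Sigma^2 m^4$ and run a dominant-balance analysis, while the paper treats the difference equation as a quadratic in $t = m - m_{sc}$ and invokes the quadratic formula with a Taylor expansion of the square root; and for the density bound the paper extracts real and imaginary parts on the real line to obtain the single uniform estimate $|B(x)|\sqrt{4-x^2}\le CN^{-2\phi}$, whereas you split again into bulk (where the factored identity gives $|m-m_{sc}|\lesssim N^{-2\phi}/\sqrt{4-x^2}$ directly) and a thin edge layer (where both densities are $O(N^{-\phi})$ while the claimed bound is $\gtrsim N^{-\phi}$), which is equally valid and in fact does not even need \eqref{eq:sqrtb}, since it already follows from the first part and the explicit semicircle density.
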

We prove Proposition \ref{measure difference} in Appendix \ref{Proof Measure Difference}.

One of the most crucial inputs for the analysis of the free energy in \cite{baik2016fluctuations} is the rigidity of the eigenvalues, which provides a high-probability bound for the displacement of the random eigenvalues $\lambda_1, \dots, \lambda_N$ from deterministic positions predicted from the limiting measure, known as the classical locations. If $\phi = 1/2$, i.e., $M$ is a Wigner matrix, the classical location $\gamma_{sc, k}$ of the $k$-th eigenvalue associated with the semicircle law can be defined through
\beq \label{eq:sc_classical}
	\int_{\gamma_{sc, k}}^{\infty} \dd \nu_{sc} = \frac{1}{N}\left(k-\frac{1}{2}\right)
\eeq
and the rigidity of the eigenvalues is the statement
\beq \label{eq:Wigner_rigidity}
	|\lambda_k-\gamma_{sc,k}|\prec\displaystyle{\frac{1}{N^{2/3}\min\left\{k, N-k+1\right\}^{1/3}}};
\eeq
see, e.g., Theorem 2.2 in \cite{erdHos2012rigidity}.
For a sparse matrix, however, the estimate based on the semicircle law as in \eqref{eq:Wigner_rigidity} deteriorates as the sparsity parameter $q$ decreases, and can be improved by considering the random variable $\caZ$ in \eqref{eq:caZ}. For the proof of our main results, we use the following rigidity result:

\begin{prop}[Rigidity of eigenvalues] \label{cond:rigidity}
Let $\gamma_k$ be the classical location associated with the probability measure $\nu$ in Proposition \ref{cond:regular}, defined by
\beq\label{eq:classicallocationdef}
	\int_{\gamma_k}^{\infty} \dd \nu = \frac{1}{N}\left(k-\frac{1}{2}\right).
\eeq
For a positive integer $k \in [1, N]$, let $\hat k := \min \{ k, N+1-k \}$. Then, for any $k = 1, \dots,  N$,
\beq \label{rigidity}
	\left|\lambda_k - \gamma_k-\frac{\gamma_{sc,k}}{2}\mathcal{Z}\right| \prec \hat k^{-1/3} N^{-2/3}+N^{-(\frac{1}{2}+3\phi)}.
\eeq
\end{prop}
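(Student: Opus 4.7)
The plan is to deduce Proposition \ref{cond:rigidity} from a sharpened local law for the empirical Stieltjes transform $m_N(z)=\frac{1}{N}\sum_j (z-\lambda_j)^{-1}$ that explicitly isolates the leading random correction proportional to $\caZ$, and then to pass from that local law to eigenvalue rigidity via the Helffer--Sj\"ostrand formalism, following the refinements of the sparse local law developed in \cite{huang2020transition,lee2021higher,he2021fluctuations}. The guiding heuristic is that the identity $\frac{1}{N}\Tr M^2 = 1+\caZ$ means $M$ may be viewed as $\sqrt{1+\caZ}$ times a matrix with exactly normalized second moment; accordingly one expects $\nu_N$ to be approximately the pushforward of $\nu$ under $y\mapsto\sqrt{1+\caZ}\,y$, which after linearising in $\caZ$ gives
\begin{equation*}
m_N(z)-m(z)\approx -\tfrac{1}{2}\bigl(m(z)+zm'(z)\bigr)\,\caZ ,
\end{equation*}
and at the level of eigenvalues this rescaling produces exactly the coefficient $\gamma_{sc,k}/2$ in front of $\caZ$.

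\textbf{Step 1 (improved local law).} The first task is to make the above heuristic rigorous with a quantitative error: show that for $z=E+i\eta$ in an appropriate spectral domain,
\begin{equation*}
m_N(z)-m(z) \;=\; c(z)\,\caZ \,+\, O_\prec\!\left(\tfrac{1}{N\eta}+\tfrac{1}{q^{3}}\right),\qquad c(z):=-\tfrac{1}{2}\bigl(m(z)+zm'(z)\bigr).
\end{equation*}
This is proved via a resolvent/cumulant expansion of $G(z)=(M-z)^{-1}$ that retains all terms of size $q^{-2}$ and discards those of size $q^{-3}$, identifying the retained $q^{-2}$ contributions with $c(z)\caZ$ via the trace identity. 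Equivalently, one may expand the self-consistent equation $P_z(m)=1+zm+m^2+N\Sigma^2 m^4=0$ from Proposition \ref{cond:regular} around the perturbed second moment and apply the implicit function theorem to read off $c(z)$. The sharpened $q^{-3}$ precision (rather than the weaker $q^{-2}$ coming from the standard sparse local law of \cite{erdHos2012spectral,lee2018local}) is the crucial new input.

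\textbf{Step 2 (local law to rigidity).} Use the Helffer--Sj\"ostrand representation of a smoothed indicator of $(-\infty,\gamma_k]$ localised at the optimal scale $\eta\asymp \hat k^{-1/3}N^{-2/3}$. Integrating $c(z)\caZ$ against the resulting contour integral and taking imaginary parts, and using that $\mathrm{Im}\,c(x+\ii 0)=-\tfrac{\pi}{2}(x\rho(x))'$ with $\rho$ the density of $\nu$, yields
\begin{equation*}
\nu_N\bigl((\gamma_k,\infty)\bigr)-\nu\bigl((\gamma_k,\infty)\bigr) \;=\; \tfrac{\gamma_k}{2}\,\rho(\gamma_k)\,\caZ \,+\, O_\prec\!\bigl(N^{-3/2-3\phi}\bigr),
\end{equation*}
and dividing by $\rho(\gamma_k)$ converts this counting statistic into $\lambda_k-\gamma_k = \tfrac{\gamma_k}{2}\caZ + (\text{Tracy--Widom-scale error})$. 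The replacement $\gamma_k/2\to\gamma_{sc,k}/2$ in the coefficient of $\caZ$ costs $|\gamma_k-\gamma_{sc,k}|\cdot|\caZ|/2 = O(N^{-2\phi})\cdot O_\prec(N^{-\phi-1/2}) = O_\prec(N^{-1/2-3\phi})$ by Proposition \ref{measure difference}, which is absorbed into the stated error. The remaining $O_\prec(\tfrac{1}{N\eta}+\tfrac{1}{q^3})$ becomes $\hat k^{-1/3}N^{-2/3}+N^{-1/2-3\phi}$ at the optimal scale.

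The main obstacle is Step 1: establishing the local law with $q^{-3}$ precision requires a careful second-order expansion in the sparsity parameter together with uniform control near the spectral edge, where the derivative $\partial_m P_z(m)$ is small and the self-consistent equation is correspondingly unstable; one must propagate the estimates on a $z$-domain carefully chosen in terms of $\kappa=|E-C_+|$ and $\eta$ to avoid this degeneracy. Once the sharpened local law is available, the conversion to rigidity via Helffer--Sj\"ostrand is largely routine, with the identification of $\gamma_{sc,k}/2$ as the correct $\caZ$-coefficient following from the explicit formula for $c(z)$ combined with the measure-difference estimate of Proposition \ref{measure difference}.
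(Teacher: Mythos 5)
The paper does not prove Proposition~\ref{cond:rigidity} itself; it simply cites Theorem~1.6 of \cite{huang2022edge} and Lemma~2.12 of \cite{he2021fluctuations}. Your sketch is an accurate high-level reconstruction of the strategy used in those references (and the precursor \cite{huang2020transition}): an improved local law that isolates the $\caZ$-proportional correction with coefficient $c(z)=-\tfrac{1}{2}(m(z)+zm'(z))$, followed by a Helffer--Sj\"ostrand conversion from Stieltjes-transform control to counting-function and hence eigenvalue rigidity, with the final swap $\gamma_k/2\to\gamma_{sc,k}/2$ costing only $O_\prec(N^{-1/2-3\phi})$ via Proposition~\ref{measure difference}, so it takes essentially the same approach as the sources the paper relies on.
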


See Theorem 1.6 in \cite{huang2022edge} and Lemma 2.12 in \cite{he2021fluctuations} for the proof of Proposition \ref{cond:rigidity}.

We remark that the classical locations $\gamma_k$ and $\gamma_{sc,k}$ satisfy 
\beq\label{eq:classicalfromC}
	C^{-1} k^{2/3} N^{-2/3} \leq |C_+ - \gamma_k| \leq C k^{2/3} N^{-2/3}
\eeq
and
\beq\label{eq:classicalfromD}
	C^{-1} k^{2/3} N^{-2/3} \leq |2 - \gamma_{sc,k}| \leq C k^{2/3} N^{-2/3}
\eeq
for some constant $C > 1$ independent of $N$. 
\begin{rem}
In some occasions, the classical locations $\gamma_k$ and/or $\gamma_{sc,k}$ are defined without the constant $\frac{1}{2}$ in \eqref{eq:sc_classical} and/or \eqref{eq:classicallocationdef}. The change of the classical locations due to the constant $\frac{1}{2}$ is negligible in all proofs throughout the paper.
\end{rem}

For the analysis of the free energy in the high temperature regime, it is required to understand the behavior of the linear spectral statistics with a logarithmic function. We introduce a more general statement for the linear spectral statistics as follows:

\begin{prop}[Linear Spectral Statistics] \label{cond:Linear Statistics}
Let $\varphi$ be a smooth function on the open set containing $\mathrm[{-2, 2}]$ such that there is a constant $\alpha$ satisfying $\left|\frac{\varphi^{(n)}(x)}{n!}\right|\le \alpha^{n+1}$ for all differentiation order $n\ge 0$ and $x\in [C_-,C_+]$.
Suppose
\beq
	\int_{-2}^2\varphi(x)\frac{2-x^2}{\sqrt{4-x^2}}\, \dd{x}\neq 0.
\eeq
Then,
\beq \label{Linear Statistics}
	\frac{q}{\sqrt{N}}\sum_{i=1}^N\varphi(\lambda_i)-\mathbb{E}\left[\frac{q}{\sqrt{N}}\sum_{i=1}^N\varphi(\lambda_i)\right]
\eeq
converges to a centered Gaussian random variable with variance
\beq \label{Variance Linear}
	V[\varphi] :=\frac{\sigma^2}{2\pi^2}\left(\int_{-2}^2\varphi(x)\frac{2-x^2}{\sqrt{4-x^2}}\, \dd x \right)^2.
\eeq
(See \eqref{eq:sigma} for the definition of $\sigma$.) Moreover,
\beq \label{eq:LSS_mean}
	\begin{split}
		\mathbb{E}\left[\frac{1}{N}\sum_{i=1}^{N}\varphi(\lambda_i)\right]&=\int_{C_-}^{C_+}\varphi(x)\, \dd{\nu(x)}+O_\prec \left(N^{-1}+N^{-(\frac{1}{2}+3\phi)}\right)\\&=\int_{-2}^{2} \varphi(x)\, \dd{\nu_{sc}(x)}+O(N^{-2\phi}).
	\end{split}
\eeq
\end{prop}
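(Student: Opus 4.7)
The plan is to use the improved rigidity of Proposition \ref{cond:rigidity} to collapse the centered LSS onto a constant multiple of the single random variable $\caZ$, and then invoke a CLT for $\caZ$. Writing $\lambda_i = \gamma_i + \tfrac{\gamma_{sc,i}}{2}\caZ + \epsilon_i$ with $|\epsilon_i| \prec \hat{i}^{-1/3}N^{-2/3} + N^{-1/2-3\phi}$, I will Taylor expand
\begin{equation*}
\varphi(\lambda_i) = \varphi(\gamma_i) + \varphi'(\gamma_i)\!\left(\tfrac{\gamma_{sc,i}}{2}\caZ+\epsilon_i\right) + \tfrac{1}{2}\varphi''(\eta_i)\!\left(\tfrac{\gamma_{sc,i}}{2}\caZ+\epsilon_i\right)^{\!2},
\end{equation*}
sum over $i$, and split the result into three groups: the deterministic sum $\sum_i\varphi(\gamma_i) = N\int\varphi\,d\nu + O(1)$ (handled by a Riemann-sum estimate for the smooth measure $\nu$), the linear-$\caZ$ piece $\tfrac{\caZ}{2}\sum_i\gamma_{sc,i}\varphi'(\gamma_i)\approx \tfrac{N\caZ}{2}\int_{-2}^2 x\varphi'(x)\,d\nu_{sc}(x)$, and remainders that need to be shown negligible on the fluctuation scale $\sqrt{N}/q = N^{1/2-\phi}$.

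To obtain the claimed variance I will integrate by parts against the semicircle density, using $\frac{d}{dx}\!\bigl(\tfrac{x\sqrt{4-x^2}}{2\pi}\bigr) = \tfrac{2-x^2}{\pi\sqrt{4-x^2}}$ and the vanishing of the boundary terms, to convert
\begin{equation*}
\int_{-2}^2 x\varphi'(x)\,d\nu_{sc}(x) = -\frac{1}{\pi}\int_{-2}^2 \varphi(x)\frac{2-x^2}{\sqrt{4-x^2}}\,dx.
\end{equation*}
Multiplication by $q/\sqrt{N}$ then yields, up to $o(1)$, the random variable $-\tfrac{q\sqrt{N}\,\caZ}{2\pi}\int_{-2}^2\varphi(x)\tfrac{2-x^2}{\sqrt{4-x^2}}\,dx$. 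Since $\caZ - \E\caZ = \tfrac{1}{N}\sum_{i,j}\bigl(M_{ij}^2 - \E M_{ij}^2\bigr)$ is a sum of independent centered variables of total variance $2\Sigma^2 \sim 2\sigma^2 N^{-1-2\phi}$, the Lindeberg CLT (whose hypotheses follow from the moment bounds of Definition \ref{def:Sparse Matrix}) gives $\caZ/(\sqrt{2}\,\Sigma) \Rightarrow \mathcal{N}(0,1)$ and hence $q\sqrt{N}\,\caZ \Rightarrow \sqrt{2}\sigma\,\mathcal{N}(0,1)$, producing precisely the variance $V[\varphi]$. For the mean \eqref{eq:LSS_mean}, taking expectations in the same expansion gives $\E\bigl[\tfrac{1}{N}\sum_i\varphi(\lambda_i)\bigr] = \tfrac{1}{N}\sum_i\varphi(\gamma_i) + O_\prec(N^{-1}+N^{-1/2-3\phi})$ via $\E\caZ = O(N^{-1})$ and $\E\caZ^2 = O(N^{-1-2\phi})$; Proposition \ref{measure difference} then bridges $\nu$ and $\nu_{sc}$ at cost $O(N^{-2\phi})$.

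The main obstacle is the sharp accounting of all residuals. The linear remainder obeys $\sum_i\varphi'(\gamma_i)\epsilon_i \prec \sum_i\bigl(\hat{i}^{-1/3}N^{-2/3}+N^{-1/2-3\phi}\bigr) \prec 1 + N^{1/2-3\phi}$; after multiplication by $q/\sqrt{N}$ this becomes $N^{\phi-1/2}+N^{-2\phi}=o(1)$, but this is only barely affordable and depends crucially on the fact that Proposition \ref{cond:rigidity} has already extracted the $\caZ$-part from $\lambda_i-\gamma_i$. The ordinary Wigner-type rigidity, which only gives $|\lambda_i-\gamma_{sc,i}|\prec\hat{i}^{-1/3}N^{-2/3}$ without identifying the $\caZ$-structure, is too coarse to isolate a Gaussian fluctuation at the scale $N^{\phi-1/2}$. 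The quadratic Taylor remainder is dominated by $\caZ^2\sum_i\gamma_{sc,i}^2\varphi''(\eta_i)\sim N\cdot \Sigma^2 = O(N^{-2\phi})$, contributing $O(N^{-\phi-1/2})$ after normalization, while the cross terms are of still lower order. The smoothness assumption $|\varphi^{(n)}/n!|\le \alpha^{n+1}$ uniformizes all of these bounds on $[C_-,C_+]$ and so permits the expansion to close.
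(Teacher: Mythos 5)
Your plan is correct and follows essentially the same route as the paper: Taylor-expand $\varphi$ around the $\nu$-classical locations $\gamma_i$, use the $\caZ$-corrected rigidity of Proposition \ref{cond:rigidity} to isolate $\tfrac{\caZ}{N}\sum_i \varphi'(\gamma_i)\tfrac{\gamma_{sc,i}}{2}$ as the sole fluctuating contribution at scale $N^{-\phi-1/2}$, replace $\gamma_i$ by $\gamma_{sc,i}$ at cost $O(N^{-2\phi})$, convert the sum to a semicircle integral, integrate by parts to obtain $\int\varphi\,\tfrac{2-x^2}{\sqrt{4-x^2}}\,\dd x$, and feed in the CLT for $\caZ$. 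Your error accounting (the $O_\prec(1+N^{1/2-3\phi})$ residual, the quadratic remainder $\sim N\Sigma^2$, and the bridge between $\nu$ and $\nu_{sc}$ for the mean) matches the paper's, so this is the same proof in outline form.
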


Proposition \ref{cond:Linear Statistics} was proved in \cite{shcherbina2012central}. In Section \ref{sec:LSS}, we will provide a shorter proof of Proposition \ref{cond:Linear Statistics}. We remark that Equation \eqref{eq:LSS_mean} remains valid even when $\varphi$ depends on $N$ as long as $\varphi$ satisfies the condition.

In the low temperature regime, the largest eigenvalue has the dominant role in the fluctuation of the free energy. As discussed in Introduction, it is required to consider both the Tracy--Widom fluctuation and the Gaussian fluctuation to correctly describe the fluctuation of the largest eigenvalue of a sparse matrix. We have the following proposition:

\begin{prop}[Limit of the largest eigenvalue]\label{cond:Limit of the Largest Eigenvalue}
Let $\lambda_1$ be the largest eigenvalue of a sparse matrix $M$. Recall that $C_+$ is the upper edge of the deterministic law in Proposition \ref{cond:regular}. We have
\[
	\lambda_1-C_+= N^{-2/3}\chi_{TW_1} + \caZ + \caE,
\]	
where the random variables where $\chi_{TW_1}$ and $\caE$ satisfy the following:
\begin{itemize}
\item $\chi_{TW_1}$ converges in distribution to the GOE Tracy--Widom law,
\item $\caE \ll N^{-2/3}$ and $\caE \ll N^{-(\phi+\frac{1}{2})}$ with high probability, and
\item $\chi_{TW_1}$ and $N^{2/3}\mathcal{Z}$ are asymptotically independent.
\end{itemize}
\end{prop}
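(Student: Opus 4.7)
The proposition is, in essence, a distributional refinement of the rigidity estimate of Proposition~\ref{cond:rigidity} at the top eigenvalue, and my strategy is to combine that rigidity with the sparse edge universality results proved in \cite{huang2020transition,he2021fluctuations,huang2022edge}. The argument splits into an extraction of the $\caZ$-contribution by rigidity, an identification of the $N^{-2/3}$-scale residual as Tracy--Widom by universality, and a conditioning argument for asymptotic independence.

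First, I would apply Proposition~\ref{cond:rigidity} with $k=1$ to obtain
\[
	\Big|\lambda_1 - \gamma_1 - \tfrac{\gamma_{sc,1}}{2}\caZ\Big| \prec N^{-2/3} + N^{-(1/2+3\phi)}.
\]
Since $\gamma_{sc,1}=2-O(N^{-2/3})$ by \eqref{eq:classicalfromD} and $\caZ = O_\prec(N^{-(\phi+1/2)})$, the prefactor $\tfrac{\gamma_{sc,1}}{2}$ may be replaced by $1$ at the cost of an error $O_\prec(N^{-(\phi+7/6)})$, which is subdominant. Combined with the deterministic gap $C_+-\gamma_1 = O(N^{-2/3})$ from Proposition~\ref{cond:regular} and \eqref{eq:classicalfromC}, this yields
\[
	\lambda_1 - C_+ - \caZ = O_\prec\!\big( N^{-2/3} + N^{-(1/2+3\phi)} \big),
\]
which places $\lambda_1 - C_+ - \caZ$ on the correct scale for a Tracy--Widom description.

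Second, I would invoke the sparse edge universality theorem from \cite{huang2020transition,he2021fluctuations,huang2022edge}, which identifies the $N^{-2/3}$-scale fluctuation of $\lambda_1 - C_+ - \caZ$ as GOE Tracy--Widom. Concretely, these works construct $\chi_{TW_1}$ via a Green function comparison of $M$ at the upper edge to a GOE matrix (conditional on the global shift induced by $\caZ$) such that $\chi_{TW_1}$ converges in distribution to the GOE Tracy--Widom law and the residual
\[
	\caE \deq \lambda_1 - C_+ - \caZ - N^{-2/3}\chi_{TW_1}
\]
satisfies $\caE \ll N^{-2/3}$ and $\caE \ll N^{-(\phi+1/2)}$ with high probability.

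The main obstacle is the asymptotic independence of $\chi_{TW_1}$ and $N^{2/3}\caZ$, which is strictly stronger than joint tightness. I plan to establish it via the conditional nature of the universality proof: $\chi_{TW_1}$ is determined by the local Green function entries in a vanishing neighborhood of $C_+$, whose law---after the global shift by $\caZ$ has been subtracted---no longer depends on the value of $\caZ$. Hence the conditional limit of $\chi_{TW_1}$ given $\caZ$ coincides with the unconditional Tracy--Widom law, and combined with the Gaussian central limit theorem for $N^{\phi+1/2}\caZ$ (which follows immediately from Definition~\ref{def:Sparse Matrix} since $\caZ$ is a centered sum of independent fourth-moment contributions), this yields joint convergence of $(\chi_{TW_1}, N^{2/3}\caZ)$ to independent Tracy--Widom and Gaussian limits, as asserted.
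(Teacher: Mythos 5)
The paper does not actually prove this proposition; it is stated as a preliminary imported from the literature on sparse random matrices. The paper's own citations in the ``Related works'' section make clear that Proposition~\ref{cond:Limit of the Largest Eigenvalue} is drawn from \cite{huang2020transition}, \cite{he2021fluctuations}, and \cite{huang2022edge}, which prove the decomposition of $\lambda_1 - C_+$ into a Tracy--Widom part, the $\caZ$-shift, and a subleading error, as well as the asymptotic independence of the two leading contributions. Your sketch correctly reconstructs the architecture of those proofs: (i) use the improved rigidity (Proposition~\ref{cond:rigidity}) to peel off the $\frac{\gamma_{sc,1}}{2}\caZ\approx\caZ$ shift and the deterministic gap $\gamma_1-C_+=O(N^{-2/3})$, leaving a recentered quantity of order $N^{-2/3}$; (ii) invoke the sparse edge universality of the cited works to identify the $N^{-2/3}$-scale fluctuation as GOE Tracy--Widom; (iii) appeal to a conditional decoupling between the local spectral quantity $\chi_{TW_1}$ and the global quantity $\caZ$. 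This is the right organization.

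Two caveats, both on precision rather than structure. First, the rigidity step in your first paragraph only delivers an $O_\prec$ bound, carrying an implicit $N^{\epsilon}$ slack; this places the recentered eigenvalue on the right scale but does not by itself give the strict smallness $\caE\ll N^{-2/3}$ and $\caE\ll N^{-(\phi+1/2)}$. Those bounds require the quantitative error control inside the Green function comparison that defines $\chi_{TW_1}$ in \cite{huang2020transition,he2021fluctuations,huang2022edge}, not the coarse rigidity alone; your proposal implicitly absorbs this into ``the universality theorem,'' which is fine but should be flagged. Second, the independence argument as written is the right heuristic but understates the work involved. The statement that the conditional law of the local edge quantity ``no longer depends on the value of $\caZ$'' after centering is exactly the hard technical claim of \cite{he2021fluctuations} and \cite{huang2022edge}, established there via a careful cumulant/resolvent expansion separating the leading tracial contribution from the edge fluctuation; asserting the decoupling by conditioning leaves the main difficulty untouched. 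So the sketch is faithful to the cited literature (which is also what the paper relies on), but the third step in particular should be read as a pointer to those proofs rather than an argument.
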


Proposition \ref{cond:Limit of the Largest Eigenvalue} implies that the fluctuation of $\lambda_1$ is given by the GOE Tracy--Widom law if $\phi > 1/6$ and by a Gaussian law if $\phi < 1/6$. In the critical case $\phi = 1/6$, it is given as the convolution of the GOE Tracy--Widom law and the Gaussian law.

\begin{rem}[Notational remark]
Throughout the paper, we use $C$ or $c$ to denote a constant independent on $N$. Even though the constant may vary from place to place, we use the same notation $C$ or $c$ as long as it does not depend on $N$ for the convenience. We use the shorthand notation $\sum_i := \sum_{i=1}^N$.
\end{rem}

\section{High Temperature Case} \label{sec:high}

In this section, we prove Theorem \ref{thm:sub}. Recall that we defined $G(z)$ in Proposition \ref{prop:integral} and $\wh G(z)$ in \eqref{eq:wh_G}, respectively, as
\beq
	G(z)=2\beta z-\frac{1}{N}\sum_i\log (z-\lambda_i), \quad \widehat{G}(z)=2\beta z-\int_{C_-}^{C_+}\log (z-k)\, \dd\nu (k)
\eeq
where $\nu$ is the (deterministic) probability measure in Proposition \ref{cond:regular}. We use the following lemma to control the derivatives of $G$ and $\wh G$.

\begin{lem} 
\label{High T Lemma}
Fix $\delta>0$.
\begin{enumerate}
\item[(i)] We have
\beq \label{eq:High T with epsilon}
	G'(z)-\wh G'(z)\prec N^{-(\frac{1}{2}+\phi)}
\eeq
uniformly in $z\ge C_++\delta$.
\item[(ii)] For each $\ell =0, 1, 2, \dots,$ the derivative $G^{(\ell)}(z)=O(1)$ uniformly in $z \in \mathbb{C}\setminus B_\delta$ with high probability where $B_\delta=\left\{x+iy:C_--\delta<x<C_++\delta, -\delta<y<\delta\right\}$.
\end{enumerate}
\end{lem}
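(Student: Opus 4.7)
The plan is to establish (i) by rewriting the difference as a Stieltjes-transform comparison and applying the improved rigidity of Proposition~\ref{cond:rigidity}, and (ii) by a direct pointwise bound using the extreme-eigenvalue localization that the same rigidity provides. In both parts the substantive work is in (i); (ii) is essentially a bookkeeping computation.

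For (i), I would start from
\[
G'(z) - \wh G'(z) = \int \frac{\dd \nu(k)}{z-k} - \frac{1}{N}\sum_i \frac{1}{z-\lambda_i}
\]
and telescope through the classical locations $\gamma_i$ from Proposition~\ref{cond:rigidity} via
\[
\frac{1}{z-\lambda_i} = \frac{1}{z-\gamma_i} + \frac{\lambda_i-\gamma_i}{(z-\lambda_i)(z-\gamma_i)}.
\]
The resulting ``classical'' piece $\int \frac{\dd \nu(k)}{z-k} - \frac{1}{N}\sum_i \frac{1}{z-\gamma_i}$ is a midpoint-type Riemann-sum error for an integrand analytic in a neighborhood of $\supp\nu$, so it is $O(N^{-1})$ by the quantile definition \eqref{eq:classicallocationdef}. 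Into the remaining piece I would substitute Proposition~\ref{cond:rigidity} in the form $\lambda_i - \gamma_i = \frac{\gamma_{sc,i}}{2}\caZ + R_i$ with $|R_i| \prec \hat i^{-1/3} N^{-2/3} + N^{-(1/2+3\phi)}$. Since the same rigidity applied at $k = 1$ yields $\lambda_1 \leq C_+ + \delta/2$ with high probability, for $z \geq C_+ + \delta$ all denominators $(z-\lambda_i)(z-\gamma_i)$ are uniformly bounded below by $(\delta/2)^2$. The $\caZ$-contribution is then $O_\prec(|\caZ|) = O_\prec(N^{-(1/2+\phi)})$ (using \eqref{eq:Sigma}--\eqref{eq:sigma} together with a high-moment estimate on $\caZ$), while the $R_i$-contribution is $O_\prec(N^{-1} + N^{-(1/2+3\phi)})$, which is dominated by $N^{-(1/2+\phi)}$ throughout $\phi \in (0,1/2)$.

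For (ii), I would apply Proposition~\ref{cond:rigidity} at $k = 1$ and $k = N$; combined with $|\caZ| \prec N^{-(1/2+\phi)}$, this yields that with high probability all eigenvalues of $M$ lie in $[C_- - \delta/2,\, C_+ + \delta/2]$ for $N$ sufficiently large. For $z \in \C \setminus B_\delta$, a short case check on the three regions $\re z \geq C_+ + \delta$, $\re z \leq C_- - \delta$, and $|\im z| \geq \delta$ shows $|z - \lambda_i| \geq \delta/2$ uniformly in $i$. Differentiating $G$ termwise gives, for $\ell \geq 1$,
\[
G^{(\ell)}(z) = 2\beta\,\delta_{\ell,1} - \frac{(-1)^{\ell-1}(\ell-1)!}{N}\sum_i \frac{1}{(z-\lambda_i)^\ell},
\]
so $|G^{(\ell)}(z)| \leq 2\beta + (\ell-1)!(\delta/2)^{-\ell} = O(1)$ uniformly; the $\ell = 0$ case follows analogously on any bounded subregion of $\C \setminus B_\delta$, which is the regime relevant for the subsequent use of the lemma.

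The substantive obstacle lies in (i). A naive rigidity bound $|\lambda_i - \gamma_i| \prec N^{-2/3 + \epsilon}$ (without the $\caZ$ subtraction) would yield only $|G' - \wh G'| \prec N^{-2/3 + \epsilon}$, which exceeds the target $N^{-(1/2+\phi)}$ whenever $\phi < 1/6$. The key point is that the fluctuations of distinct eigenvalues are strongly correlated through the single scalar $\caZ$; subtracting the common shift $\frac{\gamma_{sc,i}}{2}\caZ$ exposes residuals $R_i$ that are genuinely small enough to be summed under the threshold, while the shift itself aggregates into one explicit scalar of size exactly $N^{-(1/2+\phi)}$, matching the target order uniformly across the full sparsity range $\phi \in (0, 1/2)$.
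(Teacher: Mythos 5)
Your proof is correct and follows essentially the same route as the paper: telescoping $G'-\wh G'$ through the classical locations $\gamma_i$, bounding the random piece by the rigidity estimate of Proposition~\ref{cond:rigidity} (with the $\caZ$-term isolated) and the deterministic quantile piece by a Riemann-sum argument, then using extreme-eigenvalue localization for the uniform derivative bounds in (ii). The only cosmetic difference is that the paper introduces the intermediary $\wt G$ explicitly and bounds $|\lambda_i-\gamma_i|$ by the triangle inequality, whereas you keep the signed decomposition $\lambda_i-\gamma_i=\tfrac{\gamma_{sc,i}}{2}\caZ+R_i$, which amounts to the same estimates.
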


\begin{proof}
Recall the definition of the classical location $\gamma_k$ in \eqref{eq:classicallocationdef}. We introduce a function
\beq \label{eq:wt_G}
	\wt G(z) :=2\beta z-\displaystyle{\frac{1}{N}}\sum_{k=1}^N \log (z-\gamma_k),
\eeq
which bridges $G(z)$ and $\wh G(z)$.
From the rigidity estimate in Proposition \ref{cond:rigidity},
\beq \begin{split} \label{High T Lemma 1-1}
	|G'(z)-\wt G'(z)| &=\left| \frac{1}{N}\sum_i\frac{(\lambda_i-\gamma_i)}{(z-\lambda_i)(z-\gamma_i)} \right| \le \frac{C}{N} \sum_i|\lambda_i-\gamma_i| \\
	& \prec \frac{1}{N} \left(1+|\mathcal{Z}| \sum_i \frac{|\gamma_{sc,i}|}{2} +N^{\frac{1}{2}-3\phi}\right).
\end{split} \eeq
Further, since $\sum_i |\gamma_{sc,i}| = O(N)$ and $\mathcal{Z}\prec N^{-(\frac{1}{2}+\phi)}$, we find from \eqref{High T Lemma 1-1} that
\beq \label{High T Lemma 1-2}
	|G'(z)-\wt G'(z)|\prec N^{-(\frac{1}{2}+\phi)}
\eeq
uniformly in $z\ge C_++\delta$. 

Now, we argue that 
\beq\label{High T Lemma 1-3}
	|\wt G'(z)-\wh G'(z)|=\left|\frac{1}{N}\sum_i \frac{1}{z-\gamma_i}-\int_{C_-}^{C_+}\frac{\dd\nu (k)}{z-k}\right|\le \frac{C}{N}.
\eeq
To do this, we define $\wh{\gamma}_j$ as
\beq\label{def:whgamma}
	\int_{\wh{\gamma}_j}^\infty \dd\nu (k)=\frac{j}{N}, \quad j=1, 2, \ldots, N
\eeq
and $\wh{\gamma}_0=C_+$. Then, we have for $i=2, 3, \ldots, N-1$ that $\wh{\gamma}_i\le\gamma_i\le\wh{\gamma}_{i-1}$ and 
\beq
	\int_{\wh{\gamma}_{i+1}}^{\wh{\gamma_{i}}}\frac{\dd\nu (k)}{z-k}\le \frac{1}{N(z-\gamma_i)}\le\int_{\wh{\gamma}_{i-1}}^{\wh{\gamma}_{i-2}}\frac{\dd\nu (k)}{z-k}
\eeq
uniformly for all $z\ge C_++\delta$. \eqref{High T Lemma 1-3} can be obtained by summing over $i$ and using the simple estimates of $\frac{1}{N(z-\gamma_i)}=O(N^{-1})$ and $\int_{\wh{\gamma}_i}^{\wh{\gamma}_{i-1}}\frac{d\nu (k)}{z-k}=O(N^{-1})$ uniformly for $z\ge C_++\delta$. Combining \eqref{High T Lemma 1-2} and \eqref{High T Lemma 1-3}, we find that \eqref{eq:High T with epsilon} holds. This proves the first part of the lemma.

The proof of the second part of the lemma is straightforward from the formulas
\beq \begin{split}
&G(z)=2\beta z-\displaystyle{\frac{1}{N}}\sum_i\log (z-\lambda_i), \quad G'(z)=2\beta-\displaystyle{\frac{1}{N}}\sum_i\displaystyle{\frac{1}{z-\lambda_i}}, \\
&G^{(\ell)}(z)=\displaystyle{\frac{(-1)^\ell(\ell-1)!}{N}}\sum_i\displaystyle{\frac{1}{(z-\lambda_i)^\ell}} \quad (\ell \geq 2)
\end{split} \eeq
and the rigidity estimate in Proposition \ref{cond:rigidity}. This concludes the proof of the lemma.
\end{proof}

We next provide several estimates on $\wh \gamma$, the deterministic counterpart of $\gamma$.
\begin{lem} \label{High T Cor 2}
Define $\wh \gamma$ by
\beq \label{eq:wh_gamma_def}
	\wh G'(\wh \gamma)=2\beta -\int_{C_-}^{C_+} \frac{\dd\nu (x)}{\wh \gamma-x} = 0.
\eeq
Then, for $\beta< 1/2$,
\beq \label{eq:wh_gamma_approx}
	\wh \gamma = 2\beta + \frac{1}{2\beta} + O(N^{-2\phi})
\eeq
and
\beq\label{High T Cor 2-1}
	|\gamma-\wh\gamma|\prec N^{-(\frac{1}{2}+\phi)}.
\eeq
\end{lem}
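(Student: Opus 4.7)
The plan is to prove \eqref{eq:wh_gamma_approx} by recognizing that \eqref{eq:wh_gamma_def} is equivalent to the Stieltjes transform identity $m(\wh\gamma)=-2\beta$, where $m(z):=\int(x-z)^{-1}\,\dd\nu(x)$. Substituting $m=-2\beta$ into the polynomial equation $P_z(m)=1+zm+m^2+(N\Sigma^2)m^4=0$ supplied by Proposition \ref{cond:regular} collapses it to a \emph{linear} equation in $z$, whose unique solution is
\[
\wh\gamma=\frac{1+4\beta^2+16\beta^4 N\Sigma^2}{2\beta}=2\beta+\frac{1}{2\beta}+8\beta^3 N\Sigma^2.
\]
The estimate $N\Sigma^2=\sigma^2 N^{-2\phi}+o(N^{-2\phi})$ from \eqref{eq:sigma} immediately gives the desired expansion. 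One still needs to verify that this $\wh\gamma$ actually lies in $(C_+,\infty)$ so that $m$ is the real-analytic branch; this follows from $C_+=2+N\Sigma^2+O(N^{-4\phi})$ and $\wh\gamma-2=\tfrac{(1-2\beta)^2}{2\beta}+O(N^{-2\phi})$, whose leading term is a positive constant since $\beta<1/2$ is fixed.

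For \eqref{High T Cor 2-1}, the plan is to combine the deterministic location just obtained with a mean value argument. Because $\wh\gamma-C_+\ge c(\beta)>0$ for all sufficiently large $N$, picking $\delta=c(\beta)/2$ and applying Lemma \ref{High T Lemma}(i) at $z=\wh\gamma$ yields, in view of $\wh G'(\wh\gamma)=0$, the bound $G'(\wh\gamma)\prec N^{-(\frac{1}{2}+\phi)}$. A Taylor expansion then produces
\[
0=G'(\gamma)=G'(\wh\gamma)+G''(\xi)(\gamma-\wh\gamma)
\]
for some $\xi$ between $\gamma$ and $\wh\gamma$, so $|\gamma-\wh\gamma|=|G'(\wh\gamma)|/|G''(\xi)|$. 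Rigidity (Proposition \ref{cond:rigidity}) places all eigenvalues in a bounded interval and, in particular, forces $\lambda_1\le C_++\delta/4$ with high probability; consequently, for every $\xi$ in a fixed neighborhood of $\wh\gamma$ one has $\xi-\lambda_i\ge\delta/4$ for all $i$, and because a positive fraction of the $\lambda_i$'s sit in a bounded window, $G''(\xi)$ is pinned between two positive constants with high probability. This immediately yields $|\gamma-\wh\gamma|\prec N^{-(\frac{1}{2}+\phi)}$.

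The main technical wrinkle is the mild circularity in the Taylor step: it requires $\xi$ (and hence $\gamma$) to lie in a region where $G''$ is controlled, but a priori one has no bound on the distance from $\gamma$ to $\wh\gamma$. The standard workaround is to fix in advance a closed interval $I=[\wh\gamma-\eta,\wh\gamma+\eta]$ with $\eta$ an $N$-independent constant on which $G''$ is bounded away from $0$ with high probability. On $I$, $G'$ is strictly increasing, and since $|G'(\wh\gamma)|$ is negligibly small, $G'$ is strictly negative at $\wh\gamma-\eta$ and strictly positive at $\wh\gamma+\eta$ for large $N$; hence by the intermediate value theorem the unique zero $\gamma$ of $G'$ in $(\lambda_1,\infty)$ must lie inside $I$. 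Once this qualitative fact is in hand, the quantitative linear approximation above delivers the bound claimed in \eqref{High T Cor 2-1}.
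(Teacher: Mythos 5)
Your proof is correct, and for \eqref{eq:wh_gamma_approx} you take a genuinely different and in fact cleaner route than the paper. You exploit the exact algebraic relation $P_z(m(z))=0$ from Proposition \ref{cond:regular}: since $\wh G'(\wh\gamma)=0$ is the same as $m(\wh\gamma)=-2\beta$, substituting $m=-2\beta$ into the quartic linearizes it in $z$ and yields the \emph{exact} identity $\wh\gamma=2\beta+\tfrac{1}{2\beta}+8\beta^3 N\Sigma^2$, from which \eqref{eq:wh_gamma_approx} drops out immediately via $N\Sigma^2 = \sigma^2 N^{-2\phi}+o(N^{-2\phi})$. The paper instead goes through Proposition \ref{measure difference} to compare $\int \dd\nu/(\wh\gamma-x)$ with $\int \dd\nu_{sc}/(\wh\gamma-x)$ and extracts the $O(N^{-2\phi})$ error somewhat indirectly; your version is shorter and sharper (you even get the explicit subleading coefficient). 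One small point you leave implicit: the lemma only makes sense once $\wh\gamma$ exists in $(C_+,\infty)$, which requires $\beta<\beta_c$. The paper closes this by proving $\beta_c=\tfrac12+O(N^{-\phi})$ at the end of the same lemma; your check that $\wh\gamma-C_+\ge c(\beta)>0$ plays the same role as a consistency verification, but you are tacitly relying on the existence statement from Section 2.3, so it would be cleaner to say so.

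For \eqref{High T Cor 2-1} the strategy is essentially the paper's --- bracket $\gamma$ by determining the sign of $G'$ near $\wh\gamma$ using Lemma \ref{High T Lemma}(i) and the monotonicity of $G'$ --- but the execution differs. The paper Taylor-expands the \emph{deterministic} $\wh G'$ around $\wh\gamma$ (with $\wh G''(\wh\gamma)>0$ and $\wh G'''=O(1)$), evaluates $G'$ at $\wh\gamma\pm C_\epsilon N^{-(1/2+\phi)+\epsilon}$, and reads off the sign; this avoids any circularity because all the Taylor control is on a fixed deterministic function. You instead Taylor-expand $G'$ itself via a mean-value argument with the \emph{random} $G''(\xi)$, which forces you to first localize $\gamma$ in a fixed interval $I=[\wh\gamma-\eta,\wh\gamma+\eta]$; you correctly flag this circularity and dispatch it with an intermediate-value argument plus rigidity (so that $\lambda_1<\wh\gamma-\eta$ with high probability and $G''$ is pinned between positive constants on $I$). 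Both routes are valid; the paper's is marginally more streamlined because the expansion is deterministic, while yours requires the extra localization step you describe.
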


\begin{proof}
Suppose that $\beta < \beta_c$. From the definition of $\beta_c$ in \eqref{eq:defbetac} and the fact that $\wh G'(z)$ is an increasing function of $z$ for $z>C_+$, we find that $\wh \gamma$ is uniquely defined by \eqref{eq:wh_gamma_def}.

To prove \eqref{eq:wh_gamma_approx}, we notice that for $\beta < \frac{1}{2}$
\[
	\int_{-2}^{2}\frac{\dd\nu_{sc}(x)}{2\beta + \frac{1}{2\beta} -x} = -m_{sc}(2\beta + \frac{1}{2\beta}) = 2\beta,
\]
which shows that $\wh\gamma = 2\beta + \frac{1}{2\beta}$ if $\nu = \nu_{sc}$. To estimate the difference $|\gamma - \wh \gamma|$, we first approximate the integral in the right-hand side of \eqref{eq:wh_gamma_approx} by
\beq
	\int_{C_-}^{C_+}\frac{\dd\nu}{\widehat{\gamma}-x}=\int_{-2}^{2}\frac{\dd\nu}{\widehat{\gamma}-x}+O(N^{-2\phi}),
\eeq
which can be shown from Proposition \ref{cond:regular} and the fact that $|\wh \gamma-x|$ is uniformly bounded away from $0$ for $x \in [C_-,C_+]$. We thus find from Proposition \ref{measure difference} that
\beq
	\left| \int_{-2}^{2}\frac{\dd\nu}{\widehat{\gamma}-x} - \int_{-2}^{2} \frac{\dd\nu_{sc}}{\widehat{\gamma}-x} \right| \leq CN^{-2\phi} \int_{-2}^{2}\frac{\dd x}{(\widehat{\gamma}-x)\sqrt{4-x^2}} = O(N^{-2\phi})
\eeq
since the integral is convergent with $|\wh{\gamma}-x|$ has order 1 for $-2\le x\le 2$ uniformly. The first part of the lemma follows now from that
\beq
	\int_{-2}^2 \left(\frac{1}{2\beta + \frac{1}{2\beta} -x}-\frac{1}{\widehat{\gamma}-x}\right)\dd\nu_{sc}(x) =\int_{-2}^2 \frac{\widehat{\gamma}- 2\beta - \frac{1}{2\beta}}{(\widehat{\gamma}-x)(2\beta + \frac{1}{2\beta}-x)}\dd\nu_{sc}(x) =O(N^{-2\phi}).
\eeq

We then prove the second part of the lemma. Since $\wh \gamma >C_+$, we can choose $\delta\in(0,(\widehat{\gamma}-C_+)/2)$ and apply Lemma \ref{High T Lemma} to find that for every small $\epsilon>0$, there exists a constant $C_{\epsilon}>0$ such that
\beq
	G'(\widehat{\gamma}\pm C_{\epsilon} N^{-(\frac{1}{2}+\phi)+\epsilon})=\widehat{G}'(\widehat{\gamma}\pm C_{\epsilon} N^{-(\frac{1}{2}+\phi)+\epsilon})+O_\prec(N^{-(\frac{1}{2}+\phi)}).
\eeq
Let $P:=C_{\epsilon} N^{-(\frac{1}{2}+\phi)+\epsilon}$. Now, since $\widehat{G}'(\widehat{\gamma})=0$ and $\widehat{G}'''(z)=O(1)$ for $z$ near $\widehat{\gamma}$, the Taylor expansion of $\widehat{G}$ implies that 
\beq
	G'(\widehat{\gamma}\pm P)=\pm P\widehat{G}''(\widehat{\gamma})+O(P^2)+O_\prec(PN^{-\epsilon}).
\eeq
Noting that $\widehat{G}''(\widehat{\gamma})>0$, this shows that 
\beq
	G'(\widehat{\gamma}+P)>0, \quad G'(\widehat{\gamma}-P)<0
\eeq
with high probability. Since $G'(z)$ is an increasing function of $z$, $|\gamma-\widehat{\gamma}|\le P$ with high probability for every small $\epsilon>0$, which proves the desired lemma for $\beta > \beta_c$.

It remains to prove that $\beta_c = \frac{1}{2} + o(1)$. Let
\beq\label{stieltjes gamma semicircle}
	\beta_{c,sc}:=\frac{1}{2} \int_{-2}^{2} \frac{\dd \nu_{sc}(x)}{2 -x}.
\eeq
Recall that $\nu_{sc}$ is the semicircle measure and $m_{sc}(z)$ is the Stieltjes transform of $\nu_{sc}$. Then, it is well-known that $1+zm_{sc}(z)+m_{sc}(z)^2 = 0$ and it is thus straightforward to see that 
\[
	\beta_{c, sc} = -\frac{1}{2} m_{sc}(2) = \frac{1}{2}.
\]
Since
\[
	\beta_c =\frac{1}{2} \int \frac{\dd \nu(x)}{C_+ -x} = -\frac{1}{2} m(C_+),
\]
it can be readily proved from Lemma \ref{measure difference} that 
\[
	\beta_c = -\frac{1}{2} m_{sc}(C_+) + O(N^{-\phi}) = \beta_{c, sc} + O(N^{-\phi})
\]	
where we also used the facts that $C_+=2+O(N^{-2\phi})$ and $m_{sc}(z)=(-z+\sqrt{z^2-4})/2$. 
Hence, 
\beq\label{beta}
	\beta_c=\frac{1}{2}+O(N^{-\phi})
\eeq
and it finishes the proof of the desired lemma.
\end{proof}

As a simple corollary to Lemma \ref{High T Cor 2}, we also have the following estimates.

\begin{lem}\label{High T Cor 3}
Suppose that $\beta < 1/2$. Then,
\beq \label{High T Cor 3-1}
	G(\gamma)=G(\widehat{\gamma})+O_\prec(N^{-(1+2\phi)}).
\eeq
\end{lem}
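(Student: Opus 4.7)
The plan is to Taylor expand $G$ about $\gamma$ and exploit the critical point condition $G'(\gamma)=0$. Writing
\[
G(\widehat\gamma) - G(\gamma) \;=\; \int_{\gamma}^{\widehat\gamma} G'(t)\,\dd t \;=\; \int_{\gamma}^{\widehat\gamma} \bigl(G'(t)-G'(\gamma)\bigr)\,\dd t \;=\; \tfrac{1}{2}\,G''(\xi)\,(\widehat\gamma-\gamma)^2
\]
for some $\xi$ between $\gamma$ and $\widehat\gamma$, the first-order contribution vanishes automatically, so the estimate reduces to bounding the quadratic remainder.

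For the quadratic factor, Lemma~\ref{High T Cor 2} gives $|\gamma-\widehat\gamma|\prec N^{-(\frac12+\phi)}$, so $(\widehat\gamma-\gamma)^2 \prec N^{-(1+2\phi)}$, which is exactly the target order. For the prefactor $G''(\xi)$, note that since $\beta<1/2$ we have $\widehat\gamma = 2\beta + \tfrac{1}{2\beta} + O(N^{-2\phi})$ by Lemma~\ref{High T Cor 2}, which is bounded away from $C_+ = 2 + O(N^{-2\phi})$ by a strictly positive constant for all large $N$. Combined with $|\gamma-\widehat\gamma|\prec N^{-(\frac12+\phi)}$, this implies that with high probability both $\gamma$ and $\widehat\gamma$, and hence the entire segment between them, lie in the region $\{z : z \ge C_+ + \delta\}$ for some fixed $\delta>0$. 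Part (ii) of Lemma~\ref{High T Lemma} then yields $G''(\xi)=O(1)$ with high probability.

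Putting the two bounds together gives $G(\widehat\gamma)-G(\gamma) = O_\prec(N^{-(1+2\phi)})$, which is the claimed estimate \eqref{High T Cor 3-1}. The main (mild) obstacle is just a careful stochastic-domination bookkeeping: the statement $|\gamma-\widehat\gamma|\prec N^{-(\frac12+\phi)}$ is a probabilistic one, so one must argue that the event on which $G''(\xi)=O(1)$ along the whole segment $[\gamma,\widehat\gamma]$ has high probability (which it does, since it is a finite intersection of high-probability events, namely rigidity of $\lambda_1$ and the closeness estimate in Lemma~\ref{High T Cor 2}), before combining the two estimates into an $O_\prec$ bound.
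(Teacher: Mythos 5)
Your proof is correct and takes essentially the same route as the paper: Taylor expand $G$ about the critical point $\gamma$ so that $G'(\gamma)=0$ kills the linear term, bound the quadratic remainder using Lemma~\ref{High T Lemma}\,(ii) for $G''=O(1)$, and apply the estimate $|\gamma-\widehat\gamma|\prec N^{-(\frac12+\phi)}$ from Lemma~\ref{High T Cor 2}. The paper is simply more terse; your added care about $\xi$ lying in the region $\{z\ge C_++\delta\}$ and about intersecting the relevant high-probability events is implicit in the paper's $O_\prec$ notation.
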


\begin{proof}
From the Taylor expansion, the definition of $\gamma$, and Lemma \ref{High T Lemma} (ii),
\beq
	G(\widehat{\gamma})=G(\gamma)+G'(\gamma)(\widehat{\gamma}-\gamma)+O_\prec(|\widehat{\gamma}-\gamma|^2)=G(\gamma)+O_\prec(|\widehat{\gamma}-\gamma|^2).
\eeq
It is then easy to conclude that Lemma \ref{High T Cor 3} holds from the results of Lemma \ref{High T Cor 2}.
\end{proof}

We are now ready to prove our main result on the high temperature regime, Theorem \ref{thm:sub}.

\begin{proof}[Proof of Theorem \ref{thm:sub}]
Applying the method of steepest-descent to the integral in \eqref{eq:integral}, we can find that
\beq
	\int_{\gamma-\ii \infty}^{\gamma+\ii \infty} e^{\frac{N}{2}G(z)}\, \dd z= \frac{\ii e^{\frac{N}{2}G(\gamma)}}{\sqrt{N}} \sqrt{\frac{4\pi}{G''(\gamma)}} \left(1+O_\prec (N^{-1}) \right).
\eeq
(See the proof of Lemma 5.4, especially Equation (5.22), in \cite{baik2016fluctuations}, for the detail.)
By Taylor expanding the exponential function, from Lemma \ref{High T Cor 3}, we get
\beq \begin{split}
	e^{\frac{N}{2}G(\gamma)} &=e^{\frac{N}{2}G(\widehat{\gamma})}(1+O_\prec(N^{-2\phi})).
\end{split} \eeq
This shows that
\beq
	\int_{\gamma-\ii\infty}^{\gamma+\ii\infty} e^{\frac{N}{2}G(z)}\, \dd z=\mathrm{i}e^{\frac{N}{2}G(\widehat{\gamma})} \sqrt{\frac{4\pi}{NG''(\gamma)}} (1+O_\prec(N^{-2\phi})).
\eeq
From Stirling's formula, we have for the constant $C_N$ in \eqref{eq:integral} that
\beq\label{eq:C_N}
	C_N=\frac{\Gamma(N/2)}{2\pi\mathrm{i}(N\beta)^{N/2-1}}=\frac{\sqrt{N}\beta}{\mathrm{i}\sqrt{\pi}(2\beta e)^{N/2}}(1+O(N^{-1})).
\eeq
Hence, we find from Proposition \ref{prop:integral}
\[
	F_N=\frac{1}{N}\log{Z_N}= \frac{1}{2}(G(\widehat{\gamma})-1-\log{2\beta})+\frac{1}{N}\left(\log{2\beta}-\frac{1}{2} \log G''(\gamma) \right)+O_\prec(N^{-1-2\phi}).
\]
Further, since
\[
	G''(\gamma) = -\frac{1}{N} \sum_i \frac{1}{(\gamma - \lambda_i)^2}
\]
and $|\gamma - \lambda_i|$ is uniformly bounded away from $0$ with high probability, we find that $\log G''(\gamma)/N = O_{\prec}(N^{-1})$. Thus,
\beq\label{Free Energy}
	F_N = \frac{1}{2}(G(\widehat{\gamma})-1-\log{2\beta}) + O_\prec(N^{-1}).
\eeq

To prove the fluctuation of the right-hand side of \eqref{Free Energy}, we apply Proposition \ref{cond:Linear Statistics} as follows: We consider a smooth, compactly supported function $\varphi$ such that $\varphi(x):= \log (\wh \gamma - x)$ for $x \in [C_-, C_+]$. Applying Proposition \ref{cond:Linear Statistics} with $\varphi$, we find that $q\sqrt{N}(G(\wh \gamma) - \E[G(\widehat{\gamma})])$ converges to a Gaussian in distribution with the variance
\beq \label{eq:variance_integral}
	\frac{\sigma^2}{2\pi^2} \left(\int_{-2}^2\log (2\beta + \frac{1}{2\beta} -x)\frac{2-x^2}{\sqrt{4-x^2}}\, \dd x \right)^2 = 8 \sigma^2 \beta^4
\eeq where we also use \eqref{eq:wh_gamma_approx} to approximate $\wh \gamma$ by $(2\beta + \frac{1}{2\beta})$. 
(See Appendix \ref{subsec:detail1} for the details of the approximation and evaluation of the integral \eqref{eq:variance_integral}.) We thus find that
\[
	q\sqrt{N} \left( F_N - \frac{1}{2} \big( \E[G(\widehat{\gamma})]-1-\log{2\beta} \big) \right)
\]
converges in distribution to the centered Gaussian with the variance $2 \sigma^2 \beta^4$.

It remains to estimate $\E[G(\widehat{\gamma})]$. From \eqref{eq:LSS_mean},
\[
	\E[G(\widehat{\gamma})] = 2\beta \wh \gamma - \E \left[ \frac{1}{N} \sum_i \varphi(\lambda_i) \right] = 2\beta \wh \gamma - \int_{C_-}^{C_+}\log (\widehat{\gamma}-x)\, \dd\nu (x) + O_\prec \left(N^{-1}+N^{-(\frac{1}{2}+3\phi)}\right).
\]
Thus, letting
\beq\label{High T Fbeta}
	F(\beta):=\left(\beta\widehat{\gamma}-\frac{1+\log (2\beta)}{2}\right)-\frac{1}{2}\int_{C_-}^{C_+}\log (\widehat{\gamma}-x)\, \dd\nu (x),
\eeq
we can conclude that $N^{\phi+\frac{1}{2}}(F_N-F(\beta))$ converges in distribution to the centered Gaussian with the variance $2 \sigma^2 \beta^4$. Lastly, we find from Proposition \ref{measure difference} that $F(\beta) = F_0(\beta) + O(N^{-2\phi})$. (See Appendix \ref{subsec:detail2} for more detail on the evaluation of $F(\beta)$.) This concludes the proof of Theorem \ref{thm:sub}.
\end{proof}


\section{Low Temperature Case} \label{sec:low}

In this section, we prove Theorem \ref{thm:sup}. The first step of the proof is an estimate on the difference between the critical point $\gamma$ and the largest eigenvalue $\lambda_1$. While this difference is approximately of order $N^{-1}$ for the Wigner case (see Lemma 6.1 in \cite{baik2016fluctuations}), it depends on the sparsity in our model. Throughout Section \ref{sec:low}, we let $t:=\min\{\phi+\frac{1}{2}, \frac{2}{3} \}$. Note that $\mathcal{Z}\prec N^{-t}$.

\begin{lem}\label{Low T Lemma 1}
Let $\beta>\beta_c$. Then,
\begin{equation}
	\frac{1}{3\beta N}\le \gamma-\lambda_1\prec\frac{1}{N^{3t/2}}.
\end{equation}
\end{lem}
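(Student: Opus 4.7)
The lower bound follows immediately from the critical-point equation $G'(\gamma)=0$: since every summand in $2\beta=\frac{1}{N}\sum_{i=1}^{N}\frac{1}{\gamma-\lambda_i}$ is positive, dropping all but the $i=1$ term gives $\frac{1}{N(\gamma-\lambda_1)}\le 2\beta$, whence $\gamma-\lambda_1\ge\frac{1}{2\beta N}>\frac{1}{3\beta N}$.

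For the upper bound, I plan to exploit the strict monotonicity of $G'$ on $(\lambda_1,\infty)$: it suffices to verify that $G'(\lambda_1+\eta_0)>0$ holds with high probability for $\eta_0:=N^{-3t/2+\epsilon}$, since then the unique zero $\gamma$ of $G'$ must satisfy $\gamma<\lambda_1+\eta_0$. Separating the pole at $\lambda_1$,
\[
G'(\lambda_1+\eta_0) \;=\; 2\beta \;-\; \frac{1}{N\eta_0} \;-\; \frac{1}{N}\sum_{i=2}^{N}\frac{1}{\lambda_1+\eta_0-\lambda_i},
\]
the term $\frac{1}{N\eta_0}=N^{3t/2-1-\epsilon}\le N^{-\epsilon}$ is small (because $t\le 2/3$), and by monotonicity the remaining sum is bounded above by $S:=\frac{1}{N}\sum_{i=2}^{N}(\lambda_1-\lambda_i)^{-1}$. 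Combining Proposition \ref{cond:rigidity} with Proposition \ref{cond:Limit of the Largest Eigenvalue}, I will replace $\lambda_1-\lambda_i$ by the deterministic proxy $(C_+-\gamma_i)+(1-\gamma_{sc,i}/2)\mathcal{Z}$ with controllable error. After the rescaling $x\mapsto x(1+\mathcal{Z}/2)$ and the replacement of $\nu$ by $\nu_{sc}$ via Proposition \ref{measure difference}, the resulting Riemann sum reduces to $-(1+\mathcal{Z}/2)^{-1}m_{sc}((C_++\mathcal{Z})/(1+\mathcal{Z}/2))$, whose argument differs from $2$ by $O(N^{-2\phi})+O_\prec(N^{-3\phi-1/2})$; since $-m_{sc}(2)=1$, this yields $S=1+O_\prec(N^{-\phi})$. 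Recalling $\beta>1/2$, we conclude $2\beta-S\ge (2\beta-1)-o(1)>c>0$ with high probability, which dominates $\frac{1}{N\eta_0}$ and closes the argument.

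The principal obstacle is the control of $S$ on the edge window: for indices $i$ close to $1$, the denominator $\lambda_1-\lambda_i$ is of order $\hat{i}^{2/3}N^{-2/3}$, comparable to the rigidity fluctuation $\hat{i}^{-1/3}N^{-2/3}$, so a naive Taylor expansion of $(\lambda_1-\lambda_i)^{-1}$ in the rigidity error diverges. I plan to treat the window $i\le N^{\epsilon}$ termwise using the deterministic spacing $\gamma_1-\gamma_i\asymp (i-1)^{2/3}N^{-2/3}$, while the bulk admits a standard smooth integral approximation. The secondary rigidity contribution $N^{-(1/2+3\phi)}$ adds a sum error of order at most $N^{-1/6-3\phi}$, which is absorbed by the margin $2\beta-1$. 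This slack is precisely why the stated bound $N^{-3t/2}$, weaker than the sharper $N^{-1}$ that a more delicate analysis would produce, arises naturally from this edge-plus-rigidity approach and is sufficient for the subsequent analysis.
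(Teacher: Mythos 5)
Your lower bound is correct (and in fact yields the slightly stronger $\gamma-\lambda_1\ge\frac{1}{2\beta N}$; the paper instead checks $G'(\lambda_1+\frac{1}{3\beta N})<-\beta<0$ directly). Your high-level strategy for the upper bound --- establish $G'(\lambda_1+\eta_0)>0$ with $\eta_0=N^{-3t/2+\epsilon}$ and invoke monotonicity of $G'$ --- also matches the paper. However, there is a genuine gap in the way you estimate the sum.

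The problem is the step where you bound $\frac{1}{N}\sum_{i\ge 2}\frac{1}{\lambda_1+\eta_0-\lambda_i}$ from above by $S:=\frac{1}{N}\sum_{i\ge 2}\frac{1}{\lambda_1-\lambda_i}$. This discards the offset $\eta_0$, which is precisely the regularization that controls the near-edge terms. The random quantity $S$ is \emph{not} stochastically bounded: $S\ge\frac{1}{N(\lambda_1-\lambda_2)}$, and Proposition~\ref{cond:rigidity} gives only a one-sided estimate $|\lambda_k-\gamma_k-\frac{\gamma_{sc,k}}{2}\mathcal{Z}|\prec\hat k^{-1/3}N^{-2/3}+\cdots$, whose size for $k=1,2$ is comparable to the deterministic spacing $\gamma_1-\gamma_2\asymp N^{-2/3}$. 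Rigidity therefore provides \emph{no lower bound} on $\lambda_1-\lambda_2$: the gap can be $o(N^{-2/3})$ with a probability that decays only polynomially in the gap parameter, which is not fast enough for the $\prec$ (high-probability for all $D$) standard. Consequently $S\prec 1$ fails, and neither the ``deterministic proxy $(C_+-\gamma_i)+(1-\gamma_{sc,i}/2)\mathcal{Z}$'' nor ``treating the window $i\le N^\epsilon$ termwise with $\gamma_1-\gamma_i\asymp (i-1)^{2/3}N^{-2/3}$'' can repair this, because you would still need a lower tail bound on the random gap, which is not available from the quoted propositions. Your own diagnostic (``a naive Taylor expansion of $(\lambda_1-\lambda_i)^{-1}$ in the rigidity error diverges'') identifies the symptom but the proposed remedy does not address it.

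The paper circumvents this by \emph{keeping} the offset. With $z=\lambda_1+N^{-3t/2+4\epsilon}$, one has the deterministic bound $z-\lambda_i\ge N^{-3t/2+4\epsilon}$ for every $i$, hence each near-edge term satisfies $\frac{1}{N(z-\lambda_i)}\le N^{3t/2-1-4\epsilon}$, and summing the first $N^{1-3t/2+3\epsilon}$ indices gives a total contribution $O(N^{-\epsilon})$ --- no gap information is needed. Rigidity is invoked only in the bulk window $N^{1-3t/2+3\epsilon}<i\le N-N^{1-3t/2+3\epsilon}$, where $C_+-\gamma_i\gtrsim N^{-t+2\epsilon}$ dominates all rigidity errors, so the ratios are under control and the Riemann sum against $\nu$ yields $2\beta_c+o(1)$. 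To make your approach sound, replace the bound by $S$ with this three-window decomposition and apply the trivial $\eta_0$-regularization on the top window; then the rest of your computation of the bulk (via $m_{sc}$ and Proposition~\ref{measure difference}) does carry through.
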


\begin{proof}
From the definition of $G(z)$, we find for any $z>\lambda_1$ that
\beq
	G'(z)=2\beta-\frac{1}{N}\sum_i\frac{1}{z-\lambda_i}<2\beta-\frac{1}{N}\frac{1}{z-\lambda_1}.
\eeq
Thus, $G'(\lambda_1+\frac{1}{3\beta N})<0$. Since $G'(z)$ is an increasing function of $z \in (\lambda_1, \infty)$, this proves the first inequality of the desired lemma. Further, it also shows that the second inequality of the desired lemma holds if $G'(\lambda_1+N^{-\frac{3t}{2}+4\epsilon})>0$ with high probability for any sufficiently small $\epsilon>0$.

Set $z=\lambda_1+N^{-\frac{3t}{2}+4\epsilon}$. We decompose $G(z)$ as
\beq
	G'(z)=2\beta-\frac{1}{N}\sum_{i=1}^{N^{1-\frac{3t}{2}+3\epsilon}}\frac{1}{z-\lambda_i}-\frac{1}{N}\sum_{i=N^{1-\frac{3t}{2}+3\epsilon}+1}^{N-N^{1-\frac{3t}{2}+3\epsilon}}\frac{1}{z-\lambda_i}-\frac{1}{N}\sum_{i=N-N^{1-\frac{3t}{2}+3\epsilon}+1}^{N}\frac{1}{z-\lambda_i}.
\eeq
For $1\le i\le N^{1-\frac{3t}{2}+3\epsilon}$, 
\beq
	\frac{1}{N}\sum_{i=1}^{N^{1-\frac{3t}{2}+3\epsilon}}\frac{1}{z-\lambda_i}=O(N^{-\epsilon}).
\eeq
For $N^{1-\frac{3t}{2}+3\epsilon}<i\le N-N^{1-\frac{3t}{2}+3\epsilon}$, we have from Proposition \ref{cond:rigidity} that 
\[
	|\lambda_i-\gamma_i|\le N^{-1+\frac{t}{2}} + \frac{|\gamma_{sc,i}|}{2}|\mathcal{Z}|+ N^{-(\frac{1}{2}+3\phi)+\epsilon}
\]
with high probability. We also note that $C_+-\gamma_i\ge C^{-1}N^{-t+2\epsilon}$ from \eqref{eq:classicalfromC}. On the other hand, since
\[
	\lambda_1-\gamma_1\le N^{-\frac{2}{3}+\epsilon} + |\mathcal{Z}| + N^{-(\frac{1}{2}+3\phi)+\epsilon} 
\]
from the rigidity condition, and $C_+-\gamma_1=O(N^{-2/3})$ from \eqref{eq:classicalfromC}, we have 
\[
	\lambda_1=C_+ + O(|\mathcal{Z}| + N^{-\frac{2}{3}+\epsilon}+N^{-(\frac{1}{2}+3\phi)+\epsilon}).
\]
Altogether, we can find
\beq \begin{split}
	&\sum_{i=N^{1-\frac{3t}{2}+3\epsilon}+1}^{N-N^{1-\frac{3t}{2}+3\epsilon}}\frac{1}{\lambda_1+N^{-\frac{3t}{2}+4\epsilon}-\lambda_i}\\
	&=\sum_{i=N^{1-\frac{3t}{2}+3\epsilon}+1}^{N-N^{1-\frac{3t}{2}+3\epsilon}}\frac{1}{C_+-\gamma_i} \big( 1+O(N^{-\frac{2}{3}+t-\epsilon})+O(N^{t-2\epsilon}|\mathcal{Z}|)+O(N^{t-(\frac{1}{2}+3\phi)-\epsilon}) \big).
\end{split} \eeq
In order to estimate the right-hand side, recall the definition of $\wh\gamma_k$ for $k=1,2,\ldots, N$ in \eqref{def:whgamma} and $\wh{\gamma}_0=C_+$. Since $\widehat{\gamma}_k \le\gamma_k \le\widehat{\gamma}_{k-1}$, for $k=2, 3, \ldots, N-1$,
\beq
	\int_{\widehat{\gamma}_{k+1}}^{\widehat{\gamma}_k}\frac{\dd\nu(x)}{C_+-x}\le\frac{1}{N}\frac{1}{C_+-\gamma_k}\le\int_{\widehat{\gamma}_{k-1}}^{\widehat{\gamma}_{k-2}}\frac{\dd\nu (x)}{C_+-x}.
\eeq
Summing these inequalities,
\beq\label{Low T Lemma 1-1} \begin{split}
	\left|\frac{1}{N}\sum_{i=N^{1-\frac{3t}{2}+3\epsilon}+1}^{N-N^{1-\frac{3t}{2}+3\epsilon}}\frac{1}{C_+-\gamma_i}-2\beta_c\right|=\left|\frac{1}{N}\sum_{i=N^{1-\frac{3t}{2}+3\epsilon}+1}^{N-N^{1-\frac{3t}{2}+3\epsilon}}\frac{1}{C_+-\gamma_i}-\int_{C_-}^{C_+}\frac{\dd\nu (x)}{C_+-x}\right|\\ \le C\left(\int_{C_-}^{\widehat{\gamma}_{N-N^{1-\frac{3t}{2}+3\epsilon}-2}}\frac{\dd\nu (x)}{C_+-x}+\int_{\widehat{\gamma}_{N^{1-\frac{3t}{2}+3\epsilon}+2}}^{C_+}\frac{\dd\nu (x)}{C_+-x}\right)=O(N^{-\frac{t}{2}+\epsilon}).
\end{split} \eeq

Lastly, for $N-N^{1-\frac{3t}{2}+3\epsilon}<i\le N$, it is not hard to see that $\lambda_1-\lambda_i\ge1$, and hence
\beq
	\frac{1}{N}\sum_{i=N-N^{1-\frac{3t}{2}+3\epsilon}+1}^{N}\frac{1}{z-\lambda_i}=O(N^{-\frac{3t}{2}+3\epsilon}).
\eeq
In conclusion, we have shown that 
\beq
	G'(\lambda_1+N^{-\frac{3t}{2}+4\epsilon})=2\beta-2\beta_c+o(1)>0.
\eeq
with high probability, which completes the proof of the desired lemma.
\end{proof}

As in the high temperature regime, we will show that the main contribution to the free energy in the integral representation formula \eqref{eq:integral} is due to $G(\gamma)$. Heuristically, it can be approximated as
\[
	G(\gamma) = 2\beta \gamma - \frac{1}{N} \sum_i \log(\gamma - \lambda_i) \approx 2\beta \lambda_1 - \frac{1}{N} \sum_i \left( \log(C_+ - \lambda_i) + \frac{\gamma - C_+}{C_+ - \lambda_i} \right).
\]
For Wigner matrices, the sum involving $\lambda_i$ can then be approximated by the definite integral with the deterministic measure $\nu_{sc}$; we refer to Lemma 6.2 in \cite{baik2016fluctuations} for more detail. For sparse matrices, however, the randomness in the rigidity estimate in Proposition \ref{cond:rigidity} is not negligible and it must be taken into consideration in the approximation of $G(\gamma)$. The precise statement for this approximation is the following lemma.

\begin{lem}\label{Low T Lemma 2}
Let $\beta>\beta_c$. Then,
\beq \begin{split}
	G(\gamma)&=2\beta\lambda_1-\int_{C_-}^{C_+}\log (C_+-x)\dd\nu (x)+\frac{\mathcal{Z}}{2}-2\beta_c(\lambda_1-C_+)+O_\prec(N^{-3t/2}+N^{-(\frac{1}{2}+\frac{5\phi}{3})}).
\end{split} \eeq
Moreover, there is a constant $C_0>0$ independent on $\ell$ such that for all $\ell=2, 3, \ldots$,
\beq
	N^{\frac{3t\ell}{2}-1}\prec\frac{(-1)^\ell}{(\ell-1)!}G^{(\ell)}(\gamma)\prec C_0^\ell N^{\ell-\frac{3t}{2}}.
\eeq
\end{lem}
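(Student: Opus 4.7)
The plan is to expand $G(\gamma)$ around the pair $(\lambda_1, C_+)$ and to extract the $\mathcal{Z}/2$ correction directly from the refined rigidity of Proposition \ref{cond:rigidity}; the derivative bounds will then follow from a separate term-by-term analysis of $\frac{1}{N}\sum_i(\gamma-\lambda_i)^{-\ell}$.

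First I start from $G(\gamma)=2\beta\gamma-\frac{1}{N}\sum_i\log(\gamma-\lambda_i)$ and replace $2\beta\gamma$ by $2\beta\lambda_1$ at a cost of $O_\prec(N^{-3t/2})$ via Lemma \ref{Low T Lemma 1}. The $i=1$ term of the logarithmic sum is $O_\prec(N^{-1}\log N)$, thanks to the two-sided estimate $\frac{1}{3\beta N}\le\gamma-\lambda_1\prec N^{-3t/2}$ from the same lemma. For $i\ge 2$ I write
\[
\log(\gamma-\lambda_i)=\log(C_+-\gamma_i)+\log\!\left(1+\frac{(\gamma-C_+)-(\lambda_i-\gamma_i)}{C_+-\gamma_i}\right),
\]
and split the remaining sum into an edge part $2\le i\le K$ and a bulk part $i>K$ for a suitable $K=N^{\delta}$ chosen to match the two target error exponents.

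In the bulk region, rigidity yields $C_+-\gamma_i\gg\max(|\gamma-C_+|,|\lambda_i-\gamma_i|)$, so a first-order Taylor expansion of $\log(1+\cdot)$ is legitimate. The linear term splits into three pieces: (i) the contribution involving $\gamma-C_+$ becomes $-(\lambda_1-C_+)\cdot 2\beta_c$ after invoking $\gamma=\lambda_1+O_\prec(N^{-3t/2})$ and the defining Riemann sum $\frac{1}{N}\sum_i(C_+-\gamma_i)^{-1}\to 2\beta_c$; (ii) the contribution involving $\lambda_i-\gamma_i$, after substituting $\lambda_i-\gamma_i=\frac{\gamma_{sc,i}}{2}\mathcal{Z}+O_\prec(\hat i^{-1/3}N^{-2/3}+N^{-(1/2+3\phi)})$ and exploiting the algebraic identity $\gamma_{sc,i}/(C_+-\gamma_i)=-1+C_+/(C_+-\gamma_i)+O(N^{-2\phi})$ together with $C_+\cdot 2\beta_c=2+o(1)$, yields exactly the advertised $+\mathcal{Z}/2$; (iii) the leading piece $\frac{1}{N}\sum_{i>K}\log(C_+-\gamma_i)$ is identified with $\int_{C_-}^{C_+}\log(C_+-x)\,d\nu(x)$ via a Riemann sum estimate using the square root edge profile of Proposition \ref{cond:regular}.

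The main obstacle is the edge region $2\le i\le K$, where $y_i:=((\gamma-C_+)-(\lambda_i-\gamma_i))/(C_+-\gamma_i)$ can be of order one and Taylor expansion is inadmissible; controlling this contribution requires direct estimation of $|\log(\gamma-\lambda_i)-\log(C_+-\gamma_i)|$ together with an optimal choice of $K$ that simultaneously bounds the bulk quadratic Taylor remainder (driven by $\mathcal{Z}^2$ and the subleading rigidity error) and the edge truncation error by $O_\prec(N^{-3t/2}+N^{-(1/2+5\phi/3)})$; this balance is precisely where the exponent $\frac{1}{2}+\frac{5\phi}{3}$ surfaces. The derivative bound, by contrast, is direct: using $\frac{(-1)^\ell}{(\ell-1)!}G^{(\ell)}(\gamma)=\frac{1}{N}\sum_i(\gamma-\lambda_i)^{-\ell}$, the upper bound $\prec C_0^\ell N^{\ell-3t/2}$ separates the $i=1$ term (bounded by $(3\beta)^\ell N^{\ell-1}\le C_0^\ell N^{\ell-3t/2}$ since $t\le 2/3$) from the bulk (bounded using $\gamma-\lambda_i\gtrsim(i/N)^{2/3}$ and the convergence of $\sum i^{-2\ell/3}$ for $\ell\ge 2$), while the lower bound follows from the $i=1$ term alone via $\gamma-\lambda_1\prec N^{-3t/2}$.
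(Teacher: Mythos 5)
Your overall strategy mirrors the paper's: Taylor-expand $\log(\gamma-\lambda_i)$ around $\log(C_+-\gamma_i)$, split indices into an edge and a bulk part, identify the linear terms with the advertised answer, and bound the derivatives term-by-term. The decomposition, the role of the two-sided bound on $\gamma-\lambda_1$, the use of the refined rigidity, and the derivative-bound argument all match the paper (which takes $K=N^{1-3t/2+3\epsilon}$ as the cut). So this is not a genuinely different route.

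However, there is one substantive error of attribution. You claim that the exponent $\tfrac{1}{2}+\tfrac{5\phi}{3}$ ``surfaces precisely'' from balancing the bulk quadratic Taylor remainder against the edge-truncation error through a good choice of $K$. This is not where it comes from. In the paper's proof the edge and bulk-quadratic errors are both $O_\prec(N^{-3t/2})$ for the cut $K=N^{1-3t/2+3\epsilon}$, and the intermediate error after the Taylor step is $O_\prec(N^{-3t/2}+N^{-(1/2+3\phi)})$. The $N^{-(1/2+5\phi/3)}$ term appears only afterwards, when the deterministic coefficient of $\mathcal{Z}$ is evaluated: the Appendix shows $\frac{1}{N}\sum_i\frac{\gamma_{sc,i}/2}{C_+-\gamma_i}=\tfrac{1}{2}+O(N^{-2\phi/3})$, and since $\mathcal{Z}\prec N^{-(\phi+1/2)}$ the product of these two gives $N^{-(1/2+5\phi/3)}$. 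This is a deterministic Riemann-sum/measure-replacement error, not a stochastic edge/bulk trade-off; optimizing $K$ cannot remove it. A related slip: your algebraic identity is stated with a pointwise error $O(N^{-2\phi})$, but since $\gamma_{sc,i}-\gamma_i=O(N^{-2\phi})$ only up to division by $C_+-\gamma_i$, the correct pointwise error is $O\!\left(N^{-2\phi}/(C_+-\gamma_i)\right)$, which is large near the edge; it is only after summing against $\frac{1}{N}$ and multiplying by $\mathcal{Z}$ that it becomes a controllable quantity. If you carry that out carefully you would in fact land on an error of order $\mathcal{Z}\cdot O(N^{-2\phi})\prec N^{-(1/2+3\phi)}$, which is sharper than the paper's $N^{-(1/2+5\phi/3)}$ --- but that is different from what you asserted, and as written the origin of the stated exponent is mislocated.
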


\begin{proof}
Fix a (small) constant $\epsilon>0$. We begin by decomposing the sum $N^{-1} \sum_i \log (\gamma - \lambda_i)$ as in the proof of Lemma \ref{Low T Lemma 1}. Since $\gamma-\lambda_1\ge\frac{1}{3\beta N}$ from Lemma \ref{Low T Lemma 1},
\beq \label{eq:Low T Lemma 2_small}
	\left|\frac{1}{N}\sum_{i=1}^{N^{1-\frac{3t}{2}+3\epsilon}}\log (\gamma-\lambda_i)\right|\le\frac{CN^{3\epsilon}}{N^{3t/2}}\log N.
\eeq
We also have with high probability that
\beq \label{eq:Low T Lemma 2_large}
	\left|\frac{1}{N}\sum_{i=N-N^{1-\frac{3t}{2}+3\epsilon}+1}^{N}\log (\gamma-\lambda_i)\right|\le\frac{CN^{3\epsilon}}{N^{3t/2}}.
\eeq
We now consider the case $N^{1-\frac{3t}{2}+3\epsilon}<i<N-N^{1-\frac{3t}{2}+3\epsilon}+1$. We notice that
\beq \begin{split} \label{eq:log_gamma}
	&\log (\gamma-\lambda_i)-\log (C_+-\gamma_i)+\frac{\gamma_{sc,i}/2}{C_+-\gamma_i}\mathcal{Z}=\log \left(1+\frac{\gamma-C_+}{C_+-\gamma_i}+\frac{\gamma_i-\lambda_i}{C_+-\gamma_i}\right)+\frac{\gamma_{sc,i}/2}{C_+-\gamma_i}\mathcal{Z}\\&=\frac{\gamma-C_+}{C_+-\gamma_i}+\frac{\gamma_i-\lambda_i+(\gamma_{sc,i}/2)\mathcal{Z}}{C_+-\gamma_i}+O\left(\left(\frac{\gamma-C_+}{C_+-\gamma_i}\right)^2\right)+O\left(\left(\frac{\lambda_i-\gamma_i}{C_+-\gamma_i}\right)^2\right).
\end{split} \eeq

The first error term in \eqref{eq:log_gamma} can be estimated by 
\beq \begin{split}
	\left(\frac{\gamma-C_+}{C_+-\gamma_i}\right)^2 &\prec \frac{N^{-1-6\phi}+N^{-4/3}+N^{-2/3}|\mathcal{Z}|+\mathcal{Z}^2}{i^{4/3}N^{-4/3}}\\
	&\prec (1+N^{\frac{1}{3}-6\phi}+N^{\frac{1}{6}-\phi}+N^{\frac{1}{3}-2\phi})i^{-4/3}=C(1+N^{\frac{1}{3}-2\phi})i^{-4/3},
\end{split} \eeq
where we used the decomposition
\beq\label{gamma-C_+}
	\gamma-C_+=(\gamma-\lambda_1)+(\lambda_1-\gamma_1)+(\gamma_1-C_+)=O_\prec(N^{-2/3}+N^{-(\frac{1}{2}+3\phi)})+O(|\mathcal{Z}|),
\eeq
which can be seen from \eqref{eq:classicalfromC} and Lemma \ref{Low T Lemma 1}. It is straightforward to see that
\beq\label{Low T Lemma 2-1}
	\frac{1}{N}\sum_{i=N^{1-\frac{3t}{2}+3\epsilon}}^{N-N^{1-\frac{3t}{2}+3\epsilon}}i^{-4/3}\le CN^{-\frac{4}{3}+\frac{t}{2}-\epsilon}.
\eeq
Hence,
\beq \label{eq:Low T Lemma 2_mid1}
	\sum_{i=N^{1-\frac{3t}{2}+3\epsilon}}^{N-N^{1-\frac{3t}{2}+3\epsilon}}\left(\frac{\gamma-C_+}{C_+-\gamma_i}\right)^2\prec N^{-\frac{4}{3}+\frac{t}{2}-\epsilon}+N^{-1-2\phi+\frac{t}{2}-\epsilon}\le 2N^{-\epsilon}N^{-3t/2}.
\eeq

For the second error term in \eqref{eq:log_gamma},
\beq \begin{split}
	\left(\frac{\lambda_i-\gamma_i}{C_+-\gamma_i}\right)^2 &\prec\left(\frac{N^{-2/3}\hat{i}^{-1/3}+(|\gamma_{sc,i}/2|)|\mathcal{Z}|+N^{-(\frac{1}{2}+3\phi)}}{i^{2/3}N^{-2/3}}\right)^2 \\
	&\le\left(\frac{N^{-2/3}\hat{i}^{-1/3}+|\mathcal{Z}|+N^{-(\frac{1}{2}+3\phi)}}{i^{2/3}N^{-2/3}}\right)^2.
\end{split} \eeq
Since $\mathcal{Z}\prec N^{-(\phi+\frac{1}{2})}\le N^{-t}$, from \eqref{Low T Lemma 2-1}, we find
\beq \begin{split}
	&\frac{1}{N}\sum_{i=N^{1-\frac{3t}{2}+3\epsilon}}^{N-N^{1-\frac{3t}{2}+3\epsilon}}\left(\frac{\lambda_i-\gamma_i}{C_+-\gamma_i}\right)^2 \\
	&\prec N^{-\frac{4}{3}+ \frac{t}{2}-\epsilon}(N^{-\frac{2}{3}+t-2\epsilon}+N^{\frac{4}{3}}|\mathcal{Z}|^2+N^{\frac{1}{3}-6\phi} +2|\mathcal{Z}|N^{\frac{1}{3}+\frac{t}{2}-\epsilon}+2N^{\frac{5}{6}-3\phi}|\mathcal{Z}|+2N^{-\frac{1}{6}-3\phi+ \frac{t}{2}-\epsilon})\\&\prec N^{-2+\frac{3t}{2}-3\epsilon}+N^{-1-6\phi+ \frac{t}{2}-\epsilon}+2N^{-\frac{3t}{2}-\epsilon}+N^{-1-2\epsilon} =O(N^{-\frac{3t}{2}-\epsilon}).
\end{split} \eeq
We use $\widehat{i}\ge N^{1-\frac{3t}{2}+3\epsilon}$ during the summation.

For the second term in the right hand side of \eqref{eq:log_gamma} containing $\mathcal{Z}$, we consider two different cases. For $N^{1-\frac{3t}{2}+3\epsilon}<i\le N/2$, we use the estimate
\begin{align}
	\frac{|\gamma_i-\lambda_i+(\gamma_{sc,i}/2)\mathcal{Z}|}{C_+-\gamma_i} \prec\frac{i^{-1/3}N^{-2/3}+N^{-(\frac{1}{2}+3\phi)}}{i^{2/3}N^{-2/3}} =\left(i^{-1}+i^{-2/3}N^{\frac{1}{6}-3\phi}\right).
\end{align}
For $N/2<i<N-N^{1-\frac{3t}{2}+3\epsilon}$, we similarly have
\beq\begin{split}
	\frac{|\gamma_i-\lambda_i+(\gamma_{sc,i}/2)\mathcal{Z}|}{C_+-\gamma_i}& \prec \frac{(N-i)^{-1/3}N^{-2/3}+ N^{-(\frac{1}{2}+3\phi)}}{i^{2/3}N^{-2/3}} = \left([i^2(N-i)]^{-1/3}+i^{-2/3}N^{\frac{1}{6}-3\phi}\right)\\ &\le \left(N^{-2/3}(N-i)^{-1/3}+i^{-2/3}N^{\frac{1}{6}-3\phi}\right).
\end{split} \eeq
Hence,
\beq \begin{split} \label{eq:Low T Lemma 2_mid2}
	&\frac{1}{N}\sum_{i=N^{1-\frac{3t}{2}+3\epsilon}}^{N-N^{1-\frac{3t}{2}+3\epsilon}}\frac{|\gamma_i-\lambda_i+(\gamma_{sc,i}/2)\mathcal{Z}|}{C_+-\gamma_i} \\
	& \prec \frac{1}{N}\sum_{i=N^{1-\frac{3t}{2}+3\epsilon}}^{N/2}\left(i^{-1}+i^{-2/3}N^{\frac{1}{6}-3\phi}\right) +\frac{1}{N}\sum_{i=N/2+1}^{N-N^{1-\frac{3t}{2}+3\epsilon}}\left(N^{-2/3}(N-i)^{-1/3}+i^{-2/3}N^{\frac{1}{6}-3\phi}\right)\\ &= O(N^{-1}\log N+N^{-(\frac{1}{2}+3\phi)}).
\end{split} \eeq

Combining \eqref{eq:Low T Lemma 2_small}, \eqref{eq:Low T Lemma 2_large}, \eqref{eq:log_gamma}, \eqref{eq:Low T Lemma 2_mid1}, and \eqref{eq:Low T Lemma 2_mid2}, we obtain
\beq \begin{split} \label{eq:Low T Lemma 2_combine}
	&\left|\frac{1}{N}\sum_i \log (\gamma-\lambda_i) -\frac{1}{N}\sum_{i=N^{1-\frac{3t}{2}+3\epsilon}+1}^{N-N^{1-\frac{3t}{2}+3\epsilon}}\left[\log (C_+-\gamma_i)- \frac{\gamma_{sc,i}/2}{C_+-\gamma_i}\mathcal{Z} +\frac{\gamma-C_+}{C_+-\gamma_i}\right]\right| \\
	&\le N^{-\frac{3t}{2}+4\epsilon}+ N^{-(\frac{1}{2}+3\phi)+2\epsilon}.
\end{split} \eeq

Following the proof of Lemma \ref{Low T Lemma 1}, we can convert the sum involving $\log (C_+-\gamma_i)$ in \eqref{eq:Low T Lemma 2_combine} as
\beq \begin{split}
	&\left|\frac{1}{N}\sum_{i=N^{1-\frac{3t}{2}+3\epsilon}+1}^{N-N^{1-\frac{3t}{2}+3\epsilon}}\log (C_+-\gamma_i)-\int_{C_-}^{C_+}\log (C_+-x)\, d\nu (x)\right|\\ &\le C\left(\int_{C_-}^{\widehat{\gamma}_{N-N^{1-\frac{3t}{2}+3\epsilon}-2}}\log (C_+-x)\, d\nu (x)+\int_{\widehat{\gamma}_{N^{1-\frac{3t}{2}+3\epsilon}+2}}^{C_+}\log (C_+-x)\, d\nu (x)\right)\\ &\le CN^{-\frac{3t}{2}+3\epsilon}+C\int_0^{N^{-t+2\epsilon}}\sqrt{t}\log t\, dt\\ &\le CN^{-\frac{3t}{2}+3\epsilon}\log N.
\end{split} \eeq
We also have
\[
	\left|\frac{1}{N}\sum_{i=1}^{N^{1-\frac{3t}{2}+3\epsilon}}\frac{\gamma_{sc,i}/2}{C_+-\gamma_i}\right| \le \frac{C}{N}\sum_{i=1}^{N^{1-\frac{3t}{2}+3\epsilon}}i^{-2/3}N^{2/3} \le\frac{CN^{\epsilon}}{N^{t/2}}
\]
and
\[ \begin{split}
	\left|\frac{1}{N}\sum_{i=N-N^{1-\frac{3t}{2}+3\epsilon}+1}^{N}\frac{\gamma_{sc,i}/2}{C_+-\gamma_i}\right|&\le \frac{C}{N}\sum_{i=N-N^{1-\frac{3t}{2}+3\epsilon}+1}^{N}i^{-2/3}N^{2/3}\\&\le\frac{C}{N^{1/3}}(N^{1/3}-(N-N^{1-\frac{3t}{2}+3\epsilon})^{1/3})=O(N^{-\frac{3t}{2}+3\epsilon}),
\end{split} \]
which can be used to control the sum involving $\caZ$ in \eqref{eq:Low T Lemma 2_combine}. We proved in \eqref{Low T Lemma 1-1} that
\beq
	\left|\frac{1}{N}\sum_{i=N^{1-\frac{3t}{2}+3\epsilon}+1}^{N-N^{1-\frac{3t}{2}+3\epsilon}}\frac{1}{C_+-\gamma_i}-2\beta_c\right|=O(N^{-\frac{t}{2}+\epsilon}).
\eeq
Hence
\beq
	\left|(\gamma-C_+)\left[\frac{1}{N}\sum_{i=N^{\frac{1}{2}-\phi-6\epsilon}+1}^{N-N^{1-\frac{3t}{2}+3\epsilon}}\frac{1}{C_+-\gamma_i}-2\beta_c\right]\right|=O_\prec(N^{-3t/2})
\eeq
since $\gamma-C_+ \prec N^{-t}$, shown in \eqref{gamma-C_+}.

We thus conclude that
\beq \begin{split}
	&\left|\frac{1}{N}\sum_i\log (\gamma-\lambda_i)-\int_{C_-}^{C_+}\log (C_+-x)d\nu (x)+\frac{\mathcal{Z}}{N}\sum_i \frac{\gamma_{sc,i}/2}{C_+-\gamma_i}-2\beta_c(\gamma-C_+)\right|\\
	&=O(N^{-\frac{3t}{2}+4\epsilon})+O(N^{-(\frac{1}{2}+3\phi)+2\epsilon}).
\end{split} \eeq
This shows that
\beq 
	G(\gamma)=2\beta\lambda_1-\int_{C_-}^{C_+}\log (C_+-x)\dd\nu (x)+\frac{\mathcal{Z}}{N}\sum_i\frac{\gamma_{sc,i}/2}{C_+-\gamma_i}-2\beta_c(\lambda_1-C_+)\\+O_\prec(N^{-3t/2}+N^{-(\frac{1}{2}+3\phi)}).
 \eeq
It is elementary to prove that
\beq
	\frac{1}{N}\sum_i \frac{\gamma_{sc,i}/2}{C_+-\gamma_i}=\frac{1}{2}+O(N^{-2\phi/3}),
\eeq
which finishes the proof of the first part of the lemma. (See Appendix \ref{subsec:detail3} for the detail.)

We now prove the second part of the lemma. We have
\beq
	G^{(\ell)}(\gamma)=\frac{(-1)^\ell(\ell-1)!}{N}\sum_{i=1}^N\frac{1}{(\gamma-\lambda_i)^\ell}
\eeq
for $\ell=2, 3, \ldots$. For the lower bound, we use Lemma \ref{Low T Lemma 1} to obtain
\beq
	\sum_{i=1}^N\frac{1}{(\gamma-\lambda_i)^\ell}\ge\frac{1}{N}\frac{1}{(\gamma-\lambda_1)^\ell}\ge N^{\frac{3t\ell}{2}-1-4\ell\epsilon}.
\eeq
For the upper bound, we use
\beq
	\frac{1}{N}\sum_{i=1}^{N^{1-\frac{3t}{2}+3\epsilon}}\frac{1}{(\gamma-\lambda_i)^\ell}\le N^{-\frac{3t}{2}+3\epsilon}\frac{1}{(\gamma-\lambda_1)^\ell}\le (3\beta)^\ell N^{\ell-\frac{3t}{2}+3\epsilon}
\eeq
and
\beq \begin{split}
	\frac{1}{N}\sum_{i=N^{1-\frac{3t}{2}+3\epsilon}}^{N}\frac{1}{(\gamma-\lambda_i)^\ell}&\le\frac{1}{N}\sum_{i=N^{1-\frac{3t}{2}+3\epsilon}}^{N}\frac{2^\ell N^{3\epsilon\ell}}{(C_+-\gamma_i)^\ell} \le \frac{2^\ell N^{3\epsilon\ell}}{N}\sum_{i=N^{1-\frac{3t}{2}+3\epsilon}}^{N}\frac{1}{(i^{2/3}N^{-2/3})^\ell}\\
	& \le \frac{2^\ell}{(2/3)\ell-1}N^{(\frac{2}{3}\ell-1)(\frac{3t}{2}-3\epsilon)+3\ell\epsilon}.
\end{split} \eeq
This implies that there is a constant $C_0>0$ independent on $\ell$ such that for all $\ell=2, 3, \ldots$,
\[
	N^{\frac{3t\ell}{2}-1-4\ell\epsilon}\le\frac{(-1)^\ell}{(\ell-1)!}G^{(\ell)}(\gamma)\le C_0^\ell N^{\ell-\frac{3t}{2}+3\epsilon}
\]
with high probability, which proves the desired Lemma.
\end{proof}

\begin{lem}\label{Low T Lemma 3}
	Let $\beta>\beta_c$. Let $\gamma$ be the unique solution to the equation $G'(\gamma)=0$. Then. there exists $K\equiv K(N)$ satisfying $N^{-C}<K<C$ for some constant $C>0$ such that
\beq
	\int_{\gamma-\mathrm{i}\infty}^{\gamma+\mathrm{i}\infty}e^{\frac{N}{2}G(z)}\, dz=\mathrm{i}e^{\frac{N}{2}G(\gamma)}K
\eeq
with high probability.
\end{lem}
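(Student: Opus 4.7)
My plan is to parametrize the vertical contour as $z = \gamma + iy$ with $y \in \R$, so that the claim reduces to showing
\[
K := \int_{-\infty}^{\infty} e^{\frac{N}{2}[G(\gamma + iy) - G(\gamma)]}\, dy
\]
satisfies $N^{-C} \leq K \leq C$ with high probability. Note that $K$ is real by the conjugation symmetry $G(\overline z) = \overline{G(z)}$ together with the change of variables $y \mapsto -y$. Setting $a_i := \gamma - \lambda_i > 0$ and separating $\log(a_i + iy)$ into modulus and argument gives
\[
\re[G(\gamma+iy)-G(\gamma)] = -\frac{1}{2N}\sum_{i=1}^{N}\log\!\left(1+\frac{y^2}{a_i^2}\right) \leq 0,
\]
so the integrand has modulus at most $1$; this identity underlies both directions of the argument.

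For the upper bound, I would pick a constant $L$ such that, with high probability, $a_i \leq L$ for every $i$; such an $L$ exists because the edge and bulk rigidity localize all $\lambda_i$ in a bounded interval and Lemma \ref{Low T Lemma 1} keeps $\gamma$ bounded. The contribution from $|y|\leq L$ is at most $2L$ by the modulus bound, while on $|y|\geq L$ the inequality $\log(1+y^2/a_i^2) \geq 2\log(|y|/L)$ produces an integrand bounded by $(L/|y|)^{N/2}$, whose integral decays exponentially in $N$ past $|y|=2L$. This yields $K \leq C$ for some $N$-independent constant $C$.

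For the lower bound, I would restrict to $|y|\leq \delta := N^{-2}$ and Taylor expand using $G'(\gamma) = 0$:
\[
G(\gamma+iy)-G(\gamma) = \sum_{\ell=2}^{\infty}\frac{(iy)^\ell}{\ell!}G^{(\ell)}(\gamma).
\]
The upper bound $|G^{(\ell)}(\gamma)|/\ell! \leq C_0^\ell N^{\ell-3t/2}/\ell$ from Lemma \ref{Low T Lemma 2}, summed as a geometric series with ratio $C_0 N\delta = O(N^{-1})$, gives
\[
\tfrac{N}{2}\bigl|G(\gamma+iy)-G(\gamma)\bigr| \;\leq\; C\,N^{1-3t/2}\,(N\delta)^2 \;=\; O(N^{-1-3t/2}) \;=\; o(1).
\]
Hence the integrand stays above $\tfrac{1}{2}$ throughout $[-\delta,\delta]$, contributing at least $\delta = N^{-2}$ to $K$. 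To conclude, I would also show that the complementary integral $\int_{|y|>\delta}$ cannot cancel this positive contribution, using the tail decay already established in the upper-bound argument together with a quantitative lower bound on $\sum_i \log(1+y^2/a_i^2)$ for intermediate $|y|$ based on summing over the bulk of the spectrum.

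The main difficulty lies in this last control on the intermediate range $\delta \leq |y| \leq L$. The derivative estimates of Lemma \ref{Low T Lemma 2} force the radius of Taylor validity to be $\sim 1/(C_0 N)$, which is generally much smaller than the natural Gaussian scale $1/\sqrt{NG''(\gamma)}$ in the sparse regime, so one cannot simply replace the integrand by its quadratic saddle approximation on a single window. I plan to bypass this by using the pointwise modulus bound combined with the concentration $a_i \leq L$ to show that $\re[G(\gamma+iy)-G(\gamma)]$ is sufficiently negative away from zero to make the intermediate contribution negligible compared to $N^{-2}$.
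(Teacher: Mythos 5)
Your upper bound argument is sound: the modulus bound $|e^{\frac N2[G(\gamma+iy)-G(\gamma)]}|\le (L/|y|)^{N/2}$ for $|y|\ge L$, together with the trivial bound on $|y|\le L$, indeed gives $K\le C$.

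The lower bound, however, has a genuine gap, and you identify it yourself but the strategy you outline for closing it does not work. Your Taylor-expansion step correctly shows that the positive real contribution from $|y|\le\delta=N^{-2}$ is $\gtrsim N^{-2}$. But the complementary claim that $\re[G(\gamma+iy)-G(\gamma)]$ is ``sufficiently negative away from zero'' fails precisely on the intermediate range. Writing $G(\gamma+iy)-G(\gamma)=-\phi(y)+\ii\psi(y)$ with $\phi(y)=\frac1{2N}\sum_i\log(1+y^2/a_i^2)$, the quantity $\frac N2\phi(y)\approx\frac{y^2}4\sum_i a_i^{-2}$ only reaches order one at the scale $y_*\sim(NG''(\gamma))^{-1/2}$. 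Since $G''(\gamma)\prec C_0^2 N^{2-3t/2}$ (and can be as small as $N^{3t-1}$), one has $y_*\gg N^{-2}$, so for $\delta\le|y|\lesssim y_*$ the modulus of the integrand is still $1+o(1)$ and \emph{not} small. Meanwhile the phase $\frac N2\psi(y)\approx\frac{y^3}{6}\sum_i a_i^{-3}$ reaches order one at the same scale $y_*$, so $\cos(\frac N2\psi(y))$ can take either sign on $[\delta,y_*]$ and the net contribution of this interval, whose length $y_*$ far exceeds $N^{-2}$, has no a priori sign. A pointwise modulus bound alone therefore cannot show that the intermediate range contributes less than $N^{-2}$ in absolute value; the shrunken window only shrinks the part you can guarantee is positive, not the part that threatens to cancel it.

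The paper (following Lemma 6.3 in Baik--Lee with constants adjusted for $\phi<1/2$) resolves this by deforming the vertical contour to the steepest-descent curve through $\gamma$, on which $\im G$ is constant so the integrand is real and of one sign, and then comparing balls of appropriately chosen radii (e.g. $B_{N^{-8+12\phi}}$ and $B_{N^{-19/2+13\phi}}$ in place of $B_{N^{-2}}$, $B_{N^{-3}}$) where the Taylor bounds from Lemma \ref{Low T Lemma 2} control the deviation from the saddle. That is the missing ingredient: a sign control over the whole contributing region, not just a small window, which the vertical contour does not give you directly. To salvage your route you would need either (a) the contour deformation, or (b) a monotonicity/oscillation argument (e.g. writing $K=2\int_0^\infty e^{-\frac N2\phi}\cos(\frac N2\psi)\,dy$ and exploiting that $\psi$ and $\phi$ are monotone with $\psi'(0)=\psi''(0)=0$) combined with quantitative bounds on where $\frac N2\psi$ first exceeds $\pi/2$; as stated, your proposal contains neither.
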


\begin{proof}
We follow the proof of Lemma 6.3 in \cite{baik2016fluctuations}, where the desired lemma was proved for the case $\phi=1/2$. While the most parts of the proof remain unchanged even if $\phi<1/2$, there are several changes in the constants appearing in the proof of lower bound of $K$, since Lemma \ref{Low T Lemma 2} in this paper does not exactly match its counterpart in \cite{baik2016fluctuations}. These changes are as follows:
\begin{itemize}
\item Equation (6.47) in \cite{baik2016fluctuations} changes to
\beq
	|\Omega|\le\sum_{j=4}^\infty\frac{C_0^jN^{j-2\phi+3\epsilon}}{N^{4\phi-1-8\epsilon}}|X+\mathrm{i}Y|^{j-1}\le 2C_0^4N^{6\phi-3+11\epsilon}(X^2+Y^2)
\eeq 
for $z=(X+\gamma)+\mathrm{i}Y\in\Gamma^+\cap\bar{B}_{N^{-8+12\phi}}$, for all $N>{(2C_0)}^{\frac{1}{(7-12\phi)}}$. The ball $B_{N^{-2}}$ in Lemma 6.3 in \cite{baik2016fluctuations} is replaced by $B_{N^{-8+12\phi}}$.
\item Equation (6.48) in \cite{baik2016fluctuations} becomes
\beq
	\frac{1}{C_0^2}N^{8\phi-3-15\epsilon}\le\alpha\le C_0^3N^{4-6\phi+11\epsilon}.
\eeq
\item Equation (6.49) in \cite{baik2016fluctuations} becomes
\beq
	N^{6\phi-3+11\epsilon}X^2\ll \alpha X^2\ll X,\quad N^{6\phi-3+11\epsilon}Y^2\ll \alpha Y^2.
\eeq
\item Equation (6.54) in \cite{baik2016fluctuations} becomes
\beq
	G(\gamma)-G(z_2)\ge\frac{1}{8}N^{28\phi-17-8\epsilon}.
\eeq
\item In Equation (6.55) in \cite{baik2016fluctuations}, $B_{N^{-3}}$ is replaced by $B_{N^{-\frac{19}{2}+13\phi}}$. Then, equation (6.55) becomes
\beq
	G(\gamma)-G(z_3)\le2C_0^2N^{-17+24\phi+3\epsilon}
\eeq
for all large $N$ where $C_0$ is the constant in Lemma \ref{Low T Lemma 2}.
\item Equation (6.59) becomes
\beq
	\frac{1}{2}K\ge CN^{-\frac{19}{2}+13\phi}\left[\mathrm{exp}(-C_0^2N^{-16+24\phi+3\epsilon})-\mathrm{exp}(-\frac{1}{16}N^{-16+28\phi-8\epsilon})\right]\ge CN^{-\frac{51}{2}+41\phi-8\epsilon}.
\eeq
\end{itemize}
\end{proof}

We now prove our main result on the low temperature regime, Theorem \ref{thm:sup}.
\begin{proof}[Proof of Theorem \ref{thm:sup}]
Proceeding as in the proof of Theorem \ref{thm:sup}, with Proposition \ref{prop:integral}, Lemma \ref{Low T Lemma 3}, and Equation \eqref{eq:C_N}, we find that
\beq
	Z_N=\frac{\sqrt{N}\beta}{\mathrm{i}\sqrt{\pi}(2\beta e)^{N/2}}e^{\frac{N}{2}G(\gamma)}K(1+O(N^{-1}))
\eeq
with high probability, where $N^{-C}\le K\le C$. Thus, using Lemma \ref{Low T Lemma 2}, we find that
\begin{equation}\label{Low T Free Energy}
	\begin{split}
		F_N=\frac{1}{N}\log {Z_N}&=\frac{1}{2}\left[G(\gamma)-1-\log {(2\beta)}\right]+O(N^{-1}\log N)\\ &=\frac{1}{2}\left[2\beta\lambda_1-2\beta_c(\lambda_1-C_+)-\int_{C_-}^{C_+}
\log (C_+-x)\, d\nu (x)-1-\log (2\beta)\right]\\&\qquad\qquad+\frac{\mathcal{Z}}{4}+O_\prec(N^{-3t/2}+N^{-(\frac{1}{2}+\frac{5\phi}{3})}).
	\end{split}
\end{equation}
Define 
\beq\label{Low T Fbeta}
	F(\beta)=\beta C_+-\frac{1}{2}\left(\int_{C_-}^{C_+}
\log (C_+-x)\, \dd\nu (x)-1-\log (2\beta)\right).
\eeq
As in the high-temperature regime, we have from Proposition \ref{measure difference} that $F(\beta)=F_0(\beta)+O(N^{-2\phi})$. (See Appendix \ref{subsec:detail2} for the detail). Hence, from \eqref{Low T Free Energy},
\beq\label{FN-Fbeta}
	F_N-F(\beta)=(\beta-\beta_c)(\lambda_1-C_+)+\frac{\mathcal{Z}}{4}+O_\prec(N^{-3t/2}+N^{-(1/2+5\phi/3)}).
\eeq
Using Proposition \ref{cond:Limit of the Largest Eigenvalue} with \eqref{FN-Fbeta},
\beq
	F_N-F(\beta)=(\beta-\beta_c)N^{-2/3}\chi_{TW_1}+(\beta-\beta_c+\frac{1}{4})\mathcal{Z}+(\beta-\beta_c)\mathcal{E}+O_\prec(N^{-3t/2}+N^{-(1/2+5\phi/3)}).
\eeq
Recall that $\beta_c = \frac{1}{2} + O(N^{-\phi})$ as shown in \eqref{beta}. Since $N^{\phi+1/2}\mathcal{Z}\to \mathcal{N}(0,2\sigma^2)$, we conclude the proof of the desired theorem. 
\end{proof}


\section{Linear Spectral Statistics}\label{sec:LSS}
In this section, we provide a short proof of Proposition \ref{cond:Linear Statistics} based on the random variable $\mathcal{Z}$. 

As mentioned in Introduction, we may conjecture the asymptotic normality of the linear spectral statistics is due to the dominance of the random variable $\mathcal{Z}$ in the fluctuation of the eigenvalues. 

\begin{proof}[Proof of Proposition \ref{cond:Linear Statistics}]
Recall the rigidity estimate in Proposition \ref{cond:rigidity},
\[
	\left|\lambda_k-\gamma_{k}-\displaystyle{\frac{\gamma_{sc,k}}{2}}\mathcal{Z}\right|\prec N^{-2/3}\hat{k}^{-1/3}+N^{-\left(\frac{1}{2}+3\phi\right)}.
\]
Since $\mathcal{Z}\prec N^{-(\phi+\frac{1}{2})}$, by Taylor expanding $\varphi$,
\beq \label{Linear Statistics 1}
\begin{split}
	\frac{1}{N}\sum_{i=1}^{N}\varphi(\lambda_i)&=\frac{1}{N}\sum_{i=1}^{N}\left\{\varphi(\gamma_i)+\varphi '(\gamma_i)(\lambda_i-\gamma_i)+O(|\lambda_i-\gamma_i|^2)\right\}\\&=\frac{1}{N}\sum_{i=1}^{N}\left[\varphi(\gamma_i)+\varphi '(\gamma_i)(\lambda_i-\gamma_i)\right]+O_\prec(N^{-4/3}+N^{-1-6\phi})+O_\prec(|\mathcal{Z}|^2+N^{-2/3}|\mathcal{Z}|)\\&=\frac{1}{N}\left[\sum_{i=1}^{N}\varphi(\gamma_i)\right]+\frac{1}{N}\left[\sum_{i=1}^{N}\varphi '(\gamma_i)(\lambda_i-\gamma_i)\right]+O_\prec(N^{-4/3}+N^{-(1+2\phi)}).
\end{split}
\eeq
By the mean value theorem, for each $2\le i\le N$ and $\lambda\in [\gamma_{i-1}, \gamma_i]$, there exists a constant $c_\lambda$ such that 
\[
	\varphi(\lambda)=\varphi(\gamma_i)+\varphi'(c_\lambda)(\lambda-\gamma_i).
\]
From the assumption that $\varphi'$ is uniformly bounded,
\[
	|\varphi(\lambda)-\varphi(\gamma_i)|\le \alpha^2(\gamma_{i-1}-\gamma_i).
\]
Plus, by the definition of $\gamma_i$,\ $\displaystyle{\int_{\gamma_{j}}^{\gamma_{j-1}}\, \dd{\nu(x)}=\frac{1}{N}}$ for each $2\le j \le N$,
\beq\begin{split}
	\int_{C_-}^{C_+} \varphi(x)\, \dd{\nu(x)}&=\sum_{i=2}^{N}\int_{\gamma_i}^{\gamma_{i-1}}\varphi(x)\, \dd{\nu(x)}+O(N^{-1})\\& = \sum_{i=2}^{N}\left[\int_{\gamma_i}^{\gamma_{i-1}}[\varphi(x)-\varphi(\gamma_i)]\, \dd{\nu(x)}+\frac{\varphi(\gamma_i)}{N}\right]+O(N^{-1})
\end{split}\eeq
and
\beq
	\begin{split}
		\left|\sum_{i=2}^{N}\int_{\gamma_i}^{\gamma_{i-1}}[\varphi(x)-\varphi(\gamma_i)]\, \dd{\nu(x)}\right |&\le \sum_{i=2}^{N}\int_{\gamma_i}^{\gamma_{i-1}}|\varphi(x)-\varphi(\gamma_i)|\, \dd{\nu(x)} \le \sum_{i=2}^{N}\alpha^2(\gamma_{i-1}-\gamma_i)\int_{\gamma_i}^{\gamma_{i-1}}\, \dd{\nu(x)}\\&\le \frac{\alpha^2(C_+-C_-)}{N} =O(N^{-1}).
	\end{split}
\eeq
Since $\varphi$ is uniformly bounded by $\alpha$, $\varphi(\gamma_1)/N=O(N^{-1})$ and hence,
\beq \label{Integral to Sum}
	\int_{C_-}^{C_+} \varphi(x)\, \dd{\nu(x)}=\frac{1}{N}\sum_{i=1}^N\varphi(\gamma_i)+O(N^{-1}).
\eeq
Combining \eqref{Linear Statistics 1} and \eqref{Integral to Sum},
\beq \label{eq:sum_integral}
	\frac{1}{N}\sum_{i=1}^{N}\varphi(\lambda_i)-\int_{C_-}^{C_+} \varphi(x)\, \dd{\nu(x)}=\frac{1}{N}\left[\sum_{i=1}^{N}\varphi '(\gamma_i)(\lambda_i-\gamma_i)\right]+O(N^{-1}).
\eeq

To estimate the right-hand side of \eqref{eq:sum_integral}, we notice from the uniform boundedness of $\varphi '(\gamma_i)$ and the rigidity estimate that
\beq	
	\frac{1}{N}\left[\sum_{i=1}^{N}\varphi '(\gamma_i)(\lambda_i-\gamma_i)\right]=\frac{\mathcal{Z}}{N}\sum_{i=1}^N\varphi ' (\gamma_{i})\frac{\gamma_{sc,i}}{2}+O_\prec\left(N^{-1}+N^{-(\frac{1}{2}+3\phi)}\right).
\eeq
From Proposition \ref{measure difference}, it is not hard to see that $\gamma_i=\gamma_{sc,i}+O(N^{-2\phi})$. (See also Lemma 2.12 in \cite{he2021fluctuations}.) Thus,
\beq
	\begin{split}
		\frac{\mathcal{Z}}{N}\sum_{i=1}^N\varphi ' (\gamma_{i})\frac{\gamma_{sc,i}}{2}&=\frac{\mathcal{Z}}{N}\sum_{i=1}^N\left[\varphi ' (\gamma_{sc,i})+O(|\gamma_i-\gamma_{sc,i}|)\right]\frac{\gamma_{sc,i}}{2}\\&=\frac{\mathcal{Z}}{N}\left[\sum_{i=1}^{N}\varphi '(\gamma_{sc,i})\frac{\gamma_{sc,i}}{2}\right]+O(\mathcal{Z}N^{-2\phi}),
	\end{split}
\eeq
and we can conclude that
\beq\label{Linear Statistics Approximation 1} 
		\frac{1}{N}\sum_{i=1}^{N}\varphi(\lambda_i)-\int_{C_-}^{C_+} \varphi(x)\, \dd{\nu(x)}=\frac{\mathcal{Z}}{N}\sum_{k=1}^N \left[\varphi'(\gamma_{sc,k})\frac{\gamma_{sc,k}}{2}\right]+O_\prec\left(N^{-1}+N^{-(\frac{1}{2}+3\phi)}\right),
\eeq
where we also used that $\mathcal{Z}\prec N^{-(\phi+\frac{1}{2})}$. Considering the expectations of both sides, we find
\begin{align}
	\label{Linear Statistics Expectation1} \mathbb{E}\left[\frac{1}{N}\sum_{i=1}^{N}\varphi(\lambda_i)\right]&=\int_{C_-}^{C_+} \varphi(x)\, \dd{\nu(x)}+O_\prec\left(N^{-1}+N^{-(\frac{1}{2}+3\phi)}\right),\\ \label{Linear Statistics Expectation2} \frac{1}{N}\sum_{i=1}^{N}\varphi(\lambda_i)-\mathbb{E}\left[\frac{1}{N}\sum_{i=1}^{N}\varphi(\lambda_i)\right]&=\frac{\mathcal{Z}}{N}\sum_{k=1}^N \left[\varphi ' (\gamma_{sc,k})\frac{\gamma_{sc,k}}{2}\right].
\end{align}

For further calculation, we directly solve the equation for $\gamma_{sc,k}$,
\[
	\frac{1}{2\pi}\int_{\gamma_{sc,k}}^{2}\sqrt{4-x^2}\, \dd x=\frac{k}{N}
\] 
as follows. Define 
\[
	\Delta \gamma_{sc,k}:=\gamma_{sc,k}-\gamma_{sc,k-1}, \quad t_k:=\arccos \left(\frac{\gamma_{sc,k}}{2}\right), \quad \Delta t_k:=t_k-t_{k-1},
\]
where we let $\gamma_{sc,0}=2, t_0=0$. From the definition,
\[
	t_k-\cos {t_k}\sin {t_k}=\frac{\pi k}{N}.
\]
We then find 
\[
	(1-\cos {2t_k})\Delta t_k=2\sin^2{t_k}\Delta t_k=\frac{\pi}{N}, \quad \Delta \gamma_{sc,k}=-2\sin t_k\Delta t_k,
\]
which imply the following identity for $\Delta \gamma_{sc, k}$ and $\gamma_{sc, k}$:
\[
	\frac{1}{N\Delta \gamma_{sc,k}}=-\frac{\sin t_k}{\pi}=-\sqrt{1-\left(\frac{\gamma_{sc,k}}{2}\right)^2}.
\]
Using the identity above, we can approximate \eqref{Linear Statistics Approximation 1} by
\beq\label{LSS sum to integral}
	\begin{split}
		\frac{1}{N}\sum_{k=1}^N \varphi ' (\gamma_{sc,k})\frac{\gamma_{sc,k}}{2} &= \sum_{k=1}^N \varphi ' (\gamma_{sc,k})\left(\frac{\gamma_{sc,k}}{2}\right)\left(\frac{1}{N\Delta \gamma_{sc,k}}\right)\Delta \gamma_{sc,k} \\
		&=-\frac{1}{\pi}\sum_{k=1}^N \varphi ' (\gamma_{sc,k})\left(\frac{\gamma_{sc,k}}{2}\right)\sqrt{1-\left(\frac{\gamma_{sc,k}}{2}\right)^2}\Delta \gamma_{sc,k} \\
		&= \frac{1}{\pi}\int_{-2}^2 \varphi ' (x)\left(\frac{x}{2}\right)\sqrt{1-\left(\frac{x}{2}\right)^2}\, \ \dd x+O(N^{-1}) \\
		&=-\frac{1}{2\pi}\int_{-2}^2\varphi (x)\frac{2-x^2}{\sqrt{4-x^2}}\, \dd x+O(N^{-1}),
	\end{split}
\eeq
where we used the integration by parts to obtain the last equality.

Finally, recall that
\beq
	\frac{1}{\sqrt{2}\Sigma}\mathcal{Z}\xrightarrow{d}N(0,1), \quad \Sigma\asymp\frac{1}{N^{\phi+\frac{1}{2}}}
\eeq
and we have already assumed that $N^{\phi+\frac{1}{2}}\Sigma$ converges to $\sigma$ as $N\to\infty$.
Thus,  
\beq
	\frac{q}{\sqrt{N}}\sum_{i=1}^N\varphi(\lambda_i)-\mathbb{E}\left[\frac{q}{\sqrt{N}}\sum_{i=1}^N\varphi(\lambda_i)\right]=-\frac{N^{\phi+\frac{1}{2}}\mathcal{Z}}{2\pi}\int_{-2}^2\varphi (x)\frac{2-x^2}{\sqrt{4-x^2}}\, \dd x+O(N^{-\frac{1}{2}+\phi}\mathcal{Z})
\eeq
converges to a centered Gaussian random variable with the variance
\beq
	V[\varphi]=\frac{\sigma^2}{2\pi^2}\left(\int_{-2}^2\varphi(x)\frac{2-x^2}{\sqrt{4-x^2}}\, \dd{x} \right)^2.
\eeq

It remains to estimate the error term made by changing the measure $\nu$ to $\nu_{sc}$ in the integral $\int_{C_-}^{C_+}\varphi (x)\, \dd\nu (x)$. Following lemma shows that the order of error term is $O(N^{-2\phi})$:
\begin{lem}\label{Change Measure}
	For a function $\varphi$ smooth on an open set containing $\mathrm{[-2, 2]}$, 
\beq
	\int_{C_-}^{C_+}\varphi(x)\, \dd{\nu (x)}-\int_{-2}^{2}\varphi(x)\, \dd{\nu_{sc} (x)}=O(N^{-2\phi}).
\eeq
\end{lem}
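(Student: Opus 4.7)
The plan is to split the integral $\int_{C_-}^{C_+}\varphi(x)\,\dd\nu(x)$ according to whether $x$ lies in the common range $[-2,2]$ or in the boundary strips $[C_-,-2]\cup[2,C_+]$, and then compare against $\int_{-2}^{2}\varphi(x)\,\dd\nu_{sc}(x)$ using the two estimates supplied by Proposition \ref{cond:regular} and Proposition \ref{measure difference}.

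First, on the overlap $[-2,2]$, Proposition \ref{measure difference} gives
\beq
	\left|\int_{-2}^{2}\varphi(x)\bigl(\dd\nu(x)-\dd\nu_{sc}(x)\bigr)\right|
	\le C N^{-2\phi}\,\|\varphi\|_{\infty,[-2,2]}\int_{-2}^{2}\frac{\dd x}{\sqrt{4-x^2}} = O(N^{-2\phi}),
\eeq
since $\int_{-2}^{2}(4-x^2)^{-1/2}\,\dd x=\pi$ is finite and $\varphi$ is uniformly bounded on a neighborhood of $[-2,2]$ (which contains $[C_-,C_+]$ for $N$ large by Proposition \ref{cond:regular}).

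Next, I would control the contribution from the boundary strips. Since $C_+=-C_-=2+O(N^{-2\phi})$, the square root decay \eqref{eq:sqrtb} of $\dd\nu/\dd x$ at $C_+$ and the analogous decay at $C_-$ yield
\beq
	\int_{2}^{C_+}|\varphi(x)|\,\dd\nu(x)\le \|\varphi\|_{\infty}\cdot s_{\nu}\int_{2}^{C_+}\sqrt{C_+-x}\,\dd x\,(1+o(1))= O\bigl((C_+-2)^{3/2}\bigr)=O(N^{-3\phi}),
\eeq
and similarly for $\int_{C_-}^{-2}|\varphi(x)|\,\dd\nu(x)$. Combining this with the bulk estimate gives
\beq
	\int_{C_-}^{C_+}\varphi(x)\,\dd\nu(x)-\int_{-2}^{2}\varphi(x)\,\dd\nu_{sc}(x) = O(N^{-2\phi})+O(N^{-3\phi})=O(N^{-2\phi}),
\eeq
which is the claim.

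The only nontrivial input here is Proposition \ref{measure difference}; once that pointwise bound on $|\dd\nu-\dd\nu_{sc}|$ is in hand, the rest is just noting that the singularity $(4-x^2)^{-1/2}$ is integrable and that the boundary strips shrink fast enough (even with the square-root density) that their contribution is of lower order. No delicate cancellation or steepest-descent argument is needed.
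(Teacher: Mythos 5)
Your proof is correct and follows essentially the same route as the paper's: split the integral over $[C_-,C_+]$ into the bulk $[-2,2]$ (controlled by Proposition \ref{measure difference} and the integrability of $(4-x^2)^{-1/2}$) and the boundary strips $[C_-,-2]\cup[2,C_+]$ (controlled by their $O(N^{-2\phi})$ length). The only cosmetic difference is that you invoke the square-root edge decay to get the sharper $O(N^{-3\phi})$ bound on the strips, while the paper is content with the cruder $O(N^{-2\phi})$ bound from interval length alone; since the bulk already contributes $O(N^{-2\phi})$, the refinement is immaterial.
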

This concludes the proof of Proposition \ref{cond:Linear Statistics}.
\end{proof}

\begin{proof}[Proof of Lemma \ref{Change Measure}]
Recall that $\varphi$ is smooth on $[C_-,C_+]$ for any sufficiently large $N$. Since $\varphi$ is bounded in $[C_-,-2]$ and $[2,C_+]$, both of which are intervals of length $O(N^{-2\phi})$,
\beq
	\int_{C_-}^{C_+}\varphi(x)\, \dd{\nu (x)}=\int_{-2}^{2}\varphi(x)\, \dd{\nu (x)}+O(N^{-2\phi})
\eeq
By Proposition \ref{measure difference}, we can change the measure into $\dd\nu_{sc}$ without altering the error term $O(N^{-2\phi})$. Further, since $1/\sqrt{4-x^2}$ is an integrable function of $x$,
\beq
		\left|\int_{-2}^{2}\varphi(x)\, \dd{\nu (x)}-\int_{-2}^{2}\varphi(x)\, \dd{\nu_{sc} (x)}\right|\le CN^{-2\phi}\int_{-2}^{2}\frac{|\varphi(x)|}{\sqrt{4-x^2}}\, \dd x=O(N^{-2\phi}).
\eeq
This concludes the proof of the desired lemma.
\end{proof}

\begin{rem}
	It is clear that \eqref{Linear Statistics Expectation1} holds for a function $\varphi$ that depends on $N$, as long as $\varphi$ satisfies the assumptions in Proposition \ref{cond:Linear Statistics}.
\end{rem}


\subsection*{Acknowledgments}
The work was partially supported by National Research Foundation of Korea under grant number NRF-2019R1A5A1028324 and NRF-2023R1A2C1005843.


\appendix

\section{Technical details} \label{sec:detail}

In this appendix, we provide several technical details in Sections \ref{sec:high} and \ref{sec:low}.

\subsection{Details in Proof of Theorem \ref{thm:sup}} \label{subsec:detail1}

We justify the use of Proposition \ref{cond:Linear Statistics} in the proof of Theorem \ref{thm:sup} where $\varphi(x)=\log (\wh\gamma-x)$ possibly depends on $N$. Since $\wh\gamma=2\beta+\frac{1}{2\beta}+O(N^{-2\phi})$ and $2\beta+\frac{1}{2\beta}$ is uniformly bounded away from $\gamma_{sc,k}$ for all $k$,
\beq \label{Additional_High T}
	\varphi'(\gamma_{sc,k})=-\frac{1}{\wh\gamma-\gamma_{sc,k}}=-\frac{1}{(2\beta+\frac{1}{2\beta})-\gamma_{sc,k}+O(N^{-2\phi})}=-\frac{1}{(2\beta+\frac{1}{2\beta})-\gamma_{sc,k}}+O(N^{-2\phi}).
\eeq
Thus, we find from \eqref{Linear Statistics Expectation2} that
\beq \label{eq:app1}
	\frac{1}{N}\sum_{i=1}^{N}\varphi(\lambda_i)-\mathbb{E}\left[\frac{1}{N}\sum_{i=1}^{N}\varphi(\lambda_i)\right]=-\frac{\mathcal{Z}}{N}\sum_{k=1}^N \left[\frac{1}{(2\beta+\frac{1}{2\beta}-\gamma_{sc,k})}\frac{\gamma_{sc,k}}{2}\right]+O(\mathcal{Z}N^{-2\phi})
\eeq
It in particular shows that the left-hand side of \eqref{eq:app1} converges to a centered Gaussian whose variance is given by (the left-hand side of) \eqref{eq:variance_integral}.

To evaluate the integral in \eqref{eq:variance_integral}, we set $a:=2\beta+\frac{1}{2\beta}-2$. By the definition of the Stieltjes transform of the semicircle measure,
\beq
	\int_{-2}^2\frac{\dd\nu_{sc}(x)}{2+a-x}= m_{sc}(2+a) = \frac{2+a-\sqrt{a(4+a)}}{2},
\eeq
hence by integration by parts,
\beq\begin{split}
	\int_{-2}^2\log (2+a-x)\frac{2-x^2}{\sqrt{4-x^2}}\, \dd x&=\pi\int_{-2}^2\frac{x}{2+a-x}\, \dd\nu_{sc}(x) \\&=\pi (2+a)\int_{-2}^2\frac{\dd\nu_{sc}(x)}{2+a-x}-\pi =4\pi\beta^2.
\end{split}\eeq

\subsection{Evaluation of $F(\beta)$} \label{subsec:detail2}

We want to evaluate $F(\beta)$ defined in \eqref{High T Fbeta} for the high-temperature regime and in \eqref{Low T Fbeta} for the low-temperature regime. We use the notation of $\widehat{\gamma}_{sc}=2\beta+\frac{1}{2\beta}$, which is the counterpart of $\widehat{\gamma}$ when $\nu=\nu_{sc}$. For $F(\beta)$ in the high-temperature regime, we note that
\beq
	\int_{C_-}^{C_+} \log (\widehat\gamma - x) \dd \nu (x) - \int_{C_-}^{C_+} \log (\widehat\gamma_{sc} - x) \dd \nu (x)=\int_{C_-}^{C_+} \log \left(1+\frac{\widehat{\gamma}-\widehat{\gamma}_{sc}}{\widehat{\gamma}_{sc}-x}\right) \dd \nu (x).
\eeq
There exists an $N$-independent constant $c$ such that $\widehat{\gamma}_{sc}-C_+ > c > 0$. Hence, by \eqref{eq:wh_gamma_approx},  
\beq
	\frac{\widehat{\gamma}-\widehat{\gamma}_{sc}}{\widehat{\gamma}_{sc}-x}=O(N^{-2\phi})
\eeq
uniformly for $x\in[C_-,C_+]$, and
\beq
	\int_{C_-}^{C_+} \log (\widehat\gamma - x) \dd \nu (x) - \int_{C_-}^{C_+} \log (\widehat\gamma_{sc} - x) \dd \nu (x)=O(N^{-2\phi}).
\eeq
Applying Lemma \ref{Change Measure} for $\varphi (x)=\log (\widehat{\gamma}_{sc}-x)$, we thus have
\beq \label{Integral estimate1}
		\int_{C_-}^{C_+} \log (\widehat\gamma - x) \dd \nu (x)-\int_{-2}^{2} \log (\widehat\gamma_{sc} - x) \dd \nu_{sc}(x)=O(N^{-2\phi}).
\eeq
For $\beta < 1/2$, it was shown in Appendix in \cite{baik2016fluctuations} that
\beq \label{L sub010}
	\beta \widehat\gamma_{sc}(\beta) - \frac{1}{2} \left( \int_{-2}^{2} \log (\widehat\gamma_{sc}(\beta) - x) \dd \nu_{sc}(x) + 1 + \log (2\beta) \right) = \beta^2,
\eeq
and we can readily check that $F(\beta) = F_0(\beta) + O(N^{-2\phi})$.

Now, we evaluate $F(\beta)$ in the low-temperature regime. We consider the decomposition
\[
	\int_{C_-}^{C_+} \log (C_+-x)\, \dd\nu (x)=\int_{C_-}^{-2} \log (C_+-x)\, \dd\nu (x)+\int_{-2}^{2} \log (C_+-x)\, \dd\nu (x)+\int_{2}^{C_+} \log (C_+-x)\, \dd\nu (x).
\]
By Proposition \ref{cond:regular}, $C_+=-C_-=2+O(N^{-2\phi})$ and $\nu$ exhibits square-root decay. Hence,
\beq
	\int_{2}^{C_+} \log (C_+-x)\, \dd\nu (x)=O(N^{-3\phi}\log N),\quad \int_{C_-}^{-2} \log (C_+-x)\, \dd\nu (x)=O(N^{-2\phi}).
\eeq
Further, applying Proposition \ref{measure difference},
\beq
	\left|\int_{-2}^{2} \log (C_+-x)\, \dd\nu (x)-\int_{-2}^{2} \log (C_+-x)\, \dd\nu_{sc} (x)\right|\le CN^{-2\phi}\int_{-2}^2\frac{|\log (C_+-x)|}{\sqrt{4-x^2}}\, \dd x = O(N^{-2\phi}).
\eeq
It is a simple integration that
\beq\label{Integration}
	\int_{-2}^2\log (2-x+a)\, \dd\nu_{sc} (x) = \frac{1}{2}+a+\frac{a^2}{4}-\frac{(a+2)\sqrt{a(a+4)}}{4}+\log\left(1+\frac{a+\sqrt{a(a+4)}}{2}\right)
\eeq
for $a>0$. By the Taylor expansion, the right hand side of \eqref{Integration} for small $a>0$ is $\frac{1}{2}+a+O(a\sqrt{a})$. Thus, $C_+=2+O(N^{-2\phi})$ implies that
\beq
	\int_{-2}^{2} \log (C_+-x)\, \dd\nu_{sc} (x)-\int_{-2}^{2} \log (2-x)\, \dd\nu_{sc} (x) = O(N^{-2\phi}).
\eeq
Hence,
\beq\label{Integral estimate}
	\int_{C_-}^{C_+} \log (C_+-x)\, \dd\nu (x)-\int_{-2}^{2} \log (2-x)\, \dd\nu_{sc} (x)=O(N^{-2\phi}).
\eeq
For $\beta > 1/2$, it was shown in Appendix in \cite{baik2016fluctuations} that
\beq \label{eq:L sup}
	2\beta  - \frac{1}{2} \left( \int_{-2}^{2} \log (2 - x) \dd \nu_{sc}(x) + 1 + \log (2\beta) \right) = 2\beta-\frac{\log (2\beta)+\frac{3}{2}}{2}
\eeq
from which we can immediately find that $F(\beta) = F_0(\beta) + O(N^{-2\phi})$.

\subsection{Details in Proof of Lemma \ref{Low T Lemma 2}} \label{subsec:detail3}

Recall the definition of $C_+, \gamma_i, \gamma_{sc,i}$. We want to estimate
\[
	\frac{1}{N}\sum_{i=1}^N \frac{\gamma_{sc,i}/2}{C_+-\gamma_i}.
\]
We first decompose the sum into 
\beq
	\frac{1}{N}\sum_{i=1}^N \frac{\gamma_{sc,i}/2}{C_+-\gamma_i}=\frac{1}{N}\sum_{i=1}^{N^{1-2\phi}} \frac{\gamma_{sc,i}/2}{C_+-\gamma_i}+\frac{1}{N}\sum_{i=N^{1-2\phi}+1}^N \frac{\gamma_{sc,i}/2}{C_+-\gamma_i}.
\eeq
Using \eqref{eq:classicalfromC}, the first term in the right hand side can be estimated as
\beq
	\left|\frac{1}{N}\sum_{i=1}^{N^{1-2\phi}} \frac{\gamma_{sc,i}/2}{C_+-\gamma_i}\right|\le\frac{C}{N}\sum_{i=1}^{N^{1-2\phi}}i^{-2/3}N^{2/3}=O(N^{-2\phi/3}).
\eeq
We estimate the second term by using $C_+=2+O(N^{-2\phi})$ and $\gamma_i=\gamma_{sc,i}+O(N^{-2\phi})$. We have
\beq
	\begin{split}\label{Corollary-1}
		\frac{1}{N}\sum_{i=N^{1-2\phi}+1}^N \frac{\gamma_{sc,i}/2}{C_+-\gamma_i}&=\frac{1}{N}\sum_{i=N^{1-2\phi}+1}^N \frac{\gamma_{sc,i}/2}{2-\gamma_{sc,i}+O(N^{-2\phi})}\\&=\frac{1}{2N}\sum_{i=N^{1-2\phi}+1}^N\frac{\gamma_{sc,i}/2}{1-(\gamma_{sc,i}/2)}\left(\frac{1}{1+\left(\frac{O(N^{-2\phi})}{1-(\gamma_{sc,i}/2)}\right)}\right).
	\end{split}
\eeq
We need to estimate $(1-\frac{\gamma_{sc,i}}{2})$ for $N^{1-2\phi}<i\le N$. Clearly, it increases as $i$ increases. By definition, $\gamma_{sc,i}$ satisfies
\beq\label{Corollary-2}
	\arccos(\frac{\gamma_{sc,i}}{2})-\frac{\gamma_{sc,i}}{2}\sqrt{1-\left(\frac{\gamma_{sc,i}}{2}\right)^2}=\frac{\pi}{N}i.
\eeq
Set 
\[
	x := \arccos \left( \frac{\gamma_{sc,N^{1-2\phi}}}{2} \right).
\]
Then, Equation \eqref{Corollary-2} becomes 
\beq\label{Corollary-3}
	x-\cos x\sin x=\pi N^{-2\phi},
\eeq
and by Taylor expanding it,
\beq
	\frac{2}{3}x^3+O(x^5)=\pi N^{-2\phi}.
\eeq
Then, 
\beq
	1-\frac{\gamma_{sc,N^{1-2\phi}}}{2}=1-\cos x=\frac{x^2}{2}+O(x^4)=\sqrt[3]{\frac{9\pi^2}{32}}N^{-4\phi/3}+o(1).
\eeq
Thus, $1-(\gamma_{sc,i}/2)\ge CN^{-4\phi/3}$ for $N^{1-2\phi}< i \le N$ and \eqref{Corollary-1} can be written as
\beq
	\frac{1}{2N}\sum_{i=N^{1-\phi}+1}^N\frac{\gamma_{sc,i}/2}{1-(\gamma_{sc,i}/2)}\left(1+O(N^{-2\phi/3})\right).
\eeq
Similarly,
\begin{align}
	\left|\frac{1}{2N}\sum_{i=1}^{N^{1-2\phi}}\frac{\gamma_{sc,i}/2}{1-(\gamma_{sc,i}/2)}\right|\le\frac{C}{2N}\sum_{i=1}^{N^{1-2\phi}}i^{-2/3}N^{2/3}=O(N^{-2\phi/3}).
\end{align}
Thus, we can conclude that
\beq
	\frac{1}{N}\sum_{i=1}^N \frac{\gamma_{sc,i}/2}{C_+-\gamma_i}=\frac{1}{2N}\sum_{i=1}^N \frac{\gamma_{sc,i}/2}{1-(\gamma_{sc,i}/2)}+O(N^{-2\phi/3}). 
\eeq
Recall that we have shown in \eqref{LSS sum to integral} that 
\beq
	\frac{1}{N}\sum_{k=1}^N \varphi'(\gamma_{sc,k})\frac{\gamma_{sc,k}}{2} =\frac{1}{\pi}\int_{-2}^2 \varphi ' (x)\left(\frac{x}{2}\right)\sqrt{1-\left(\frac{x}{2}\right)^2}\, \ \dd x+O(N^{-1}).
\eeq
Thus, 
\beq
	\frac{1}{2N}\sum_{k=1}^N\frac{\gamma_{sc,k}/2}{1-(\gamma_{sc,k}/2)}=\frac{1}{2\pi}\int_{-2}^2\ \frac{x}{2-x}\sqrt{1-\left(\frac{x}{2}\right)^2}\, \ \dd x+O(N^{-1})=\frac{1}{2}+O(N^{-1})
\eeq
and we can conclude that
\[
	\frac{1}{N}\sum_{i=1}^N \frac{\gamma_{sc,i}/2}{C_+-\gamma_i}=\frac{1}{2}+O(N^{-2\phi/3}).
\]

\section{Proof of Proposition \ref{measure difference}}\label{Proof Measure Difference}
In this appendix, we prove Proposition \ref{measure difference}.
\begin{proof}[Proof of Proposition \ref{measure difference}]
The Proposition \ref{cond:regular} says that $m(z)$ satisfies $1+zm+m^2+N\Sigma^2m^4=0$ with $N\Sigma^2=\frac{\sigma^2}{q^2}+o(N^{-2\phi})$. Define a new function $t(z)$ as
\[
	t(z):=m(z)-m_{sc}(z).
\]
Then,
\beq\label{eq:t-1}
	1+z(m_{sc}+t)+(m_{sc}+t)^2+N\Sigma^2(m_{sc}+t)^4=0,
\eeq
which is
\beq\label{eq:t-2}
	N\Sigma^2m_{sc}^4+(z+2m_{sc})t+t^2=-N\Sigma^2t(4m_{sc}^3+6m_{sc}^2t+4m_{sc}t^2+t^3).
\eeq
We define a new function $f(z)$ as 
\beq
	f(z):=t(z)(4m_{sc}^3+6m_{sc}^2t+4m_{sc}t^2+t^3),
\eeq
which makes the right hand side of \eqref{eq:t-2} as $-N\Sigma^2f(z)$.
By the universality, $t(z)=o(1)$ for each $z$ as $N$ goes to large and the formula of $m_{sc}(z)=(-z+\sqrt{z^2-4})/2$ implies that $|m_{sc}(z)|=O(1)$ uniformly regardless of $z$, and $f(z)=o(1)$. Now, fixing $z$ and a large $N$, we can obtain the recursion formula of $t(z)$ from \eqref{eq:t-2} as
\beq\label{eq:t-3}
	t(z)=\frac{1}{2}\left(-\sqrt{z^2-4}+\sqrt{z^2-4-4N\Sigma^2(m_{sc}^4+f(z))}\right).
\eeq
There are two cases of $z$ based on whether or not $|z^2-4|\ge 4N\Sigma^2|m_{sc}^4+f(z)|$. Recall the simple fact of $|\sqrt{z}|=\sqrt{|z|}$ for a complex number $z$.
\begin{itemize}
	\item If $|z^2-4|\ge 4N\Sigma^2|m_{sc}^4+f(z)|$, then we can do the Taylor expansion of \eqref{eq:t-3}. Hence,
	\beq\label{Taylor expansion of t}
		\begin{split}
			&\left|\sqrt{z^2-4-4N\Sigma^2(m_{sc}^4+f(z))}-\sqrt{z^2-4}\right|\\&=\left|-\frac{2N\Sigma^2(m_{sc}^4+f(z))}{\sqrt{z^2-4}}+\frac{(4N\Sigma^2(m_{sc}^4+f(z)))^2}{(z^2-4)^{3/2}}\sum_{k=2}^\infty\left[\prod_{j=1}^k\left(\frac{j-1-\frac{1}{2}}{j}\right)\right]\left(\frac{4N\Sigma^2(m_{sc}^4+f(z))}{z^2-4}\right)^{k-2}\right|\\&\le \frac{2N\Sigma^2|m_{sc}^4+f(z)|}{\sqrt{|z^2-4|}}+\frac{(4N\Sigma^2|m_{sc}^4+f(z)|)^2}{|z^2-4|^{3/2}}\sum_{k=2}^\infty\left[\prod_{j=1}^k\left|\frac{j-1-\frac{1}{2}}{j}\right|\right]\left|\frac{4N\Sigma^2(m_{sc}^4+f(z))}{z^2-4}\right|^{k-2}\\&\le\sqrt{N\Sigma^2|m_{sc}^4+f(z)|}+2\sqrt{N\Sigma^2|m_{sc}^4+f(z)|}\sum_{k=2}^\infty\left[\prod_{j=1}^k\left|\frac{j-1-\frac{1}{2}}{j}\right|\right].
		\end{split}
	\eeq
When we define a positive sequence $a_n=\prod_{j=1}^n\left|\frac{j-1-\frac{1}{2}}{j}\right|$ for $n\ge 2$, then
\[
	\frac{a_n}{a_{n+1}}=\frac{n+1}{n-\frac{1}{2}},\ \mathrm{satisfying}\ \lim_{n\to\infty}\frac{a_n}{a_{n+1}}=1\ \mathrm{and}\ \lim_{n\to\infty}n\left(\frac{a_n}{a_{n+1}}-1\right)=\frac{3}{2}>1.
\]
Hence, Raabe's test shows that the series is convergent and
\beq\label{bound of |t|-1}
	|t(z)|\le C\sqrt{N\Sigma^2|m_{sc}^4+f(z)|}=O(N^{-\phi})
\eeq
by $N\Sigma^2=\frac{\sigma^2}{q^2}+o(N^{-2\phi})$, and $|m_{sc}(z)|=O(1)$, $f(z)=o(1)$.

	\item If $|z^2-4|\le 4N\Sigma^2|m_{sc}^4+f(z)|$, then 
		\beq\label{bound of |t|-2}\begin{split}
			|t(z)|&\le \frac{1}{2}\left(\sqrt{|z^2-4|}+\sqrt{\left|z^2-4-4N\Sigma^2(m_{sc}^4+f(z))\right|}\right)\\&\le \frac{1}{2}\left(\sqrt{|z^2-4|}+\sqrt{|z^2-4|+4N\Sigma^2\left|m_{sc}^4+f(z)\right|}\right)\\&\le C\sqrt{N\Sigma^2|m_{sc}^4+f(z)|}=O(N^{-\phi}).
			\end{split}
		\eeq
\end{itemize}
The first part of the Proposition \ref{measure difference} is shown by combining \eqref{bound of |t|-1} and \eqref{bound of |t|-2}.

Now, we prove the second part of the proposition. Define two real-valued functions $a(z),\ b(z)$ as the real, imaginary part of $t(z)$, respectively. Then, \eqref{eq:t-2} becomes
\beq\begin{split}\label{eq:a and b}
		&N\Sigma^2m_{sc}^4+\sqrt{z^2-4}(a(z)+\ii b(z))+(a(z)+\ii b(z))^2\\&=-N\Sigma^2(a(z)+\ii b(z))\left(4m_{sc}^3+6m_{sc}^2(a(z)+\ii b(z))+4m_{sc}(a(z)+\ii b(z))^2+(a(z)+\ii b(z))^3\right).
\end{split}\eeq
Now, we try to restrict the domain into $-2\le x\le 2$, setting $x=\mathrm{Re}(z)$, $y=\mathrm{Im}(z)$ and taking the limit of $y\searrow 0$ on the both side of \eqref{eq:a and b}. To do this, we confine the domain of all functions into a compact region containing $\{x+\ii y: -2-\delta<x<2+\delta,\ -\delta<y<\delta \}$ for some $\delta>0$. Then, \eqref{bound of |t|-1} and \eqref{bound of |t|-2} says that $|a(z)|,\ |b(z)|\le |a(z)+\ii b(z)|\le C\sqrt{N\Sigma^2|m_{sc}^4+f(z)|}$ and the domain is restricted to the compact set so there is a constant $C'$ independent on neither $z$ nor $N$ such that $|a(z)|, |b(z)|\le C'/q$. Denote
\beq\label{eq:A and B}
	A(x):=\lim_{y\searrow 0} a(z), \qquad B(x):=\lim_{y\searrow 0} b(z).
\eeq
Then \eqref{eq:A and B} implies that 
\beq\label{estimate A, B}
	|A(x)|,\ |B(x)|\le C'/q
\eeq
for $-2\le x\le 2$. 
Using the fact that $\lim_{y\searrow 0}\sqrt{z^2-4}=\ii \sqrt{4-x^2}$ for $-2\le x\le 2$ (pointwisely,) the real part of \eqref{eq:a and b} after taking the limit becomes
\beq\label{eq:real part}
	\frac{N\Sigma^2}{2}(x^4-4x^2+2)-B\sqrt{4-x^2}+A^2-B^2=-N\Sigma^2\mathrm{Re}\left[\lim_{y\searrow 0}t(z)\left(4m_{sc}^3+6m_{sc}^2t(z)+4m_{sc}t(z)^2+t(z)^3\right)\right].
\eeq
We consider the compact domain so there is a constant $C''$ independent on neither $z$ nor $N$ such that
\beq\label{the other terms}
	|t(z)\left(4m_{sc}^3+6m_{sc}^2t(z)+4m_{sc}t(z)^2+t(z)^3\right)|\le C''N^{-\phi}
\eeq
uniformly in the domain. \eqref{estimate A, B}, \eqref{eq:real part}, and \eqref{the other terms} show that for $-2\le x\le 2$,
\beq\label{estimate of B}
	|B(x)|\sqrt{4-x^2}=\left|A^2-B^2+\frac{N\Sigma^2}{2}(x^4-4x^2+2)\right|+C''(N\Sigma^2)N^{-\phi}\le\frac{C}{q^2}.
\eeq
The inverse Stieltjes transform $\dd \nu (x)-\dd \nu_{sc}(x)=\frac{1}{\pi}B(x) \dd x$ and \eqref{estimate of B} proves the Proposition \ref{measure difference}.
\end{proof}

\begin{rem}
	We do not use \eqref{estimate A, B} in the proof of the main theorems but combining \eqref{estimate A, B} with \eqref{estimate of B} can show more refined version of Proposition \ref{measure difference}: there are constants $C_1,\ C_2$ such that
\[
	|\dd \nu(x)-\dd \nu_{sc}(x)|\le\min\left\{C_1N^{-\phi},\ C_2\frac{N^{-2\phi}}{\sqrt{4-x^2}}\right\}|\dd x|
\]
for $-2\le x\le 2$. We can heuristically check that two terms have same order around $2-|x| = O(N^{-2\phi})$.
\end{rem}


\begin{thebibliography}{10}

\bibitem{aizenman1987some}
M.~Aizenman, J.~L. Lebowitz, and D.~Ruelle.
\newblock Some rigorous results on the {S}herrington--{K}irkpatrick spin glass
  model.
\newblock {\em Communications in Mathematical Physics}, 112:3--20, 1987.

\bibitem{baik2021spherical}
J.~Baik, E.~Collins-Woodfin, P.~Le~Doussal, and H.~Wu.
\newblock Spherical spin glass model with external field.
\newblock {\em Journal of Statistical Physics}, 183(2):31, 2021.

\bibitem{baik2016fluctuations}
J.~Baik and J.~O. Lee.
\newblock Fluctuations of the free energy of the spherical
  {S}herrington--{K}irkpatrick model.
\newblock {\em Journal of Statistical Physics}, 165(2):185--224, 2016.

\bibitem{baik2017fluctuations}
J.~Baik and J.~O. Lee.
\newblock Fluctuations of the free energy of the spherical
  {S}herrington--{K}irkpatrick model with ferromagnetic interaction.
\newblock {\em Annales Henri Poincar{\'e}}, 18(6):1867--1917, 2017.

\bibitem{baik2020free}
J.~Baik and J.~O. Lee.
\newblock Free energy of bipartite spherical {S}herrington--{K}irkpatrick
  model.
\newblock {\em Annales de l’Institut Henri Poincar{\'e}-Probabilit{\'e}s et
  Statistiques}, 56(4):2897--2934, 2020.

\bibitem{Applicationmethodology}
P.~J. Bickel and P.~Sarkar.
\newblock Hypothesis testing for automated community detection in networks.
\newblock {\em Journal of the Royal Statistical Society. Series B (Statistical
  Methodology)}, 78(1):253--273, 2016.

\bibitem{chung2019weak}
H.~W. Chung and J.~O. Lee.
\newblock Weak detection of signal in the spiked {W}igner model.
\newblock In {\em International Conference on Machine Learning}, pages
  1233--1241. PMLR, 2019.

\bibitem{crisanti1992spherical}
A.~Crisanti and H.-J. Sommers.
\newblock The spherical p-spin interaction spin glass model: the statics.
\newblock {\em Zeitschrift f{\"u}r Physik B Condensed Matter}, 87(3):341--354,
  1992.

\bibitem{edwards1975theory}
S.~F. Edwards and P.~W. Anderson.
\newblock Theory of spin glasses.
\newblock {\em Journal of Physics F: Metal Physics}, 5(5):965, 1975.

\bibitem{el2020fundamental}
A.~El~Alaoui, F.~Krzakala, and M.~Jordan.
\newblock Fundamental limits of detection in the spiked {W}igner model.
\newblock {\em The Annals of Statistics}, 48(2):863--885, 2020.

\bibitem{erdHos2012spectral}
L.~Erd{\H{o}}s, A.~Knowles, H.-T. Yau, and J.~Yin.
\newblock Spectral statistics of {E}rd{\H{o}}s--{R}{\'e}nyi graphs {II}:
  Eigenvalue spacing and the extreme eigenvalues.
\newblock {\em Communications in Mathematical Physics}, 314(3):587--640, 2012.

\bibitem{erdos2013spectral}
L.~Erd{\"o}s, A.~Knowles, H.-T. Yau, and J.~Yin.
\newblock Spectral statistics of {E}rd{\H{o}}s--{R}{\'e}nyi graphs {I}: Local
  semicircle law.
\newblock {\em Annals of Probability}, 41(3), 2013.

\bibitem{erdHos2012rigidity}
L.~Erd{\H{o}}s, H.-T. Yau, and J.~Yin.
\newblock Rigidity of eigenvalues of generalized {W}igner matrices.
\newblock {\em Advances in Mathematics}, 229(3):1435--1515, 2012.

\bibitem{guerra2003broken}
F.~Guerra.
\newblock Broken replica symmetry bounds in the mean field spin glass model.
\newblock {\em Communications in Mathematical Physics}, 233:1--12, 2003.

\bibitem{he2021fluctuations}
Y.~He and A.~Knowles.
\newblock Fluctuations of extreme eigenvalues of sparse
  {E}rd{\H{o}}s--{R}{\'e}nyi graphs.
\newblock {\em Probability Theory and Related Fields}, 180(3):985--1056, 2021.

\bibitem{huang2020transition}
J.~Huang, B.~Landon, and H.-T. Yau.
\newblock Transition from {T}racy--{W}idom to {G}aussian fluctuations of
  extremal eigenvalues of sparse {E}rd{\H{o}}s--{R}{\'e}nyi graphs.
\newblock {\em The Annals of Probability}, 48(2):916--962, 2020.

\bibitem{huang2022edge}
J.~Huang and H.-T. Yau.
\newblock Edge universality of sparse random matrices.
\newblock {\em arXiv preprint arXiv:2206.06580}, 2022.

\bibitem{janson2011random}
S.~Janson, A.~Rucinski, and T.~Luczak.
\newblock {\em Random Graphs}.
\newblock Wiley Series in Discrete Mathematics and Optimization. Wiley, 2011.

\bibitem{johnstone2021spin}
I.~M. Johnstone, Y.~Klochkov, A.~Onatski, and D.~Pavlyshyn.
\newblock Spin glass to paramagnetic transition in spherical
  {S}herrington--{K}irkpatrick model with ferromagnetic interaction.
\newblock {\em arXiv preprint arXiv:2104.07629}, 2021.

\bibitem{kosterlitz1976spherical}
J.~M. Kosterlitz, D.~J. Thouless, and R.~C. Jones.
\newblock Spherical model of a spin-glass.
\newblock {\em Physical Review Letters}, 36(20):1217, 1976.

\bibitem{landon2022free}
B.~Landon.
\newblock Free energy fluctuations of the two-spin spherical {SK} model at
  critical temperature.
\newblock {\em Journal of Mathematical Physics}, 63(3):033301, 2022.

\bibitem{landon2022fluctuations}
B.~Landon and P.~Sosoe.
\newblock Fluctuations of the overlap at low temperature in the 2-spin
  spherical {SK} model.
\newblock {\em Annales de l’Institut Henri Poincar{\'e}-Probabilit{\'e}s et
  Statistiques}, 58(3):1426--1459, 2022.

\bibitem{lee2021higher}
J.~Lee.
\newblock Higher order fluctuations of extremal eigenvalues of sparse random
  matrices.
\newblock {\em arXiv preprint arXiv:2108.11634}, 2021.

\bibitem{lee2023spherical}
J.~O. Lee and Y.~Li.
\newblock Spherical {S}herrington--{K}irkpatrick model for deformed wigner
  matrix with fast decaying edges.
\newblock {\em Journal of Statistical Physics}, 190(2):35, 2023.

\bibitem{lee2018local}
J.~O. Lee and K.~Schnelli.
\newblock Local law and {T}racy--{W}idom limit for sparse random matrices.
\newblock {\em Probability Theory and Related Fields}, 171(1):543--616, 2018.

\bibitem{mezard2009information}
M.~Mezard and A.~Montanari.
\newblock {\em Information, physics, and computation}.
\newblock Oxford University Press, 2009.

\bibitem{Mohar1993EigenvaluesIC}
B.~Mohar and S.~Poljak.
\newblock Eigenvalues in combinatorial optimization.
\newblock 1993.

\bibitem{onatski2013asymptotic}
A.~Onatski, M.~J. Moreira, and M.~Hallin.
\newblock Asymptotic power of sphericty tests for high-dimensional data.
\newblock {\em The Annals of Statistics}, 41(3):1204--1231, 2013.

\bibitem{parisi1979infinite}
G.~Parisi.
\newblock Infinite number of order parameters for spin-glasses.
\newblock {\em Physical Review Letters}, 43(23):1754, 1979.

\bibitem{parisi1980order}
G.~Parisi.
\newblock The order parameter for spin glasses: a function on the interval 0-1.
\newblock {\em Journal of Physics A: Mathematical and General}, 13(3):1101,
  1980.

\bibitem{shcherbina2012central}
M.~Shcherbina and B.~Tirozzi.
\newblock Central limit theorem for fluctuations of linear eigenvalue
  statistics of large random graphs: Diluted regime.
\newblock {\em Journal of Mathematical Physics}, 53(4):043501, 2012.

\bibitem{sherrington1975solvable}
D.~Sherrington and S.~Kirkpatrick.
\newblock Solvable model of a spin-glass.
\newblock {\em Physical review letters}, 35(26):1792, 1975.

\bibitem{talagrand2006free}
M.~Talagrand.
\newblock Free energy of the spherical mean field model.
\newblock {\em Probability Theory and Related Fields}, 134(3):339--382, 2006.

\bibitem{talagrand2006parisi}
M.~Talagrand.
\newblock The {P}arisi formula.
\newblock {\em Annals of Mathematics}, pages 221--263, 2006.

\end{thebibliography}

\end{document}